\documentclass[leqno,final]{article}

\usepackage[dvips]{graphicx}
\usepackage{amsmath,amstext,amssymb,hyperref,amsthm}

\setlength{\textwidth}{167mm}
\setlength{\textheight}{217mm}
\setlength{\voffset}{-.2in}
\setlength{\oddsidemargin}{0.0cm}
\setlength{\evensidemargin}{0.0cm}

\pagestyle{myheadings}

\vfuzz2pt 
\hfuzz2pt 

\numberwithin{equation}{section}
\newcommand{\rem}{{\it Remark }}
\newtheorem{theorem}{Theorem}[section]
\newtheorem{lemma}{Lemma}[section]
\newtheorem{corollary}{Corollary}[section]
\newtheorem{definition}{Definition}[section]

\newcommand{\norm}[1]{\left\Vert#1\right\Vert}
\newcommand{\norml}[2]{\left\Vert#1\right\Vert_{L^2(#2)}}
\newcommand{\norme}[1]{\left\Vert{\hskip -2.7pt}\left\vert #1 \right\vert{\hskip -2.7pt}\right\Vert}

\newcommand{\abs}[1]{\left\vert#1\right\vert}
\newcommand{\pd}[1]{\left\langle #1\right\rangle}
\newcommand{\pdjj}[2]{\left\langle \left[#1\right], \left[#2\right]\right\rangle_e}

\newcommand{\set}[1]{\left\{#1\right\}}

\newcommand{\jm}[1]{\left[#1\right]}
\newcommand{\nfrac}[2]{\displaystyle{\genfrac{}{}{0pt}{}{#1}{#2}}}

\newcommand{\R}{\mathbb{R}}

\newcommand{\db}{\displaybreak[0]}
\newcommand{\nn}{\nonumber}
\newcommand{\ls}{\lesssim}

\newcommand{\be}{\beta}

\newcommand{\De}{\Delta}
\newcommand{\ep}{\varepsilon}
\newcommand{\ga}{\gamma}
\newcommand{\Ga}{\Gamma}
\newcommand{\la}{\lambda}

\newcommand{\na}{\nabla}
\newcommand{\om}{\omega}
\newcommand{\Om}{\Omega}
\newcommand{\pa}{\partial}
\newcommand{\pr}{\prime}

\newcommand{\ta}{\theta}
\newcommand{\vp}{\varphi}

\newcommand{\M}{\mathcal{M}}

\renewcommand{\i}{{\rm\mathbf i}}
\newcommand{\ue}{u_\mathcal{E}}
\newcommand{\ua}{u_\mathcal{A}}

\DeclareMathOperator{\re}{{Re}} 
\DeclareMathOperator{\im}{{Im}}

\newcommand{\p}{\partial}
\newcommand{\T}{\mathcal{T}}
\newcommand{\E}{\mathcal{E}}

\newcommand{\eq}[1]{\begin{align}#1\end{align}}
\newcommand{\eqn}[1]{\begin{align*}#1\end{align*}}

\title{Preasymptotic error analysis of \\higher order FEM and CIP-FEM for\\ Helmholtz equation with high wave number}
\markboth{Y. Du \& H. WU}{Preasymptotic error Analysis of higher order FEM and CIP-FEM}

\author{
Yu Du
\thanks{Department of Mathematics, Nanjing University, Jiangsu,
210093, P.R. China. ({\tt dynju@qq.com}). }
\and 
Haijun Wu
\thanks{Department of Mathematics, Nanjing University, Jiangsu,
210093, P.R. China. ({\tt hjw@nju.edu.cn}). The work of the second author was
partially supported by the National Magnetic Confinement Fusion Science Program under grant 2011GB105003 and by the NSF of China grants 11071116 and 91130004.}
}

\begin{document}
\date{}
\maketitle


\setcounter{page}{1}

\begin{abstract}
A preasymptotic error analysis of the finite element method (FEM) and some continuous interior penalty finite element method (CIP-FEM) for Helmholtz equation in two and three dimensions is proposed. $H^1$- and $L^2$- error estimates with explicit dependence on the wave number $k$ are derived. In particular, it is shown that if $k^{2p+1}h^{2p}$ is sufficiently small, then the pollution errors of both methods in $H^1$-norm are bounded by $O(k^{2p+1}h^{2p})$, which coincides with the phase error of the FEM obtained by existent dispersion analyses on Cartesian grids, where $h$ is the mesh size, $p$ is the order of the approximation space and is fixed. The CIP-FEM extends the classical one by adding more penalty terms on jumps of higher (up to $p$-th order) normal derivatives in order to reduce efficiently the pollution errors of higher order methods. Numerical tests are provided to verify the theoretical findings and to illustrate great capability of the CIP-FEM in reducing the pollution effect. 
\end{abstract}

{\bf Key words.} 
Helmholtz equation, large wave number, pollution errors, 
continuous interior penalty finite element methods, finite element methods

{\bf AMS subject classifications. }
65N12, 
65N15, 
65N30, 
78A40  

\setcounter{page}{1}


\section{Introduction}\label{sec-1} This paper is devoted to preasymptotic error estimates of some continuous interior penalty finite element method (CIP-FEM) and the finite element method (FEM) for the following Helmholtz problem:
\begin{align}
-\De u - k^2 u &=f  \qquad\mbox{in  } \Om,\label{eq1.1a}\\
\frac{\pa u}{\pa n} +\i k u &=g \qquad\mbox{on } \Ga,\label{eq1.1b}
\end{align}
where $\Om\subset \R^d,\, d=2,3$ is a bounded domain with smooth boundary,
$\Ga:=\pa\Om$, $\i=\sqrt{-1}$ denotes the imaginary unit, and $n$
denotes the unit outward normal
to $\pa\Om$. The above Helmholtz problem is an approximation of the
acoustic scattering problem (with time dependence $e^{\i\om t}$) and $k$ is known as the wave number. The Robin boundary condition \eqref{eq1.1b} is known as the
first order approximation of the radiation condition (cf. \cite{em79}).
We remark that the Helmholtz problem \eqref{eq1.1a}--\eqref{eq1.1b} also arises in applications
as a consequence of frequency domain treatment of attenuated scalar waves (cf. \cite{dss94}). 

It is well-known that the finite element method of fixed order for the Helmholtz problem \eqref{eq1.1a}--\eqref{eq1.1b} at high frequencies ($k\gg1$) is subject to the effect of pollution: the ratio of the error of the finite element solution to the error of the best approximation from the finite element space cannot be uniformly bounded with respect to $k$ \cite{Ainsworth04,bs00, bips95, cd03, dbb99,ey11b, fw09, fw11, gm, ghp09, harari97, ib95a,ib97, thompson06, w}. 
More precisely, given that the exact solution $u$ in a space $V$ with norm $\norm{\cdot}_V$ and the  finite element solution $u_h$ in a discrete space $V_h\subset V$, the pollution error may be defined as follows (cf. \cite{ihlenburg98,dgmz12}). Assume that an estimate of the following form holds:
\begin{align}\label{epollutiona}
\frac{\norm{u-u_h}_V}{\norm{u}_V}\le C(k)\inf_{v_h\in V_h}\frac{\norm{u-v_h}_V}{\norm{u}_V}\quad\text{ with }C(k)=C_1+C_2k^\be (kh)^\chi,
\end{align}
where $C_1, C_2, \be>0$, and  $\chi$ are independent of $k$ and the mesh size $h$. Then the finite element solution is said to be \emph{polluted} and the following term is called \emph{pollution error}:
\begin{equation}\label{epollutionb}
C_2k^\be (kh)^\chi\inf_{v_h\in V_h}\frac{\norm{u-v_h}_V}{\norm{u}_V}.
\end{equation}
Clearly, estimating the pollution error is significant both in theory and practice, and it has always been interesting to propose numerical methods which induce less pollution error and consequently, cheap methods \cite{bips95, cd03, dgmz12, fw09, fw11, ghp09,w, zw}. 
We recall that, the term ``asymptotic error estimate" refers to the error estimate without pollution error and the term ``preasymptotic error estimate" refers to the estimate with non-negligible  pollution effect.  

The highly indefinite nature of Helmholtz problem with high wave number makes the error analysis of the FEM very difficult. The standard duality argument (or Schatz argument)  (cf. \cite{ak79, dss94,sch74}) gives only asymptotic error estimates under the mesh condition that $k^2h$ is small enough, but it is too strict for large $k$. In 1990's, Ihlenburg and Babu\v{s}ka \cite{ib95a,ib97} considered the one dimensional problem discretized on equidistant grids, and proved preasymptotic error estimates under the condition that $kh\le C_0$ for some constant less than $\pi$. Based on a profound stability estimate of the exact solution by decomposing it into a nonoscillatory elliptic part and an oscillatory analytic part, and the standard duality argument,  Melenk and Sauter \cite{ms10,ms11} considered one and higher dimensional problems, and showed that the FEM (with fixed $p$) is pollution free under the condition that $k^{p+1}h^p$ is small enough. More recently, Zhu and Wu \cite{zw} gave the first preasymptotic error analysis for higher dimensional problems by combining the stability from \cite{ms10,ms11} and a new modified duality argument. It was shown that the pollution term in the $H^1$ error estimate is $O(k^{2p+1}h^{2p})$, which is exactly of the same order as the phase error obtained by dispersion analysis \cite{Ainsworth04,ib97}, under the mesh condition $k^{p+2}h^{p+1}$ is sufficiently small.  We remark that results on the $hp$ version of the FEM were also obtained in \cite{ib97, ms10,ms11, zw}.

One purpose of this paper is to prove the same preasymptotic error bound for the FEM with fixed order $p$ but under a weaker condition that $k^{2p+1}h^{2p}$ is sufficiently small. Note that this condition is quite practical since a useful numerical solution must has a sufficiently small pollution error which is also $O(k^{2p+1}h^{2p})$. In order to prove this preasymptotic error estimate, we first develope some discrete Sobolev theory on FE spaces. Then we decompose the error of the FE solution $u_h$ as $u-u_h=u-P_hu+P_hu-u_h$ where $P_h$ is an elliptic projection, and we bound $L^2$-norm of $P_hu-u_h$ by its high order discrete Sobolev norms in the duality argument step (instead of its $H^1$ norm as the standard Schatz argument). Note that our new estimates improves the previous results in the case of $p>1$ (cf. \cite{w,zw,ms11}).

 The CIP-FEM, which was first proposed by Douglas and Dupont \cite{dd76} for elliptic and parabolic problems in 1970's and then successfully applied to convection-dominated problems as a stabilization technique \cite{burman05,be07, bh04}, uses the same approximation space as the FEM but modifies the sesquilinear form of the FEM by adding a least squares term penalizing the jump of the normal derivative of the discrete solution at mesh interfaces. Recently the CIP-FEM has shown great potential in solving the Helmholtz problem \eqref{eq1.1a}--\eqref{eq1.1b} with large wave number \cite{w,zw,zbw}. It is absolute stable if the the penalty parameters are chosen as complex numbers with positive imaginary parts, it satisfies an error bound no larger than that of the FEM under the same mesh condition, its penalty parameters may be tuned to greatly reduce the pollution error, and so on. 
 
Another purpose of this paper is to generalize the CIP-FEM by penalizing jumps of higher normal derivatives of the discrete solution at mesh interfaces and to prove the same preasymptotic error estimate as that of the FEM. Note that for the linear case $p=1$, the CIP-FEM remains  unchanged. For higher order case $p>1$, we add more penalty terms on jumps of higher (up to $p$-th order) normal derivatives, because we found by dispersion analysis that the pollution error of the new CIP-FEM for one dimensional problem may be removed completely by choosing appropriate penalty parameters (see Section~\ref{sec-num}), while it is hard to do so for the classical CIP-FEM with only penalty terms on the jump of first order normal derivative.  We use such penalty parameters from one dimensional dispersion analysis to compute a model problem in two dimensions on Cartesian grids and find that the pollution effect is almost invisible for the wave number $k$ up to 1000 for the CIP-FEM with order $p=1, 2, 3$. For simplicity, our theoretical analysis for the CIP-FEM is restrict to the case of \emph{real} penalty parameters. The proofs are quite similar to those of the FEM, except the additional penalty terms should be carefully dealt with. For preasymptotic and asymptotic error analyses of other methods including discontinuous Galerkin methods and spectral methods, we refer to \cite[etc.]{clx13,dgmz12,fw09,fw11,mps13,dpg11,sw07}.   

The remainder of this paper is organized as follows. The CIP-FEM is introduced in Section~\ref{sec-2}. Some preliminary results, including the stability of the continuous solution, the approximation properties of the finite element space, and estimates of the elliptic projection and $L^2$ projection, are cited or proved in Section~\ref{sec-pre}.  In Section~\ref{sec-sob}, we introduce discrete Sobolev norms of arbitrary order by using the discrete elliptic operator and develop useful properties on the discrete Sobolev norms. Section~\ref{sec-fem}  is devoted to the  preasymptotic error analysis of FEM and Section~\ref{sec-cipfem} is devoted to CIP-FEM. In Section~\ref{sec-num}, we simulate a model problem in two dimensions on Cartesian grids by the FEM and CIP-FEM using the ``optimal" penalty parameters for one dimensional problem. The tests verify the theoretical findings and show that the pollution error of the CIP-FEM is much smaller than that of the FEM.

Throughout the paper, $C$ is used to denote a generic positive constant
which is independent of $h$,  $k$, $f$, $g$, and the penalty parameters. We also use the shorthand
notation $A\lesssim B$ and $B\gtrsim A$ for the
inequality $A\leq C B$ and $B\geq CA$. $A\eqsim B$ is a shorthand
notation for the statement $A\lesssim B$ and $B\lesssim A$. We assume that $k\gg 1$ since we are considering high-frequency problems. For the ease of presentation, we assume that $k$ is constant on $\Om$ and that $p=O(1)$ is fixed. We also assume that $\Om$ is a strictly star-shaped domain with an analytic boundary. Here ``strictly star-shaped" means that there exist a point $x_\Om\in\Om$ and a positive constant $c_\Om$ depending only on $\Om$ such that 
\begin{equation}\label{def1}
(x-x_\Om)\cdot n\ge c_\Om,\quad\forall x\in\pa\Om.
\end{equation}
%

\section{Formulations of FEM and CIP-FEM}\label{sec-2}
To formulate the two methods, we first introduce some notation. The standard space, norm and inner product notation
are adopted. Their definitions can be found in \cite{bs08,ciarlet78}.
In particular, $(\cdot,\cdot)_Q$ and $\pd{ \cdot,\cdot}_\Sigma$
for $\Sigma\subset \pa Q$ denote the $L^2$-inner product
on complex-valued $L^2(Q)$ and $L^2(\Sigma)$
spaces, respectively. Denote by $(\cdot,\cdot):=(\cdot,\cdot)_\Om$
and $\pd{ \cdot,\cdot}:=\pd{ \cdot,\cdot}_{\p\Om}$. For simplicity, denote by $\norm{\cdot}_j:=\norm{\cdot}_{H^j(\Om)}$ and $\abs{\cdot}_j:=\abs{\cdot}_{H^j(\Om)}$.

Let $\T_h$ be a curvilinear triangulation of $\Om$ (cf. \cite{ms10, ms11, Monk03}). For any $K\in \T_h$, we define $h_K:=\mbox{diam}(K)$. 
Similarly, for each edge/face $e$ of $K\in \T_h$, define $h_e:=\mbox{diam}(e)$. Let $h=\max_{K\in\T_h}h_K$. Assume that $h_K\eqsim h$. Denote by 
$\widehat K$ the reference element and by $F_K$ the element maps from $\widehat K$ to $K\in \T_h$.
Let $V_h$ be the approximation space of continuous piecewise mapped $p$-th order polynomials, that is,
$$V_h:=\set{v_h \in H^1(\Omega):\ v_h\mid_K\circ F_K \in \mathcal P_p(\widehat K), \forall K \in \T_h},$$
where $\mathcal P_p(\widehat K)$ denotes the set of all polynomials whose degrees do not exceed $p$ on $\widehat K$. 

We remark that the theoretical results of this paper also hold for finite element discretizations on curvilinear Cartesian meshes or isoparametric finite element approximations \cite{bs08}. 

\subsection{FEM} Introduce the following sesquilinear form
\eq{a(u,v)=(\na u,\na v), \quad\forall u,v\in H^1(\Om)\label{ea}.}
The variational problem to \eqref{eq1.1a}--\eqref{eq1.1b} reads as: Find $u\in H^1(\Om)$ such that
\eq{a(u,v) - k^2(u,v) +\i k \pd{ u,v}
=(f,v)+\pd{g, v},\qquad\forall v\in H^1(\Om).\label{evp}}
The FEM is defined by: Find $u_h\in V_h$ such that
\eq{a(u_h,v_h) - k^2(u_h,v_h) +\i k \pd{ u_h,v_h}
=(f,v_h)+\pd{g, v_h},\qquad\forall v_h\in V_h.\label{eFEM}}

The following norm on $H^1(\Om)$ is useful for the subsequent analysis:
\eq{\norme{v}:=&\big(\norm{\na v}_0^2+k^2\norm{v}_0^2\big)^{\frac12}\label{enorme}.}
Noting from the trace inequality that 
\eq{k\norml{v}{\Ga}^2\ls k\norm{v}_0\norm{v}_1\ls k^2\norm{v}_0+\norm{v}_1^2\ls \norme{v}^2,\label{etr}}
we have the following continuity estimate for the sesquilinear form of the FEM:
\eq{\abs{a(u,v) - k^2(u,v) +\i k \pd{ u,v}}&\ls \norme{u}\norme{v}, \quad\forall u,v\in H^1(\Om).\label{eac}}
\subsection{CIP-FEM} Let $\E_h^I$ be the set of all interior edges/faces of $\T_h$. For every $e=\p K\cap \p K^\pr\in\E_h^I$, let $n_e$ be a unit normal vector to $e$ and  define the jump $\jm{v}$ of $v$ on $e$ as $\jm{v}|_{e}:= v|_{K^\pr}-v|_{K}.$

We define the ``energy" space $V$ and the sesquilinear
form $a_\ga(\cdot,\cdot)$ on $V\times V$ as follows:
\begin{align}
V&:=H^1(\Om)\cap\prod_{K\in\T_h} H^{p+1}(K), \nonumber \\
\label{eah}
a_\ga(u,v)&:=a(u,v)+  J(u,v)\qquad\forall\, u, v\in V,\\
J(u,v)&:=\sum_{j=1}^p\sum_{e\in\E_h^{I}}\ga_{j,e}\, h_e^{2j-1} \pdjj{\frac{\pa^j u}{\pa n_e^j}}{\frac{\pa^j v}{\pa n_e^j}},
\label{eJ}\end{align}
where  $\ga_{j,e}, e\in\E_h^I$ are 
numbers with nonnegative imaginary parts to be specified latter. 
It is clear that $J(u,v)=0$ if $u\in H^{p+1}(\Om)$ and $v\in V$. Therefore, if $u\in H^{p+1}(\Om)$ is the solution of \eqref{eq1.1a}--\eqref{eq1.1b}, then 
\begin{equation*}
a_\ga(u,v) - k^2(u,v) +\i k \pd{ u,v}
=(f,v)+\pd{g, v},\qquad\forall v\in V.
\end{equation*}
Then the CIP-FEM is defined as follows: Find $u_h\in V_h$ such that
\begin{equation}\label{ecipfem}
a_\ga(u_h,v_h) - k^2(u_h,v_h) +\i k \pd{ u_h,v_h}
=(f, v_h)+\pd{g, v_h},  \qquad\forall v_h\in V_h.
\end{equation}

\rem 2.1.\ 
(a) The terms in $J(u,v)$ are so-called penalty terms.
The penalty parameters in $J(u,v)$ are $\ga_{j,e}$.  Clearly, if the parameters $\ga_{j,e}\equiv 0$, then the CIP-FEM becomes the standard FEM.

(b) Penalizing the jumps of normal derivatives 
 was used early by Douglas and Dupont \cite{dd76} for second order PDEs and by Babu{\v{s}}ka and Zl\'amal \cite{bz73} for fourth order PDEs in
the context of $C^0$ finite element methods, by Baker \cite{baker77} for fourth order PDEs
and by Arnold \cite{arnold82} for second order
parabolic PDEs in the context of IPDG methods. 

(c) Our CIP-FEM \eqref{ecipfem} extends the classical CIP-FEM \cite{dd76, burman05, w,zw} which penalizing only the jumps of the first normal derivatives. We consider such extension for scattering problems because  more penalty terms are helpful for reducing the pollution effects of higher order methods (see Section~\ref{sec-num}).

(d) The classical CIP-FEM was analyzed by Wu and Zhu in \cite{w,zw} for the Helmholtz problem \eqref{eq1.1a}--\eqref{eq1.1b} and proved to be absolute stable for penalty parameters with positive imaginary parts. Optimal order preasymptotic error estimates were also derived under the mesh condition that $k^{p+2}h^{p+1}$ is small enough. In this paper we will prove that the optimal order preasymptotic error estimates still hold, for the new CIP-FEM including the classical one, when $k^{2p+1}h^{2p}$ is sufficiently small.

(e) In this paper we consider the scattering problem with time dependence $e^{\i\om t}$, that is, the sign before $\i$ in \eqref{eq1.1b} is positive. If we consider the scattering problem with time dependence $e^{-\i\om t}$, that is, the sign before $\i$ in   \eqref{eq1.1b} is negative, then the penalty parameters should be complex numbers with  nonpositive imaginary parts.

We also need the following norms on the space $V$:
\begin{align}\label{e2.5}
\abs{v}_{1,\ga}:=&\bigg( \norm{\na v}_0^2+\sum_{j=1}^p\sum_{e\in\E_h^{I}}\abs{\ga_{j,e}}\, h_e^{2j-1}\norm{\left[\frac{\pa^j v}{\pa n_e^j}\right]}_{L^2(e)}^2\bigg)^{1/2},\\
\norm{v}_{1,\ga}:=&\big(\abs{v}_{1,\ga}^2+\norm{v}_0^2\big)^{1/2},\label{e2.5b}\\
 \norme{v}_\ga:=&\big(\abs{v}_{1,\ga}^2+k^2\norm{v}_0^2\big)^{\frac12}.\label{e2.5c}
\end{align}
Noting that the exact solution may not be in $V$, we introduce the following functions to measure the errors of discrete approximations. 
\begin{align}\label{e2.6a}
E_\ga(v,v_h):=&\bigg( \norm{v-v_h}_1^2+\sum_{j=1}^p\sum_{e\in\E_h^{I}}\abs{\ga_{j,e}}\, h_e^{2j-1}\norm{\left[\frac{\pa^j v_h}{\pa n_e^j}\right]}_{L^2(e)}^2\bigg)^{1/2},\\
\mathbb{E}_\ga(v,v_h):=&\big(E_\ga(v,v_h)^2+k^2\norm{v-v_h}_0^2\big)^{\frac12},\quad \forall v\in H^1(\Om), v_h\in V_h.\label{e2.6b}
\end{align}
Clearly, $E_\ga(v,v_h)=\norm{v-v_h}_{1,\ga}$ and $\mathbb{E}_\ga(v,v_h)=\norme{v-v_h}_\ga$ if $v\in H^{p+1}(\Om), v_h\in V_h$.

In the next sections, we shall consider the preasymptotic stability and
error analysis for the above FEM and the CIP-FEM. 

\section{Preliminary lemmas}\label{sec-pre} In this section, we first recall stability estimates of the continuous problem. Then we introduce approximation estimates of the discrete space $V_h$, in particular, the error estimates of the elliptic projection and $L^2$ projection in negative norms.  
\subsection{Stability estimates of the continuous problem}
The following lemma (cf. \cite[Theorem 4.10]{ms11}) says that the solution $u$ to the continuous problem \eqref{eq1.1a}--\eqref{eq1.1b} can be decomposed into the sum of an elliptic part and an analytic part $u=\ue+\ua$ where $\ue$ is usually non-smooth but the $H^2$-bound of $\ue$ is independent of $k$ and $\ua$ is oscillatory but the $H^j$-bound of $\ua$ is available for any integer $j\ge 0$.  
\begin{lemma}\label{depcomposition} Assume that $\Om$ is a strictly star-shaped domain with an analytic boundary. Suppose $f\in L^2(\Om)$ and $g\in H^{1/2}(\Ga)$.
Then the solution $u$ to the problem \eqref{eq1.1a}--\eqref{eq1.1b}
can be written as $u=\ue+\ua$, and satisfies
\begin{align}
 |\ue|_j&\ls k^{j-2}C_{f,g},\quad j=0,1,2,\label{u_E}\\
 \abs{u_{\mathcal{A}}}_j&\ls k^{j-1}C_{f,g},\quad\forall j\in \mathbb{N}_0. \label{u_A}
\end{align}
Here $C_{f,g}:=\|f\|_0+\|g\|_{H^{1/2}(\Ga)}$.
\end{lemma}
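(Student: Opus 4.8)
The plan is to prove this by the standard frequency-splitting technique of Melenk and Sauter, adapted to the curvilinear star-shaped setting. First I would reduce to the case of an impedance problem on a domain with analytic boundary where the data $f\in L^2(\Om)$ and $g\in H^{1/2}(\Ga)$ are given, and recall the basic a priori bound $\norme{u}\ls k\,C_{f,g}$, which follows from the strict star-shapedness hypothesis \eqref{def1} via a Rellich-type (Morawetz) multiplier identity testing the equation against $(x-x_\Om)\cdot\na u$ and $u$. This gives the baseline $\abs{u}_1\ls k C_{f,g}$ and $k\norm{u}_0\ls k C_{f,g}$, i.e., the $j=1$ case of \eqref{u_A} and the endpoint of the argument we must refine.

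Next I would introduce the splitting at the level of the data (or equivalently at the level of the solution using the frequency content): write $u=\ue+\ua$ where $\ua$ solves the Helmholtz problem with a high-frequency-filtered right-hand side (roughly, the part of $f$ and $g$ with Fourier content $\gtrsim k$ localized via a partition of unity and a pseudodifferential cutoff, extended analytically off $\Om$ using the analyticity of $\Ga$), and $\ue$ solves the problem with the complementary low-frequency data. The key structural point, inherited from \cite{ms11}, is:

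\begin{itemize}
\item[(i)] For $\ue$: the low-frequency data makes the Helmholtz operator behave like a coercive perturbation of $-\De$, so elliptic regularity on the smooth domain gives $\abs{\ue}_2\ls C_{f,g}$ with a $k$-independent constant, and then $\abs{\ue}_1\ls k^{-1}C_{f,g}$, $\norm{\ue}_0\ls k^{-2}C_{f,g}$ follow by interpolation/Poincaré-type bounds — this is \eqref{u_E}.
\item[(ii)] For $\ua$: because the data is high-frequency and analytically extendable, one proves by an induction on the derivative order (a Cauchy-estimate/analyticity argument on nested neighborhoods, combined at each step with the baseline $H^1$ bound applied to tangential derivatives of $u$) that $\abs{\ua}_j\ls k^{j-1}C_{f,g}$ for every $j\in\mathbb N_0$ — this is \eqref{u_A}. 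The gain of exactly one power of $k$ per derivative is the hallmark of the oscillatory part.
\end{itemize}

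The main obstacle — and the reason the cited proof is long — is step (ii): controlling all derivatives of $\ua$ uniformly in $k$ with the sharp $k^{j-1}$ scaling. This requires (a) a quantitative analytic regularity theory for the Helmholtz equation near the analytic boundary $\Ga$ (Morrey–Nirenberg-type nested-ball estimates with explicit constants in $j$ and $k$), (b) carefully tracking that the frequency cutoff used to define $\ua$ does not destroy the data bound $C_{f,g}=\norm{f}_0+\norm{g}_{H^{1/2}(\Ga)}$, and (c) handling the curvilinear triangulation only implicitly — in fact Lemma~\ref{depcomposition} is purely about the continuous solution, so the mesh plays no role here. Since the statement is quoted verbatim from \cite[Theorem 4.10]{ms11}, in the paper itself I would simply cite that reference; the sketch above is what the full argument would entail.
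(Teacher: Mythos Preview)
You are right that the paper gives no proof here: Lemma~\ref{depcomposition} is simply quoted from \cite[Theorem~4.10]{ms11}, and your closing sentence already reflects that. In that sense your proposal matches the paper.

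However, your sketch of the underlying Melenk--Sauter mechanism has the frequency roles reversed, and this is worth correcting because the reversal makes both heuristics in (i) and (ii) false. In the actual construction, the data $f$ (and, after suitable reduction, $g$) is split by a Fourier cutoff at level $\eqsim k$: the \emph{high}-frequency piece $H_kf$ feeds the elliptic part $\ue$, and the \emph{low}-frequency piece $L_kf$ feeds the analytic part $\ua$. The coercivity in (i) comes precisely from the fact that on frequencies $|\xi|\gtrsim \lambda k$ with $\lambda>1$ the symbol $|\xi|^2-k^2$ is uniformly positive, so $-\Delta-k^2$ acts like an elliptic operator and one obtains $\|\ue\|_2\ls C_{f,g}$ with a $k$-independent constant; low-frequency data would give no such coercivity. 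Conversely, in (ii) the analyticity of $\ua$ is inherited from the fact that $L_kf$ has compactly supported Fourier transform (radius $\eqsim k$) and is therefore entire with derivative growth exactly $k^j$; high-frequency data is not analytic in this sense. Your claim that $|\ue|_1\ls k^{-1}C_{f,g}$ and $\|\ue\|_0\ls k^{-2}C_{f,g}$ ``follow by interpolation/Poincar\'e-type bounds'' is also not the mechanism: these come directly from the same coercivity estimate, since the high-frequency localization forces $\|\ue\|_0\ls k^{-2}\|\Delta\ue\|_0$. Finally, the full argument in \cite{ms11} also involves a contraction/iteration of the splitting (the remainder at each step is fed back into the operator), not only induction on the derivative order; you may want to mention this if you expand the sketch.
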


\rem 3.1.\ 
It was shown earlier that (see \cite{cf06,hetmaniuk07, melenk95b}) 
$$k^2\norm{u}_0+k\norm{u}_1+\norm{u}_2\lesssim k \big(\|f\|_0+\|g\|_{L^2(\Ga)}\big)+\|g\|_{H^{1/2}(\Ga)}.$$ 

\begin{lemma}\label{lstau} Assume that $\Om$ is a strictly star-shaped domain with an analytic boundary. Suppose $s\ge 2$ and $f\in H^{s-2}(\Om)$ and $g\in H^{s-3/2}(\Ga)$.
Then the solution $u$ to the problem \eqref{eq1.1a}--\eqref{eq1.1b}
 satisfies the following stability estimate.
\begin{align}
 \norm{u}_s&\ls k^{s-1}C_{s-2,f,g},\label{estau}
\end{align}
where $C_{s-2,f,g}:=\|f\|_0+\|g\|_{L^2(\Ga)}+\sum_{j=0}^{s-2}k^{-(j+1)}\big(\|f\|_j+\|g\|_{H^{j+1/2}(\Ga)}\big)$. 
\end{lemma}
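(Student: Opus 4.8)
The plan is to derive the higher-order stability estimate \eqref{estau} from the decomposition $u=\ue+\ua$ of Lemma~\ref{depcomposition} by bootstrapping elliptic regularity against the Helmholtz equation, rather than proving it from scratch. The starting observation is that $u$ solves $-\De u = f + k^2 u$ in $\Om$ with the Robin condition $\pa_n u + \i k u = g$ on $\Ga$. Since $\Om$ has analytic (hence $C^\infty$) boundary, the standard elliptic shift theorem for the Robin problem gives, for each integer $m\ge 2$,
\begin{equation*}
\norm{u}_m \ls \norm{f+k^2u}_{m-2} + \norm{g}_{H^{m-3/2}(\Ga)} + \norm{u}_0 \ls \norm{f}_{m-2} + k^2\norm{u}_{m-2} + \norm{g}_{H^{m-3/2}(\Ga)},
\end{equation*}
where the last (low-order) term $\norm{u}_0$ is absorbed since $k^2\norm{u}_{m-2}$ dominates it for $m\ge 2$ (and $k\gg1$). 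This is the recursion I would iterate.

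First I would establish the base case $s=2$: this is essentially Remark~3.1, namely $k^2\norm{u}_0 + k\norm{u}_1 + \norm{u}_2 \ls k(\|f\|_0+\|g\|_{L^2(\Ga)}) + \|g\|_{H^{1/2}(\Ga)}$, which one also reads off directly from Lemma~\ref{depcomposition} by the triangle inequality: $\norm{u}_2 \le |\ue|_2 + |\ua|_2 \ls C_{f,g} + k C_{f,g} \ls k C_{f,g}$, matching \eqref{estau} with $s=2$ since $C_{0,f,g}=\|f\|_0+\|g\|_{L^2(\Ga)}\eqsim C_{f,g}$ (using $\|g\|_{H^{1/2}(\Ga)}\ls k\|g\|_{L^2(\Ga)}+k^{-1}\|g\|_{H^{1/2}(\Ga)}$ after including the $j=0$ term of $C_{s-2,f,g}$). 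Then I would induct on $s$. Assuming \eqref{estau} holds with $s$ replaced by $s-1$, the elliptic recursion above with $m=s$ gives
\begin{equation*}
\norm{u}_s \ls \norm{f}_{s-2} + \norm{g}_{H^{s-3/2}(\Ga)} + k^2\norm{u}_{s-2}.
\end{equation*}
For $s\ge3$ the inductive hypothesis bounds $k^2\norm{u}_{s-2}$ (apply it at level $s-1$, which controls $\norm{u}_{s-2}$ as well, or apply it at level $s-2$ directly when $s-2\ge2$; the case $s=3$ uses the $s=2$ base case) by $k^2\cdot k^{s-3}C_{s-3,f,g} = k^{s-1}C_{s-3,f,g}$. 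Since $C_{s-3,f,g}\le C_{s-2,f,g}$ and the two added terms $\norm{f}_{s-2}, \norm{g}_{H^{s-3/2}(\Ga)}$ are exactly $k^{s-1}$ times the new $j=s-2$ summand $k^{-(s-1)}(\|f\|_{s-2}+\|g\|_{H^{s-3/2}(\Ga)})$ in the definition of $C_{s-2,f,g}$, everything collects into $k^{s-1}C_{s-2,f,g}$, closing the induction. One should also note the estimate for non-integer $s$ (if desired) follows by interpolation between consecutive integers, since the right-hand side of \eqref{estau} is, up to the power of $k$, an interpolation-stable combination of Sobolev norms of $f$ and $g$.

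The main obstacle is not the bootstrapping, which is routine, but making sure the elliptic regularity constant for the \emph{Robin} problem is genuinely $k$-independent: the coefficient $\i k$ in the boundary condition must not degrade the shift estimate. The clean way around this is to not use the $\i k u$ term at all in the elliptic estimate — write the boundary data as $\tilde g := g - \i k u \in H^{m-3/2}(\Ga)$ and treat it as a pure Neumann problem for $-\De$, so that $\norm{u}_m \ls \norm{f+k^2u}_{m-2} + \norm{\tilde g}_{H^{m-3/2}(\Ga)} + \norm{u}_0$ with a constant depending only on $\Om$. The trace theorem then gives $\norm{\tilde g}_{H^{m-3/2}(\Ga)} \ls \norm{g}_{H^{m-3/2}(\Ga)} + k\norm{u}_{m-1}$, and the new term $k\norm{u}_{m-1}$ is again controlled by the inductive hypothesis at a lower level, costing only a harmless power of $k$ that is reabsorbed into the bookkeeping above. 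With that adjustment the whole argument is a finite induction of length $s-2$, each step costing one application of fixed-boundary elliptic regularity plus the trace inequality.
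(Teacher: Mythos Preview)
Your proposal is correct and follows essentially the same route as the paper: induction on $s$ with the $s=2$ base case from Remark~3.1, rewriting \eqref{eq1.1a}--\eqref{eq1.1b} as a Poisson problem with Neumann data $g-\i k u$ so that the elliptic regularity constant is $k$-independent, and then absorbing the resulting terms $k^2\norm{u}_{s-2}$ and $k\norm{u}_{s-1}$ via the inductive hypothesis. The paper's version differs only cosmetically (it shifts to $-\De u+u=(k^2+1)u+f$ rather than $-\De u=k^2u+f$) and does not discuss non-integer $s$.
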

\begin{proof} We prove this lemma by induction. From Remark 3.1, \eqref{estau} holds for $s=2$. Next we suppose that
 \begin{align}
 \norm{u}_l&\ls k^{l-1}C_{l-2,f,g}, \quad 2\le l\le s-1.\label{estauj}
\end{align}
Note that the continuous problem \eqref{eq1.1a}--\eqref{eq1.1b} can be rewritten as
\begin{align*}
-\De u + u &=(k^2+1)u+f  \quad\mbox{in  } \Om,\qquad\frac{\pa u}{\pa n} = -\i k u +g \quad\mbox{on } \Ga.
\end{align*}
The standard regularity estimate for Poisson equation with Neumann boundary condition \cite{gt01} and the trace inequality imply that
\begin{align*}
\norm{u}_s&\ls \norm{(k^2+1)u+f}_{s-2}+\norm{-\i k u +g}_{H^{s-3/2}(\Ga)} \\
&\ls k^2\norm{u}_{s-2} + \norm{f}_{s-2} + k\norm{u}_{s-1} +\norm{g}_{H^{s-3/2}(\Ga)} \\
&\ls k^{s-1}(\|f\|_0+\|g\|_{L^2(\Ga)}\big) + \sum_{j=0}^{s-2}k^{s-j-2}\big(\|f\|_j+\|g\|_{H^{j+1/2}(\Ga)}\big) \\
&\ls k^{s-1}C_{s-2,f,g}.
\end{align*}
Then the proof is completed by induction.
\end{proof}

\subsection{Approximation properties} In this subsection we consider to approximate the solution $u$ to the problem \eqref{eq1.1a}--\eqref{eq1.1b} by finite element functions in $V_h$. 

The following result is well-known.
\begin{lemma}\label{lapprox1}
Let $1\le s\le p+1$. Suppose $u \in H^s(\Omega)$. Then there exists $\hat{u}_h \in V_h$ such that
\begin{align}\label{elapprox1a}
\norm{u-\hat{u}_h}_0+h\norm{u-\hat{u}_h}_1&\lesssim  h^s|u|_s.
\end{align}
\end{lemma}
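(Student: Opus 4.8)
The statement is the classical finite element approximation estimate, so the plan is to exhibit $\hat u_h$ as the image of $u$ under a quasi-interpolation operator. Since $s$ may equal $1$, nodal interpolation is unavailable in dimensions $d=2,3$, so I would use a Scott--Zhang (or Cl\'ement) type operator $I_h\colon H^1(\Om)\to V_h$ built from local $L^2$-projections onto facewise polynomials; this is well-defined on $H^1$, reproduces $V_h$, and is locally $L^2$- and $H^1$-stable, with each $I_hu|_K$ depending only on $u$ restricted to a small patch $\om_K$ of elements touching $K$. One then sets $\hat u_h:=I_hu$ and estimates $u-I_hu$ element by element.

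The core is a scaled local estimate. On the reference element $\widehat K$, the Bramble--Hilbert lemma (cf.\ \cite{bs08,ciarlet78}) provides, for any $\hat v\in H^s(\widehat K)$, a polynomial $q\in\mathcal P_p(\widehat K)$ with $\norm{\hat v-q}_{H^m(\widehat K)}\ls |\hat v|_{H^s(\widehat K)}$ for $0\le m\le s\le p+1$. I would transfer this to a physical element $K\in\T_h$ via the element map $F_K$: because $h_K\eqsim h$ and $\Om$ has an analytic boundary, the curvilinear maps can be chosen (as in the mesh constructions of \cite{ms10,ms11,Monk03}) so that $F_K$ is a uniformly controlled perturbation of an affine map, with $\norm{DF_K}\ls h$, $\norm{DF_K^{-1}}\ls h^{-1}$, and all higher derivatives of $F_K$ bounded; the usual change-of-variables and chain-rule bookkeeping then yields $|\hat v|_{H^m(\widehat K)}\eqsim h^{m-d/2}|v|_{H^m(K)}$ uniformly in $K$, where $v:=\hat v\circ F_K^{-1}$. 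Combining with Bramble--Hilbert and the local stability and polynomial reproduction of $I_h$ on the patch $\om_K$ gives
\eqn{\norm{u-I_hu}_{H^m(K)}\ls h^{s-m}\,|u|_{H^s(\om_K)},\qquad m=0,1.}

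Finally I would square these local bounds, sum over $K\in\T_h$, and use the finite-overlap property of the patches $\{\om_K\}$ (a consequence of shape regularity $h_K\eqsim h$) to obtain $\norm{u-I_hu}_0^2+h^2\norm{u-I_hu}_1^2\ls h^{2s}|u|_s^2$, which is the claim. The step I expect to be the main obstacle is the uniform control of the curvilinear element maps $F_K$ near $\Ga$: for affine-equivalent interior elements the scaling equivalences are textbook, but in the boundary strip one must show that the nonaffine remainder in $F_K$ is genuinely lower order in $h$ and that its derivatives do not spoil the constants, which is precisely where one leans on the specific triangulation of the analytic domain from \cite{ms10,ms11,Monk03} rather than on a generic curved mesh.
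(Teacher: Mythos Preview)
Your proposal is correct and follows essentially the same route as the paper: the paper's proof is a one-line citation, taking $\hat u_h$ to be the standard Lagrange interpolant when $s\ge 2$ and the Scott--Zhang interpolant when $s=1$ (cf.\ \cite{bs08}), which is exactly the quasi-interpolation strategy you spell out in detail. Using Scott--Zhang for all $s\ge 1$ as you do is of course also fine, and your attention to the curvilinear element maps near $\Gamma$ is the right place to be careful.
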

\begin{proof} $\hat{u}_h$ may be chosen as the standard Lagrange interplant if $s\ge 2$ and as the Scott-Zhang interpolant otherwise  (cf. \cite{bs08}).
\end{proof}

If $u$ is the exact solution satisfying the decomposition $u=\ue+\ua$ as in Lemma~\ref{depcomposition}, then we may approximate $u$ by $\hat u_h=\widehat\ue_h+\widehat\ua_h$ to show the following estimate (cf. \cite{ms10,ms11}).
 \begin{lemma}\label{error2}
Let $u$ be the solution to the problem \eqref{eq1.1a}-\eqref{eq1.1b}. Suppose $f\in L^2(\Om)$ and $g\in H^{1/2}(\Ga)$. Then
there exists $\hat{u}_h \in V_h$ such that
\begin{align}
\|u-\hat{u}_h\|_0+h\norme{u-\hat{u}_h}&\lesssim \big(h^2+h(kh)^p\big) C_{f,g}, \label{u_err0}
\end{align}
where $C_{f,g}$ are defined in Lemmas~\ref{depcomposition}.
\end{lemma}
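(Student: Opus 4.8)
The plan is to approximate $u$ by splitting it according to the decomposition $u=\ue+\ua$ from Lemma~\ref{depcomposition} and approximating each piece separately. First I would invoke Lemma~\ref{lapprox1} with $s=2$ applied to $\ue$: since $\ue\in H^2(\Om)$, there is $\widehat\ue_h\in V_h$ with $\norm{\ue-\widehat\ue_h}_0+h\norm{\ue-\widehat\ue_h}_1\ls h^2|\ue|_2\ls h^2 C_{f,g}$, using the bound $|\ue|_2\ls k^0 C_{f,g}=C_{f,g}$ from \eqref{u_E}. For the energy norm, note $\norme{\ue-\widehat\ue_h}^2=\norm{\na(\ue-\widehat\ue_h)}_0^2+k^2\norm{\ue-\widehat\ue_h}_0^2\ls (h|\ue|_2)^2+k^2(h^2|\ue|_2)^2\ls h^2(1+k^2h^2)C_{f,g}^2\ls h^2 C_{f,g}^2$ (the factor $k^2h^2$ is harmless since $kh$ is bounded in the preasymptotic regime, or more simply it is dominated by the $(kh)^p$ term with $p\ge1$); thus $h\norme{\ue-\widehat\ue_h}\ls h^2 C_{f,g}$.

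Next I would apply Lemma~\ref{lapprox1} to $\ua$ with $s=p+1$: since $\ua$ is analytic, $\ua\in H^{p+1}(\Om)$ and there is $\widehat\ua_h\in V_h$ with $\norm{\ua-\widehat\ua_h}_0+h\norm{\ua-\widehat\ua_h}_1\ls h^{p+1}|\ua|_{p+1}\ls h^{p+1}k^p C_{f,g}=h(kh)^p C_{f,g}$, using \eqref{u_A} with $j=p+1$, namely $|\ua|_{p+1}\ls k^p C_{f,g}$. For the energy norm I need both terms of $\norme{\ua-\widehat\ua_h}$: the gradient term contributes $\norm{\na(\ua-\widehat\ua_h)}_0\ls h^p|\ua|_{p+1}\ls (kh)^p k^0\cdot k^{?}$ — more carefully, $h^p|\ua|_{p+1}\ls h^p k^p C_{f,g}=(kh)^p C_{f,g}$, and the zeroth-order term contributes $k\norm{\ua-\widehat\ua_h}_0\ls k h^{p+1}|\ua|_{p+1}\ls k h^{p+1} k^p C_{f,g}=(kh)^p (kh) C_{f,g}/k\cdot k=\ldots$; cleanly, $kh^{p+1}k^p C_{f,g}=(kh)^{p+1}C_{f,g}/k\cdot k$, i.e.\ $=h(kh)^p\cdot kh\cdot C_{f,g}$? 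Let me just record $k\,h^{p+1}k^p=h\,(kh)^p\cdot k$, which is not right dimensionally; instead $k h^{p+1}k^p=h^p k^p\cdot kh=(kh)^p(kh)$, so $k\norm{\ua-\widehat\ua_h}_0\ls (kh)^{p}(kh)C_{f,g}\ls (kh)^p C_{f,g}$ since $kh\ls1$. Hence $\norme{\ua-\widehat\ua_h}\ls (kh)^p C_{f,g}$ and $h\norme{\ua-\widehat\ua_h}\ls h(kh)^p C_{f,g}$.

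Setting $\hat u_h:=\widehat\ue_h+\widehat\ua_h\in V_h$ and combining by the triangle inequality gives
\[
\norm{u-\hat u_h}_0+h\norme{u-\hat u_h}\ls \norm{\ue-\widehat\ue_h}_0+h\norme{\ue-\widehat\ue_h}+\norm{\ua-\widehat\ua_h}_0+h\norme{\ua-\widehat\ua_h}\ls \big(h^2+h(kh)^p\big)C_{f,g},
\]
which is the claimed estimate; here I absorbed $\norm{\ua-\widehat\ua_h}_0\ls h^{p+1}k^pC_{f,g}=h(kh)^pC_{f,g}$ into the second term and similarly for $\norm{\ue-\widehat\ue_h}_0$ into the first.

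The main obstacle, such as it is, is bookkeeping the powers of $k$ correctly: one must use the $k$-explicit bounds $|\ue|_2\ls C_{f,g}$ and $|\ua|_{p+1}\ls k^pC_{f,g}$ exactly as stated, and one must be careful that interpolation of $\ua$ is done at the top Sobolev order $s=p+1$ (not higher — $V_h$ only reproduces degree-$p$ polynomials) so that the gradient term scales like $h^p|\ua|_{p+1}$ rather than anything worse, and that every stray factor of $kh$ arising in the energy norm is controlled under the standing assumption that $kh$ is bounded. There is also a minor technical point that on a curvilinear triangulation the interpolation estimate of Lemma~\ref{lapprox1} is stated for mapped elements, but that is already packaged into that lemma, so no extra work is needed here.
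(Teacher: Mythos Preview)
Your proposal is correct and follows exactly the approach the paper indicates: decompose $u=\ue+\ua$ via Lemma~\ref{depcomposition}, approximate $\ue$ at order $s=2$ and $\ua$ at order $s=p+1$ using Lemma~\ref{lapprox1}, and sum. The paper itself only sketches this in one sentence (referring to \cite{ms10,ms11}), so your write-up simply fills in the bookkeeping; the messy arithmetic around $k\norm{\ua-\widehat\ua_h}_0$ does resolve to $(kh)^{p+1}C_{f,g}\ls(kh)^pC_{f,g}$ under $kh\ls1$, as you eventually record.
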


Define the elliptic projection $P_h$ as follows.
\eq{a(P_h\vp, v_h)+(P_h\vp,v_h)=a(\vp, v_h)+(\vp,v_h), \quad\forall v_h\in V_h,\label{ePh}}
where $a$ is defined in \eqref{ea}.
Then we have the following error estimates in $H^1$, $L^2$, and negative norms \cite{bs08}.
\begin{lemma}\label{lPh} For any $-1\le j\le p-1$ and $\vp\in H^1(\Om)$,
\eqn{\norm{\vp-P_h\vp}_{-j}\ls h^{j+1}\inf_{\vp_h\in V_h}\norm{\vp-\vp_h}_1.}
\end{lemma}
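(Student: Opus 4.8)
Observe first that $P_h$ is the Ritz projection associated with the symmetric, coercive, bounded bilinear form $b(u,v):=a(u,v)+(u,v)=(\na u,\na v)+(u,v)$, which is precisely the $H^1(\Om)$ inner product; in particular $b(v,v)=\norm{v}_1^2$ and $\abs{b(u,v)}\le\norm{u}_1\norm{v}_1$. The defining relation \eqref{ePh} is the Galerkin orthogonality $b(\vp-P_h\vp,v_h)=0$ for all $v_h\in V_h$. The case $j=-1$ (the $H^1$ estimate) then follows immediately from C\'ea's lemma: writing $e:=\vp-P_h\vp$ and using any $\vp_h\in V_h$, $\norm{e}_1^2=b(e,e)=b(e,\vp-\vp_h)\le\norm{e}_1\norm{\vp-\vp_h}_1$, hence $\norm{e}_1\le\inf_{\vp_h\in V_h}\norm{\vp-\vp_h}_1$. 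This also provides the bound on $\norm{e}_1$ that will feed the duality argument below.

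For $0\le j\le p-1$ I would run an Aubin--Nitsche duality argument. By definition of the negative norm it suffices to estimate $(e,\psi)$ for arbitrary $\psi\in H^j(\Om)$ (resp.\ $C_0^\infty(\Om)$) with $\norm{\psi}_j=1$. Let $w\in H^1(\Om)$ solve the adjoint problem $b(v,w)=(v,\psi)$ for all $v\in H^1(\Om)$; since $b$ is symmetric this is the weak form of $-\De w+w=\psi$ in $\Om$ with $\pa w/\pa n=0$ on $\Ga$. Because $\Om$ has an analytic (in particular $C^\infty$) boundary, the standard elliptic regularity estimate for the Neumann problem gives $w\in H^{j+2}(\Om)$ with $\norm{w}_{j+2}\ls\norm{\psi}_j$. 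The crucial index check is $j+2\le p+1$, which is exactly the hypothesis $j\le p-1$, so Lemma~\ref{lapprox1} applies with $s=j+2$ and produces $\widehat w_h\in V_h$ with $\norm{w-\widehat w_h}_1\ls h^{j+1}\abs{w}_{j+2}\ls h^{j+1}\norm{\psi}_j$.

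Combining these ingredients with Galerkin orthogonality: choosing $v=e$ in the adjoint problem and subtracting $b(e,\widehat w_h)=0$,
\begin{align*}
(e,\psi)=b(e,w)=b(e,w-\widehat w_h)\le\norm{e}_1\,\norm{w-\widehat w_h}_1\ls h^{j+1}\norm{e}_1\,\norm{\psi}_j.
\end{align*}
Taking the supremum over $\psi$ yields $\norm{e}_{-j}\ls h^{j+1}\norm{e}_1$, and invoking the $j=-1$ bound on $\norm{e}_1$ from the first paragraph gives $\norm{\vp-P_h\vp}_{-j}\ls h^{j+1}\inf_{\vp_h\in V_h}\norm{\vp-\vp_h}_1$, as claimed.

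There is no genuine obstacle here: the argument is the textbook duality/superapproximation scheme (cf.\ \cite{bs08}). The only points requiring care are (i) verifying that the smoothness of $\pa\Om$ delivers the full $H^{j+2}$ regularity of the adjoint solution for every admissible $j$, and (ii) matching the polynomial degree constraint, namely that $j\le p-1$ is precisely what allows Lemma~\ref{lapprox1} to be used at order $s=j+2$; both are already available in the hypotheses and the preceding lemmas.
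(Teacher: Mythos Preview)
Your proposal is correct and is precisely the standard Aubin--Nitsche duality argument; the paper does not actually write out a proof of this lemma but simply cites \cite{bs08}, where the same argument appears. Your write-up is therefore consistent with (indeed, fills in) what the paper invokes.
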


Similarly, for the $L^2$ projection $Q_h$ defined by 
\[(Q_h\vp,v_h)=(\vp,v_h),\quad\forall v_h\in V_h,\]
we have the following lemma.
\begin{lemma}\label{lQh} For any $0\le j\le p+1$,
\eqn{\norm{\vp-Q_h\vp}_{-j}\ls h^{j}\inf_{\vp_h\in V_h}\norm{\vp-\vp_h}_0.}
\end{lemma}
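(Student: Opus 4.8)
The statement to prove is Lemma~\ref{lQh}: the negative-norm error estimate for the $L^2$ projection, $\norm{\vp-Q_h\vp}_{-j}\ls h^{j}\inf_{\vp_h\in V_h}\norm{\vp-\vp_h}_0$ for $0\le j\le p+1$.

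My plan is the standard Aubin--Nitsche duality argument applied in negative-norm form. For $j=0$ the claim is immediate from the fact that $Q_h$ is the $L^2$-orthogonal projection: $\norm{\vp-Q_h\vp}_0=\inf_{\vp_h\in V_h}\norm{\vp-\vp_h}_0$ directly (indeed with constant $1$), so I would dispatch that case first and then assume $1\le j\le p+1$. For the negative norm, recall $\norm{w}_{-j}=\sup_{0\ne\psi\in H^j(\Om)}\frac{(w,\psi)}{\norm{\psi}_j}$. Fix $\psi\in H^j(\Om)$ with $\norm{\psi}_j=1$. Since $\vp-Q_h\vp$ is $L^2$-orthogonal to $V_h$, for any $\psi_h\in V_h$ we have $(\vp-Q_h\vp,\psi)=(\vp-Q_h\vp,\psi-\psi_h)\le\norm{\vp-Q_h\vp}_0\,\norm{\psi-\psi_h}_0$. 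Now choose $\psi_h\in V_h$ to be a good approximant of $\psi$: since $1\le j\le p+1$ and $\psi\in H^j(\Om)$, Lemma~\ref{lapprox1} (with $s=j$) gives $\norm{\psi-\psi_h}_0\ls h^j\abs{\psi}_j\le h^j\norm{\psi}_j=h^j$. Combining, $(\vp-Q_h\vp,\psi)\ls h^j\norm{\vp-Q_h\vp}_0$, and taking the supremum over $\psi$ yields $\norm{\vp-Q_h\vp}_{-j}\ls h^j\norm{\vp-Q_h\vp}_0=h^j\inf_{\vp_h\in V_h}\norm{\vp-\vp_h}_0$, which is exactly the claim.

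The only points that need a little care are: (i) the endpoint $j=p+1$ requires Lemma~\ref{lapprox1} at its largest admissible regularity $s=p+1$, which is allowed; (ii) for $j=1$ one uses the Scott--Zhang interpolant branch of Lemma~\ref{lapprox1} since we only have $\psi\in H^1$, so $\psi_h$ is well-defined and stable; and (iii) one should note that the generic constant absorbs the approximation constant from Lemma~\ref{lapprox1}, consistent with the paper's convention that $C$ is independent of $h$ and $k$ (here there is no $k$-dependence at all, $\vp$ being arbitrary in $H^1$).

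There is no real obstacle here — this is a textbook duality argument and the heavy lifting has already been done in Lemma~\ref{lapprox1}. If I had to name the ``hard'' part, it is merely bookkeeping: making sure the index ranges in Lemma~\ref{lapprox1} ($1\le s\le p+1$) line up with the claimed range of $j$ in Lemma~\ref{lQh} ($0\le j\le p+1$), with the $j=0$ case handled separately by orthogonality and the cases $1\le j\le p+1$ handled by duality with $s=j$.
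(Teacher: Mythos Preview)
Your proposal is correct and follows essentially the same duality argument as the paper's own proof: write $(\vp-Q_h\vp,\psi)=(\vp-Q_h\vp,\psi-\psi_h)$ by $L^2$-orthogonality, then invoke Lemma~\ref{lapprox1} to bound $\norm{\psi-\psi_h}_0\lesssim h^j\norm{\psi}_j$. Your explicit treatment of the endpoint cases (separating $j=0$, and noting the Scott--Zhang interpolant for $j=1$) is in fact slightly more careful than the paper, which applies Lemma~\ref{lapprox1} uniformly without singling these out.
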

\begin{proof} For any $v\in H^j(\Om)$, from Lemma~\ref{lapprox1}, there exists $\hat v_h\in V_h$ such that
\[\norm{v-\hat v_h}_0\ls h^j\norm{v}_j.\]
Then
\eqn{(\vp-Q_h\vp,v)=(\vp-Q_h\vp,v-\hat v_h)\ls\norm{\vp-Q_h\vp}_0 h^j\norm{v}_j\ls h^j\inf_{\vp_h\in V_h}\norm{\vp-\vp_h}_0\norm{v}_j,}
which completes the proof of the lemma.
\end{proof}

\section{Discrete elliptic operator and discrete Sobolev norms}\label{sec-sob} Noting that a discrete function in $V_h$ is usually not in $H^2$, its high order Sobolev norms may not exist. In this section we introduce discrete Sobolev norms of arbitrary order and discuss relationships between the discrete and standard Sobolev norms. 

Define $A_h: V_h\mapsto V_h$ by
\eq{(A_hv_h,w_h)=a(v_h,w_h)+(v_h,w_h),\quad\forall v_h,w_h\in V_h.\label{eAh}}
Note that $A_h$ is a discrete version of the elliptic operator $A:=-\De +I$ from $D(A):=\set{v\in H^2(\Om):\, \frac{\pa v}{\pa n}=0\text{ on }\pa\Om}$ to $L^2(\Om)$. Clearly,
\eq{(Av,w)=a(v,w)+(v,w),\quad\forall v\in D(A),w\in H^1(\Om).\label{eA}}
Denote the eigenvalues of the operators $A$ and $A_h$ by 
\[\la_1<\la_2<\cdots, \quad\text{and}\quad \la_{1h}<\la_{2h}<\cdots<\la_{\dim(V_h)h}, \quad\text{respectively.}\]
Clearly, the eigenvalues are positive and the corresponding eigenfunctions  denoted by
\[\phi_1,\phi_2,\cdots,\quad\text{and}\quad \phi_{1h},\phi_{2h},\cdots,\phi_{\dim(V_h)h},\]
 form 
 orthogonal bases of the spaces $L^2(\Om)$ and $V_h$, respectively.
 For any real number $j$ we define $A^j$ and $A_h^{j}$ as follows.
\eq{\text{For }v=\sum_{m=1}^\infty a_m\phi_m, &\text{ let } A^jv=\sum_{m=1}^\infty\la_m^ja_m\phi_m;\label{eAj}\\
\text{For }v_h=\sum_{m=1}^{\dim(V_h)}a_m\phi_{mh}, &\text{ let } A_h^jv_h=\sum_{m=1}^{\dim(V_h)}\la_{mh}^ja_m\phi_{mh}.\label{eAhj}} 

 Define the following norm on $D(A^{j/2})$, the domain of the operator $A^{j/2}$:
\eq{\norm{v}_{j*}:=\norm{A^{j/2}v}_0,\quad\forall v\in D(A^{j/2}).\label{enormj}}
Then the definition of $A$ and
the  shift estimates for elliptic differential equations \cite{gt01} show that for any (fixed) integer $j\ge0$,
\eq{\norm{v}_{j}\eqsim \norm{v}_{j*},\quad\forall v\in D(A^{j/2}).\label{enormAj}}
Clearly, $D(A^{j/2}) \subseteq H^j(\Om) \text{ if } j\ge 0$. Note that, for $j=-m<0$, 
\eqn{\norm{v}_j&=\sup_{w\in H^m(\Om)}\frac{(v,w)}{\norm{w}_m}\ge \sup_{w\in D(A^{m/2})}\frac{(v,w)}{\norm{w}_m}\gtrsim\sup_{w\in  D(A^{m/2})}\frac{(A^{-m/2}v,A^{m/2}w)}{\norm{A^{m/2}w}_0}=\norm{v}_{j*}.} 
We have, for any integer $j<0$,    
\eq{ \norm{v}_{j*}\ls\norm{v}_{j},\quad\forall v\in H^j(\Om).\label{enormAj-}}

Introduce the following discrete $H^j$ norms on $V_h$ for any integer $j$:
\eq{\norm{v_h}_{j,h}:=\norm{A_h^{j/2}v_h}_0.\label{enormh}}
It is clear that
\eq{\norm{v_h}_{0,h}=\norm{v_h}_0=\norm{v_h}_{0*},\quad\norm{v_h}_{1,h}=\norm{v_h}_1=\norm{v_h}_{1*}, \quad\forall v_h\in V_h.\label{enormh2}}

The following lemma gives some inverse estimates for discrete functions.
\begin{lemma}\label{lAhinv}For any integer $j$,
\eqn{\norm{v_h}_{j,h}\ls h^{-1}\norm{v_h}_{j-1,h},\quad\forall v_h\in V_h.}
\end{lemma}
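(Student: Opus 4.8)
The plan is to prove the inverse estimate $\norm{v_h}_{j,h}\ls h^{-1}\norm{v_h}_{j-1,h}$ by exploiting the spectral definition of $A_h^{j/2}$ together with a single ``base case'' inverse inequality on $V_h$. First I would establish the case $j=2$, i.e.\ $\norm{A_h v_h}_0\ls h^{-1}\norm{v_h}_{1,h}=h^{-1}\norm{v_h}_1$. By \eqref{eAh}, for any $w_h\in V_h$ we have $(A_hv_h,w_h)=a(v_h,w_h)+(v_h,w_h)=(\na v_h,\na w_h)+(v_h,w_h)\ls \norm{v_h}_1\norm{w_h}_1$; choosing $w_h=A_hv_h$ and using the standard inverse inequality $\norm{w_h}_1\ls h^{-1}\norm{w_h}_0$ (valid on $V_h$ under the shape-regularity assumption $h_K\eqsim h$) gives $\norm{A_hv_h}_0^2\ls \norm{v_h}_1\, h^{-1}\norm{A_hv_h}_0$, hence $\norm{A_hv_h}_0\ls h^{-1}\norm{v_h}_1$, which is exactly the claim for $j=2$ in view of \eqref{enormh} and \eqref{enormh2}.

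Next I would upgrade this to the operator statement that the spectral radius of $A_h$ is $\ls h^{-2}$: writing $v_h=\sum_m a_m\phi_{mh}$, the case $j=2$ applied to each eigenfunction (or directly to $v_h$) yields $\sum_m\la_{mh}^2 a_m^2=\norm{A_hv_h}_0^2\ls h^{-2}\norm{v_h}_1^2$, and testing with $v_h=\phi_{mh}$ shows $\la_{mh}^2\ls h^{-2}\la_{mh}$ (since $\norm{\phi_{mh}}_1^2=a(\phi_{mh},\phi_{mh})+(\phi_{mh},\phi_{mh})=\la_{mh}\norm{\phi_{mh}}_0^2$ when the eigenfunctions are normalized in $L^2$), i.e.\ $\la_{mh}\ls h^{-2}$ for every $m$. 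With this uniform eigenvalue bound in hand, the general case is immediate: for arbitrary integer $j$ and $v_h=\sum_m a_m\phi_{mh}$,
\eqn{\norm{v_h}_{j,h}^2=\sum_m\la_{mh}^{j}a_m^2=\sum_m\la_{mh}\,\la_{mh}^{j-1}a_m^2\ls h^{-2}\sum_m\la_{mh}^{j-1}a_m^2=h^{-2}\norm{v_h}_{j-1,h}^2,}
which gives the desired inequality after taking square roots.

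The only mildly delicate point is the base case $j=2$, and within it the use of the inverse inequality $\norm{w_h}_1\ls h^{-1}\norm{w_h}_0$ on $V_h$; this is standard for mapped polynomial spaces on a shape-regular (quasi-uniform, since $h_K\eqsim h$) curvilinear triangulation, and follows by transforming to the reference element $\widehat K$ via $F_K$, using equivalence of norms on the finite-dimensional space $\mathcal P_p(\widehat K)$, and the bounds on $F_K$ and its Jacobian. Everything else is a routine diagonalization in the eigenbasis of $A_h$; the main obstacle is simply to make sure the passage from the $j=2$ norm estimate to the uniform bound $\la_{mh}\ls h^{-2}$ is stated cleanly, after which the induction (or direct spectral argument) over all integers $j$ costs nothing.
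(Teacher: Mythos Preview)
Your proposal is correct and rests on the same two ingredients as the paper's proof --- the spectral definition \eqref{enormh} of $\norm{\cdot}_{j,h}$ and the standard inverse inequality $\norm{w_h}_1\ls h^{-1}\norm{w_h}_0$ on $V_h$. The paper's argument is slightly more direct: it simply observes that $\norm{v_h}_{j,h}=\norm{A_h^{(j-1)/2}v_h}_{1,h}$ and $\norm{v_h}_{j-1,h}=\norm{A_h^{(j-1)/2}v_h}_{0,h}$, so the general $j$ reduces at once to the case $j=1$, whereas you take a small detour through the case $j=2$ to extract the eigenvalue bound $\la_{mh}\ls h^{-2}$ (which in fact already follows from the $j=1$ inverse inequality alone, since $\la_{mh}=\norm{\phi_{mh}}_1^2/\norm{\phi_{mh}}_0^2$).
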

\begin{proof}
From the definition of the discrete norm $\norm{\cdot}_{j,h}$ (see \eqref{enormh}), it suffices to prove the inverse estimate in discrete norm for $j=1$ which follows from \eqref{enormh2} and the inverse inequality in standard Sobolev norm:
\[\norm{v_h}_{1,h}=\norm{v_h}_1\ls h^{-1}\norm{v_h}_0=h^{-1}\norm{v_h}_{0,h}.\]
The proof is completed.
\end{proof}


The following lemma gives a relationship between the non-positive  discrete norms and standard  norms  of discrete functions.
\begin{lemma}\label{lAhnorm} For any integer $0\le j\le p+1$, we have 
\eq{\norm{v_h}_{-j,h}&\ls \sum_{m=0}^{j}h^{j-m}\norm{v_h}_{-m}, \quad\forall v_h\in V_h.\label{enormh-ja}
  }
\end{lemma}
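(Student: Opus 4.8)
\textbf{Proof proposal for Lemma~\ref{lAhnorm}.}
The plan is to proceed by induction on $j$, with the base case $j=0$ being trivial since $\norm{v_h}_{0,h}=\norm{v_h}_0$. For the inductive step, suppose the estimate holds for $j-1$ with $1\le j\le p+1$; I would estimate $\norm{v_h}_{-j,h}$ by dualizing: since $\norm{v_h}_{-j,h}=\norm{A_h^{-j/2}v_h}_0$, write this as a supremum over $w_h\in V_h$ of $(A_h^{-j/2}v_h,w_h)/\norm{w_h}_0$, and then rewrite $(A_h^{-j/2}v_h,w_h)=(A_h^{-j}v_h,A_h^{j/2}w_h)$. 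The key is to bound this pairing by comparing $A_h^{-1}$ with the continuous solution operator $A^{-1}$ on the range of $A_h^{-(j-1)}v_h$, which is where the regularity/approximation theory enters.

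More concretely, I would use the identity $A_h^{-1}=Q_h A^{-1}$ restricted appropriately — that is, for $g_h\in V_h$, $A_h^{-1}g_h\in V_h$ solves $a(A_h^{-1}g_h,v_h)+(A_h^{-1}g_h,v_h)=(g_h,v_h)$, which by \eqref{ePh} means $A_h^{-1}g_h=P_h(A^{-1}g_h)$ where $A^{-1}g_h\in D(A)$ solves the continuous problem. Then I would write $\norm{A_h^{-1}g_h}_0\le \norm{A^{-1}g_h}_0+\norm{(I-P_h)A^{-1}g_h}_0$; the first term is $\norm{g_h}_{-2*}\ls\norm{g_h}_{-2}$ by \eqref{enormAj-}, \eqref{enormAj}, and the second is controlled by Lemma~\ref{lPh}, which gives $\norm{(I-P_h)\vp}_{-m}\ls h^{m+1}\norm{\vp}_1$ for $-1\le m\le p-1$. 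Iterating this relation $A_h^{-1}=P_h A^{-1}$ a total of, roughly, $\lceil j/2\rceil$ times (handling the half-integer power by one application of $A_h^{-1/2}$, estimated via $\norm{A_h^{-1/2}g_h}_0^2=(A_h^{-1}g_h,g_h)$) converts discrete negative powers into continuous ones at the cost of error terms, each of which is a negative standard norm multiplied by an appropriate power of $h$; collecting all such terms produces the sum $\sum_{m=0}^j h^{j-m}\norm{v_h}_{-m}$ on the right-hand side. The restriction $j\le p+1$ is exactly what is needed so that all invocations of Lemma~\ref{lPh} (and possibly Lemma~\ref{lQh}) stay within their stated range of validity.

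The main obstacle I anticipate is bookkeeping the powers of $h$ correctly and handling the odd-versus-even parity of $j$ cleanly: $A_h^{j/2}$ for odd $j$ is not simply a power of $A_h$, so the pairing argument must be organized so that $A_h^{1/2}$ factors are always paired off symmetrically (using $(A_h^{1/2}u_h,A_h^{1/2}v_h)=a(u_h,v_h)+(u_h,v_h)$) rather than applied alone to a discrete function. A clean way to avoid parity trouble entirely is to prove the estimate first for the squared quantity, i.e. bound $\norm{v_h}_{-j,h}^2=(A_h^{-j}v_h,v_h)$, for which $A_h^{-j}$ is an honest iterated inverse, and only at the end take square roots; the inductive step then reduces to one comparison $A_h^{-1}=P_hA^{-1}$ together with the elliptic projection error bound, and Lemma~\ref{lAhinv} can absorb any leftover half-power discrepancies. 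A secondary technical point is ensuring the hidden constants remain independent of $k$ and $h$: this is fine because $A$ and $A_h$ are defined via the $k$-free form $a(\cdot,\cdot)+(\cdot,\cdot)$, so none of the $k$-dependence of the Helmholtz problem enters here.
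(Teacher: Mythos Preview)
Your proposal is correct and takes essentially the same approach as the paper: both use $A_h^{-1}g_h = P_h(A^{-1}g_h)$, estimate the projection error via Lemma~\ref{lPh}, iterate roughly $\lceil j/2\rceil$ times through the continuous negative norms $\norm{\cdot}_{-m*}$, and handle odd $j$ via $\norm{A_h^{-1/2}g_h}_0^2 = (A_h^{-1}g_h, g_h)\le\norm{g_h}_{-1*}^2$. The paper's organizing device is the single recursion estimate $\norm{A_h^{-1}w_h}_{-m*} \ls \norm{w_h}_{-(m+2)*} + h^{m+2}\norm{w_h}_0$ for $-1\le m\le p-1$, which makes the bookkeeping you worry about straightforward; note that the recursion lives in the $*$-norms rather than being an induction on $j$ in the discrete norms as your first paragraph suggests.
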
 
\begin{proof} 
From  \eqref{enormAj-} and \eqref{enormh2}, it suffices to show that 
\eq{\norm{v_h}_{-j,h}&\ls \sum_{m=0}^{j}h^{j-m}\norm{v_h}_{-m*}, \quad\forall v_h\in V_h.\label{enormh-j}}
 Let $z_h=A_h^{-1}v_h$ and $z=A^{-1}v_h$. We have
\eq{a(z_h,w_h)+(z_h,w_h)&=(v_h,w_h),\quad w_h\in V_h,\label{ezh}\\
a(z,w)+(z,w)&=(v_h,w),\quad w\in H^1(\Om). \label{ez}}
From Lemma~\ref{lPh}, Lemma~\ref{lapprox1}, and \eqref{enormAj-}, we have for any $-1\le m\le p-1$,
\eqn{
\norm{z-z_h}_{-m*}&\ls\norm{z-z_h}_{-m}\ls h^{m+1}\inf_{\vp_h\in V_h}\norm{z-\vp_h}_1\ls h^{m+2}\norm{z}_2\ls h^{m+2}\norm{v_h}_0.}
Therefore, for $-1\le m\le p-1$,
\eq{\norm{A_h^{-1}v_h}_{-m*}\ls \norm{A^{-1}v_h}_{-m*}+h^{m+2}\norm{v_h}_0
\ls \norm{v_h}_{-(m+2)*}+h^{m+2}\norm{v_h}_0.\label{elnormh-ja}}

If $j=2l\le p+1$ is even, then by recursive use of \eqref{elnormh-ja} we have
\eqn{\norm{v_h}_{-j,h}=&\norm{A_h^{-l}v_h}_0=\norm{A_h^{-1}(A_h^{-l+1}v_h)}_0\\
\ls &\norm{A_h^{-l+1}v_h}_{-2*}+h^2\norm{A_h^{-l+1}v_h}_0\\
\ls& \norm{A_h^{-l+2}v_h}_{-4*} +h^2 \norm{A_h^{-l+2}v_h}_{-2*}+h^4\norm{A_h^{-l+2}v_h}_0\\
\ls&\cdots\cdots\\
\ls& \norm{v_h}_{-j*}+h^2\norm{v_h}_{-(j-2)*}+\cdots+h^j\norm{v_h}_0.}
That is, \eqref{enormh-j} holds for $j=2l$. 

Next we consider the case that $j=2l+1\le p+1$ is even. Noting that $\norm{z_h}_1=\norm{P_hz}_1\le\norm{z}_1$, from \eqref{enormh2}, we conclude that
\eqn{\norm{A_h^{-1/2}v_h}_0^2=(v_h,A_h^{-1}v_h)=(A_hz_h,z_h)=\norm{z_h}_{1}^2\le\norm{z}_{1}^2=\norm{z}_{1*}^2 = \norm{v_h}_{-1*}^2.}
Therefore,
\eqn{\norm{v_h}_{-j,h}=&\norm{A_h^{-l-1/2}v_h}_0=\norm{A_h^{-1/2}(A_h^{-l}v_h)}_0\ls\norm{A_h^{-l}v_h}_{-1*}.}
Again a recursive use of \eqref{elnormh-ja} implies that \eqref{enormh-j} holds.
%
This completes the proof of the lemma. 
\end{proof}

\rem 4.1. 
 {\rm (a)} It would be of independent interest to investigate further properties of discrete Sobolev norms defined as above, such as, embedding inequalities, trace inequalities, and so on. Here we list merely the useful properties  for the analysis of the paper.  

{\rm (b)} There have been some other theories of discrete Sobolev spaces in the literature, for example, the theories applied to finite difference methods \cite{bus83,chou91,ls95} and the theories applied to discontinuous Galerkin methods \cite{be08,de10,fln_arx}.

\section{Preasymptotic error analysis of FEM}\label{sec-fem} 
One crucial step in asymptotic error analyses  of FEM for scattering problems is performing the 
 duality argument (or Aubin-Nitsche trick) (cf. \cite{ak79, dss94,ib97,ms10,ms11,sch74}). This argument is usually used to estimate the $L^2$-error of the finite element solution by its $H^1$-error. Based on the standard duality argument, the stability estimate in Remark~2.1 leads to asymptotic error estimate only under the condition that $k^2h$ is small enough, while the stability of Melenk and Sauter \cite{ms10,ms11} (cf. Lemma~\ref{depcomposition}) leads to pollution-free estimates under the condition that $k^{p+1}h^p$ is sufficiently small instead.  In \cite{zw}, Zu and Wu develop a modified duality argument which uses some special designed elliptic projections in the duality-argument step so that we can bound the $L^2$-error of the discrete solution by using the errors of the elliptic projections of the exact solution $u$ and obtain the first preasymptotic error estimates for the FEM in higher dimensions under the condition that $k^{p+2}h^{p+1}$ is sufficiently small. In this section, we modify the duality argument further by decomposing the error $u-u_h$ into a sum of $\rho:=u-P_hu$ and $\ta_h:=P_hu-u_h$ and bounding $L^2$-norm of $\ta_h$ by its high order discrete Sobolev norms in the duality argument step, so that we can derive optimal order preasymptotic error estimates under the condition that $k^{2p+1}h^{2p}$ is sufficiently small. This improves the previous results in the case of $p>1$.
 
\begin{theorem}\label{thm-err-1}
Let $u$ and $u_h$ denote the solutions to \eqref{eq1.1a}--\eqref{eq1.1b} and \eqref{eFEM}, respectively. Then there exists a constant $C_0$ independent of $k$ and $h$, such that if 
\begin{equation}\label{econd1}
k(kh)^{2p}\le C_0, 
\end{equation}
then the following error estimates hold:
\begin{align}\label{error-eh1}
\norme{u-u_h}&\lesssim \big(1+k(kh)^p\big)\inf_{z_h \in V_h}\norme{u-z_h},\\
\norm{u-u_h}_0&\lesssim \big(h+(kh)^p\big)\inf_{z_h \in V_h}\norme{u-z_h}.\label{error-eh2}
\end{align}
\end{theorem}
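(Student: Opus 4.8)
The plan is to follow the modified duality (Schatz-type) argument outlined in the introduction, keeping careful track of powers of $k$. Write $u - u_h = \rho + \ta_h$ with $\rho := u - P_h u$ and $\ta_h := P_h u - u_h \in V_h$, where $P_h$ is the elliptic projection of \eqref{ePh}. By Galerkin orthogonality for the FEM \eqref{eFEM}, subtracted against the variational problem \eqref{evp}, the error satisfies $a(u-u_h,v_h) - k^2(u-u_h,v_h) + \i k\pd{u-u_h,v_h} = 0$ for all $v_h\in V_h$. Testing with $v_h = \ta_h$ and using the definition of $P_h$ to rewrite $a(\ta_h,\ta_h)$, one obtains an identity of the schematic form
\begin{equation*}
\norme{\ta_h}^2 = (1+k^2)\norm{\ta_h}_0^2 - \i k\pd{\ta_h,\ta_h} + \big(\text{terms involving }\rho\big),
\end{equation*}
so that, taking real and imaginary parts and using the continuity bound \eqref{eac} together with the trace inequality \eqref{etr}, the ``G\r{a}rding-type'' estimate gives $\norme{\ta_h}^2 \lesssim k^2\norm{\ta_h}_0^2 + \norme{\rho}^2 + (\text{lower order})$. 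Hence everything reduces to estimating $k\norm{\ta_h}_0$ by a term that can be absorbed, plus $\norme{\rho}$.

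The heart of the argument is the duality step for $\norm{\ta_h}_0$. Rather than bounding it by $\norme{\ta_h}$ directly (which costs a factor $k$ and forces the stronger mesh condition $k^{p+2}h^{p+1}\ll 1$), I would introduce the dual problem $-\Delta w - k^2 w = \ta_h$ in $\Omega$, $\pa_n w - \i k w = 0$ on $\Gamma$ (note the sign flip for the adjoint), apply the stability decomposition of Lemma~\ref{depcomposition} to $w$ with data $f = \ta_h$, $g = 0$ — giving $w = w_\E + w_\A$ with $|w_\E|_j \lesssim k^{j-2}\norm{\ta_h}_0$ and $|w_\A|_j \lesssim k^{j-1}\norm{\ta_h}_0$ for all $j$ — and then write $\norm{\ta_h}_0^2 = a(\ta_h,w) - k^2(\ta_h,w) + \i k\pd{\ta_h,w}$. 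Using Galerkin orthogonality I can replace $w$ by $w - w_h$ for any $w_h\in V_h$, and split: the elliptic part $w_\E$ is approximated by $\hat w_{\E,h}$ to order $h^2$ (Lemma~\ref{lapprox1}), contributing $\lesssim h^2\norme{\ta_h}\,\|\ta_h\|_0 + \cdots$; the analytic part $w_\A$ is the dangerous one. For $w_\A$ I would use the negative-norm/elliptic-projection machinery: instead of approximating by interpolation, pair $\ta_h$ against $w_\A$ through the elliptic projection error estimates of Lemma~\ref{lPh} and, crucially, bound the resulting $L^2$-pairing $(\ta_h, \text{something})$ by the discrete Sobolev norms of Section~\ref{sec-sob} via Lemma~\ref{lAhnorm}, converting $\norm{\ta_h}_{-j,h}$ into $\sum_m h^{j-m}\norm{\ta_h}_{-m}$ and then estimating the negative norms of $\ta_h$ by duality again (or directly by the Galerkin identity). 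Iterating this $p$ times against the $H^{j}$-bounds $|w_\A|_j\lesssim k^{j-1}\norm{\ta_h}_0$ should produce
\begin{equation*}
\norm{\ta_h}_0 \lesssim \big(h + (kh)^p\big)\norme{\rho} + k(kh)^{2p}\norm{\ta_h}_0 + \text{(absorbable lower-order terms)}.
\end{equation*}

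Under the mesh condition \eqref{econd1}, $k(kh)^{2p}\le C_0$ with $C_0$ small enough lets us absorb the $k(kh)^{2p}\norm{\ta_h}_0$ term, yielding $\norm{\ta_h}_0 \lesssim (h+(kh)^p)\norme{\rho}$, hence $k\norm{\ta_h}_0 \lesssim (kh + (kh)^{p+1})\norme{\rho}\lesssim \norme{\rho}$ (the bracket is $O(1)$ under \eqref{econd1}), and feeding this back into the G\r{a}rding estimate gives $\norme{\ta_h}\lesssim \norme{\rho} + k\norm{\ta_h}_0\lesssim \norme{\rho}$. Wait — to get the stated factor $(1+k(kh)^p)$ in \eqref{error-eh1} I should be more careful: the G\r{a}rding bound actually gives $\norme{\ta_h}\lesssim \norme{\rho} + k\norm{\ta_h}_0$, and the sharpened duality estimate gives $k\norm{\ta_h}_0\lesssim (1 + k(kh)^p)\norme{\rho}$ before absorption, so $\norme{u-u_h}\le\norme{\rho}+\norme{\ta_h}\lesssim (1+k(kh)^p)\norme{\rho}$. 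Finally, since $\rho = u - P_h u$ is a near-best approximation in the energy norm — by \eqref{eac}, \eqref{ePh}, and a standard argument, $\norme{\rho}\lesssim \inf_{z_h}\norme{u - z_h}$ (one checks $\norm{\na\rho}_0 \le \inf\norm{\na(u-z_h)}_0$ and controls $k\norm{\rho}_0$ by $k\cdot h\cdot\norm{\na\rho}_0$ via Lemma~\ref{lPh} with $j=0$, which is $O(1)$ under the mesh condition) — we obtain \eqref{error-eh1}, and \eqref{error-eh2} follows by combining $\norm{u-u_h}_0\le\norm{\rho}_0 + \norm{\ta_h}_0$ with the just-proved $L^2$ bound on $\ta_h$ and $\norm{\rho}_0\lesssim h\norme{\rho}\lesssim h\inf_{z_h}\norme{u-z_h}$.

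I expect the main obstacle to be the iterated negative-norm bookkeeping in the duality step: making the discrete Sobolev norm estimates of Lemma~\ref{lAhnorm} interact correctly with the elliptic projection in such a way that each of the $p$ iterations picks up a clean factor of $kh$ (and no spurious positive powers of $k$), so that the total gain is exactly $(kh)^p$ relative to $\norme{\rho}$ and the residual self-referential term is precisely $k(kh)^{2p}\norm{\ta_h}_0$ — this is what pins down the mesh condition \eqref{econd1} as the weakest possible and is the technical novelty over \cite{zw}. Keeping the constants independent of $k$ throughout the recursion, and correctly handling the boundary/trace contributions from the $\i k\pd{\cdot,\cdot}$ term at each stage, will require care.
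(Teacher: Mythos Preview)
Your high-level plan is right --- decompose $u-u_h=\rho+\ta_h$ with $\rho=u-P_hu$, and close a self-referential estimate on $\norm{\ta_h}_0$ via a duality argument enhanced by the discrete Sobolev norms --- but the direction in which you deploy the discrete Sobolev pairing is inverted, and because of that you are missing the mechanism that actually makes the loop close.

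The paper does \emph{not} place a negative discrete norm on $\ta_h$, and it does \emph{not} iterate the duality argument. In the single duality step it writes the dangerous term as $(k^2+1)(\ta_h,w-P_hw)=(k^2+1)(\ta_h,Q_hw-P_hw)$ and bounds it by $\norm{\ta_h}_{p-1,h}\,\norm{Q_hw-P_hw}_{1-p,h}$: the \emph{positive} discrete norm sits on $\ta_h$, the negative one on the projection error. Lemma~\ref{lAhnorm} is applied to $Q_hw-P_hw$ (not to $\ta_h$), and together with Lemmas~\ref{lPh}--\ref{lQh} this gives $\norm{Q_hw-P_hw}_{1-p,h}\ls h^p\big(h+(kh)^p\big)\norm{\ta_h}_0$, yielding
\[
\norm{\ta_h}_0\ls \big(h+(kh)^p\big)\norme{\rho}+k^2h^p\big(h+(kh)^p\big)\norm{\ta_h}_{p-1,h}.
\]
The ingredient you are missing is how $\norm{\ta_h}_{p-1,h}$ is controlled back by $\norm{\ta_h}_0$. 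This is the paper's Step~2: rewrite the error equation as $(A_h\ta_h,v_h)=(k^2+1)(\ta_h+Q_h\rho,v_h)-\i k\pd{\ta_h+\rho,v_h}$, test with $v_h=A_h^{m-1}\ta_h$, and obtain the discrete-regularity recursion $\norm{\ta_h}_{m,h}\ls k\norm{\ta_h}_{m-1,h}+h^{1-m}\norme{\rho}$, hence $\norm{\ta_h}_{p-1,h}\ls k^{p-1}\norm{\ta_h}_0+h^{2-p}\norme{\rho}$. Substituting this into the duality estimate is precisely what produces the absorbable term $k(kh)^{2p}\norm{\ta_h}_0$. (A preliminary Step~1 --- taking imaginary parts in the error equation to control $\norml{\ta_h}{\Ga}$ by $\norm{\ta_h}_{p-1,h}$ --- is needed so that the boundary terms in Step~2 do not spoil the recursion.) Your alternative, putting $\norm{\ta_h}_{-j,h}$ on the left and ``estimating the negative norms of $\ta_h$ by duality again'', has no analogous closing step: Lemma~\ref{lAhnorm} converts $\norm{\ta_h}_{-j,h}$ into continuous negative norms of $\ta_h$, but those are only trivially bounded by $\norm{\ta_h}_0$ (gaining nothing), and a fresh Helmholtz duality for each $\norm{\ta_h}_{-m}$ reintroduces the same structure with the same loss, so the iteration does not converge to the stated factor $k(kh)^{2p}$.
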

\begin{proof} Suppose $kh\ls 1$. Let $P_hu$ be the elliptic projection of $u$ defined as \eqref{ePh} and let $$e_h:=u-u_h=(u-P_hu)+(P_hu-u_h):=\rho+\ta_h.$$ From Lemma~\ref{lPh}, $\rho$ may be bounded as follows:
\eq{\norm{\rho}_{-j}\ls h^{j+1}\norme{\rho}\ls h^{j+1}\inf_{z_h \in V_h}\norme{u-z_h},\quad 0\le j\le p-1.\label{erho}}
It remains to estimate $\ta_h$.
From \eqref{evp} and \eqref{eFEM} we have the following Galerkin orthogonality,
\begin{equation*}
a(e_h,v_h)-k^2(e_h,v_h)+\i k\langle e_h,v_h \rangle=0,\quad\forall v_h\in V_h.
\end{equation*}
Therefore from \eqref{ePh},
\begin{equation}\label{ecipfemorth}
a(\ta_h,v_h)-k^2(\ta_h,v_h)+\i k\pd{\ta_h,v_h}=(k^2+1)(\rho,v_h)-\i k\pd{\rho,v_h},\quad\forall v_h\in V_h.
\end{equation}

\emph{Step 1.} In this step, we bound $\norml{\ta_h}{\Ga}$ by the $(p-1)$-th order discrete norm of $\ta_h$. Let $v_h=\ta_h$ in \eqref{ecipfemorth} and take the imaginary part of the result equation to obtain
\eqn{k\norml{\ta_h}{\Ga}^2&=\im \big((k^2+1)(Q_h\rho,\ta_h)\big)-\re\big(k\pd{\rho,\ta_h}\big)\\
&\le(k^2+1)\norm{Q_h\rho}_{1-p,h}\norm{\ta_h}_{p-1,h}+k\norml{\rho}{\Ga}\norml{\ta_h}{\Ga}.}
From Lemmas~\ref{lPh} and \ref{lQh} with $\vp_h=P_hu$, we have
\eqn{\norm{Q_h\rho}_{1-p,h}=\norm{Q_hu-u+u-P_hu}_{1-p,h}\ls h^p\norme{\rho}.}
On the other hand, noting that $\norm{\rho}_{L^2(\Ga)}\ls\norm{\rho}_0^{1/2}\norm{\rho}_1^{1/2}\ls h^{1/2}\norme{\rho}$, it follows from the Young's inequality that
\eqn{k\norml{\rho}{\Ga}\norml{\ta_h}{\Ga}&\le\frac{k}{2}\norml{\rho}{\Ga}^2+\frac{k}{2}\norml{\ta_h}{\Ga}^2
\le Ckh\norme{\rho}^2+\frac{k}{2}\norml{\ta_h}{\Ga}^2.}
By combining the above three estimates we obtain
\eq{\label{etaGa}\norml{\ta_h}{\Ga}^2\ls kh^p\norme{\rho}\norm{\ta_h}_{p-1,h}+h\norme{\rho}^2\ls k^2h^{2p-1}\norm{\ta_h}_{p-1,h}^2+h\norme{\rho}^2.}

\emph{Step 2.} In this step, we bound the high order discrete norms of $\ta_h$ by its $L^2$-norm.
 From the definition of $A_h$, \eqref{ecipfemorth} can be rewritten as:
\eqn{(A_h\ta_h, v_h)=(k^2+1)(\ta_h,v_h)+(k^2+1)(Q_h\rho,v_h)-\i k\pd{\ta_h,v_h}-\i k\pd{\rho,v_h},\;\forall v_h\in V_h.}
Given any integer $1\le m\le p$, take $v_h=A_h^{m-1}\ta_h$ in the above equation to obtain:
\eqn{\norm{\ta_h}_{m,h}^2=&(k^2+1)\norm{\ta_h}_{m-1,h}^2+(k^2+1)(A_h^{(m-1)/2}Q_h\rho,A_h^{(m-1)/2}\ta_h)\\
&-\i k\pd{\ta_h,A_h^{m-1}\ta_h}-\i k\pd{\rho,A_h^{m-1}\ta_h}.}
 Moreover, from the trace and inverse inequalities (see Lemma~\ref{lAhinv}) and \eqref{etaGa}, we have, 
\eqn{\abs{\pd{\ta_h,A_h^{m-1}\ta_h}}&\ls \norml{\ta_h}{\Ga}\norml{A_h^{m-1}\ta_h}{\Ga}\ls\norml{\ta_h}{\Ga}h^{-1/2}\norm{\ta_h}_{2m-2,h}\\
&\ls\norml{\ta_h}{\Ga}h^{-m+1/2}\norm{\ta_h}_{m-1,h}\\
&\ls\big(kh^{p-m}\norm{\ta_h}_{p-1,h}+h^{1-m}\norme{\rho}\big)\norm{\ta_h}_{m-1,h}\\
&\ls\big(k\norm{\ta_h}_{m-1,h}+h^{1-m}\norme{\rho}\big)\norm{\ta_h}_{m-1,h},\db\\
\abs{\pd{\rho,A_h^{m-1}\ta_h}}&\ls \norml{\rho}{\Ga}\norml{A_h^{m-1}\ta_h}{\Ga}\ls h^{1/2}\norme{\rho}h^{-m+1/2}\norm{\ta_h}_{m-1,h} \\
&\ls h^{1-m}\norme{\rho}\norm{\ta_h}_{m-1,h},}
Therefore for $1\le m\le p$,
\eqn{\norm{\ta_h}_{m,h}^2\ls& k^2\norm{\ta_h}_{m-1,h}^2+k^2\norm{Q_h\rho}_{m-1,h}\norm{\ta_h}_{m-1,h}\\
&+\big(k\norm{\ta_h}_{m-1,h}+h^{1-m}\norme{\rho}\big)k\norm{\ta_h}_{m-1,h}.}
which implies by the Young's inequality that
\eqn{\norm{\ta_h}_{m,h}\ls k\norm{\ta_h}_{m-1,h}+k\norm{Q_h\rho}_{m-1,h}+h^{1-m}\norme{\rho}.}
Noting that $k\norm{Q_h\rho}_{m-1,h}\ls kh^{1-m}\norm{Q_h\rho}_{0,h}\ls kh^{2-m}\norme{\rho}\ls h^{1-m}\norme{\rho}$, we have
\eq{\norm{\ta_h}_{m,h}\ls k\norm{\ta_h}_{m-1,h}+h^{1-m}\norme{\rho},\quad 1\le m\le p.\label{estep2a}}
From a recursive use of the above estimate we have for $0\le m\le p$,
\eq{\label{estep2b}
\norm{\ta_h}_{m,h}&\ls k^{m}\norm{\ta_h}_0+\sum_{n=0}^{m-1}k^nh^{1-m+n}\norme{\rho} \ls k^m\norm{\ta_h}_0+h^{1-m}\norme{\rho}.}

\emph{Step 3.} In this step, we bound the $L^2$-norm of $\ta_h$ by its $(p-1)$-th order discrete norm.  Consider the following dual problem:
\begin{align}
-\triangle w-k^2w&=\ta_h \ \ \ \ \rm{in} \ \ \Om, \label{auxii1}\\
\frac{\partial w}{\partial n}-\i kw&=0 \ \ \ \ \ \rm{on} \ \ \Ga.
\end{align}
 Testing the conjugated \eqref{auxii1} by $e_h=\rho+\ta_h$, using the Galerkin orthogonality with $v_h=P_hw$, and using \eqref{ePh},  we get
\eqn{&(\rho+\ta_h,\ta_h)=a(e_h,w)-k^2(e_h,w)+\i k\langle e_h,w\rangle\db\\
&=a(e_h,w-P_hw)+\i k\pd{e_h,w-P_hw}-k^2(e_h,w-P_hw)\db\\
&=a(e_h,w-P_hw)+(e_h,w-P_hw)+\i k\pd{e_h,w-P_hw}-(k^2+1)(e_h,w-P_hw)\db\\
&=a(\rho, w-P_hw)+(\rho,w-P_hw)-(k^2+1)(\rho+\ta_h,w-P_hw)+\i k\pd{\rho+\ta_h,w-P_hw}.
}
And as a consequence,
\begin{align}\label{eduall2}
\norm{\ta_h}_0^2&=a(\rho, w-P_hw)-k^2(\rho,w-P_hw)+\i k\pd{\rho,w-P_hw}\\
&\quad-(k^2+1)(\ta_h,w-P_hw)+\i k\pd{\ta_h,w-P_hw}-(\rho,\ta_h)\nn\\
&\le \norme{\rho}\norme{w-P_hw}+(k^2+1)\abs{(\ta_h,w-P_hw)}\nn\\
&\quad+k\abs{\pd{\ta_h,w-P_hw}}+\norm{\rho}_0\norm{\ta_h}_0,\nn
\end{align}
where we have used \eqref{eac} to derive the last inequality.
Next we estimate the terms on the right hand side. 
Similar to Lemma~\ref{error2} we may show that
\begin{align}
 \norme{w-P_hw}& \ls \big(h+(kh)^p\big)\norm{\ta_h}_0,\qquad\|w-P_hw\|_0\ls h\big(h+(kh)^p\big)\norm{\ta_h}_0.\label{ewh-err2}
\end{align}
From Lemmas~\ref{lAhnorm}, \ref{lPh}--\ref{lQh},  and \eqref{ewh-err2},
\begin{align}
\abs{(\ta_h,w-P_hw)}&=\abs{(\ta_h,Q_hw-P_hw)}\label{ewh-err3}\\
&\le \norm{\ta_h}_{p-1,h}\norm{Q_hw-w+w-P_hw}_{1-p,h}\nn\\
&\ls \norm{\ta_h}_{p-1,h} h^p \big(h+(kh)^p\big)\norm{\ta_h}_0.\nn
\end{align}
On the other hand, from \eqref{etaGa} and \eqref{ewh-err2},
\eq{ \label{ewh-err4}\abs{\pd{\ta_h,w-P_hw}}&\ls 
\norml{\ta_h}{\Ga}\norml{w-P_hw}{\Ga}\\
&\ls\big(kh^{p-1/2}\norm{\ta_h}_{p-1,h}+h^{1/2}\norme{\rho}\big)h^{1/2}\big(h+(kh)^p\big)\norm{\ta_h}_0\nn\\
&\ls\big(kh^{p}\norm{\ta_h}_{p-1,h}+h\norme{\rho}\big)\big(h+(kh)^p\big)\norm{\ta_h}_0.\nn
}
 Finally, by plugging \eqref{erho} and \eqref{ewh-err2}--\eqref{ewh-err4} into \eqref{eduall2}, we have
\eq{\norm{\ta_h}_0\ls \big(h+(kh)^p\big)\norme{\rho}+ h^p \big(h+(kh)^p\big)k^2 \norm{\ta_h}_{p-1,h}.\label{estep1}
}

\emph{Step 4.} By combining \eqref{estep1} and \eqref{estep2b} with $m=p-1$, we have
\eqn{\norm{\ta_h}_0&\ls \big(h+(kh)^p\big)\norme{\rho}+h^p \big(h+(kh)^p\big)\big(k^{p+1}\norm{\ta_h}_0+k^2h^{2-p}\norme{\rho}\big)\\
&\ls \big(h+(kh)^p\big)\norme{\rho}+\big((kh)^{p+1}+k(kh)^{2p}\big)\norm{\ta_h}_0.}
Therefore, there exists a constant $C_0$ such that, if $k(kh)^{2p}\le C_0$, then
\eqn{\norm{\ta_h}_0\ls \big(h+(kh)^p\big)\norme{\rho}.}
Moreover \eqref{estep2a} (with $m=1$) and \eqref{enormh2} imply that
\eqn{\norm{\ta_h}_1\ls k\norm{\ta_h}_0+\norme{\rho}\ls \big(1+k(kh)^p\big)\norme{\rho}.}
Now the proof of the theorem follows from the above two estimates and \eqref{erho}. 
\end{proof}

From Theorem~\ref{thm-err-1} and Lemmas~\ref{lstau}--\ref{lapprox1}, we have the following corollary which gives preasymptotic estimates for $H^{p+1}$-regular solutions.
\begin{corollary}\label{cor-1}
Suppose  $C_{p-1,f,g}\ls 1$.  Then there exist constants $C_0, C_1, C_2$ independent of $k$ and $h$, such that if $k(kh)^{2p}\le C_0$ then the following estimates hold:
\begin{align}\label{ecor-1-a}
\norm{u-u_h}_1&\le C_1(kh)^p+C_2k(kh)^{2p},\\
k\|u-u_h\|_0&\le C_1(kh)^{p+1}+C_2k(kh)^{2p}.\label{ecor-1-b}
\end{align}
\end{corollary}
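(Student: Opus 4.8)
The plan is to combine the abstract error estimates of Theorem~\ref{thm-err-1} with the stability estimate of the continuous solution from Lemma~\ref{lstau} and the interpolation estimate of Lemma~\ref{lapprox1}. The key observation is that both right-hand sides in \eqref{error-eh1}--\eqref{error-eh2} involve the single quantity $\inf_{z_h\in V_h}\norme{u-z_h}$, so everything reduces to bounding this best-approximation term by a power of $(kh)^p$ times $C_{p-1,f,g}$.

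\emph{Step 1 (best approximation in the $\norme{\cdot}$-norm).} I would first choose $z_h=\hat u_h$, the Lagrange interpolant of $u$ furnished by Lemma~\ref{lapprox1} with $s=p+1$. This gives $\norm{u-\hat u_h}_0+h\norm{u-\hat u_h}_1\ls h^{p+1}\abs{u}_{p+1}$, hence by the definition \eqref{enorme} of $\norme{\cdot}$,
\eqn{
\norme{u-\hat u_h}^2=\norm{\na(u-\hat u_h)}_0^2+k^2\norm{u-\hat u_h}_0^2\ls h^{2p}\abs{u}_{p+1}^2+k^2h^{2p+2}\abs{u}_{p+1}^2\ls h^{2p}\abs{u}_{p+1}^2,
}
using $kh\ls 1$ in the last step. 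Thus $\inf_{z_h\in V_h}\norme{u-z_h}\ls h^p\abs{u}_{p+1}$.

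\emph{Step 2 (insert the continuous stability bound).} Next I would invoke Lemma~\ref{lstau} with $s=p+1$, which requires $f\in H^{p-1}(\Om)$ and $g\in H^{p-1/2}(\Ga)$ and yields $\norm{u}_{p+1}\ls k^p C_{p-1,f,g}$. Combined with the hypothesis $C_{p-1,f,g}\ls 1$ this gives $\abs{u}_{p+1}\le\norm{u}_{p+1}\ls k^p$, so Step 1 becomes
\eqn{
\inf_{z_h\in V_h}\norme{u-z_h}\ls h^p k^p=(kh)^p.
}

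\emph{Step 3 (assemble).} Finally I would substitute this into Theorem~\ref{thm-err-1}. Since $\norm{u-u_h}_1\le\norme{u-u_h}$, estimate \eqref{error-eh1} gives
\eqn{
\norm{u-u_h}_1\ls\big(1+k(kh)^p\big)(kh)^p\ls(kh)^p+k(kh)^{2p},
}
which is \eqref{ecor-1-a} with suitable constants $C_1,C_2$; and \eqref{error-eh2} gives
\eqn{
\norm{u-u_h}_0\ls\big(h+(kh)^p\big)(kh)^p\ls h(kh)^p+(kh)^{2p},
}
so $k\norm{u-u_h}_0\ls kh(kh)^p+k(kh)^{2p}\ls(kh)^{p+1}+k(kh)^{2p}$ using $kh\ls 1$ (the mesh condition $k(kh)^{2p}\le C_0$ in particular implies $kh\ls1$), which is \eqref{ecor-1-b}.

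\emph{Main obstacle.} There is no serious obstacle here—this corollary is a routine specialization. The only points requiring a little care are (i) checking that the regularity hypotheses of Lemma~\ref{lstau} at $s=p+1$ are exactly what is packaged into the constant $C_{p-1,f,g}$, so that the assumption $C_{p-1,f,g}\ls 1$ is the right one; and (ii) being slightly careful with the powers of $h$ when passing from the $\norme{\cdot}$-error to the $H^1$- and $L^2$-errors, in particular using $kh\ls 1$ to absorb the extra factor of $h$ (respectively $kh$) that the interpolation estimate provides in the $L^2$-part of $\norme{u-\hat u_h}$, and again in Step 3 to simplify $h(kh)^p$ versus $(kh)^p$ and $kh(kh)^p$ versus $(kh)^{p+1}$.
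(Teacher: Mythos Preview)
Your proposal is correct and follows exactly the approach indicated in the paper, namely combining Theorem~\ref{thm-err-1} with Lemma~\ref{lstau} (at $s=p+1$) and Lemma~\ref{lapprox1} (at $s=p+1$). The only superfluous remark is that $kh(kh)^p=(kh)^{p+1}$ is an identity, so the appeal to $kh\ls 1$ in the very last step is not actually needed there (though it is used, as you note, in Step~1).
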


\rem 5.1.\ 
{\rm (a)} Preasymptotic error analysis and dispersion analysis are two main tools to understand numerical behaviors in short wave computations. The latter one which is usually performed on \emph{structured} meshes estimates the error between the wave number $k$ of the continuous problem and some discrete wave number $\om$ \cite{Ainsworth04, dbb99, harari97, ib95a, ib97, thompson94, thompson06}. In particular, it is shown for the FEM (cf. \cite{Ainsworth04, ib97}) that
\begin{align*}
k-\om=
O\big(k^{2p+1}h^{2p}\big) &\text{ if } k h\ll 1, 
\end{align*}
By contrast, our preasymptotic analysis gives the error between the exact solution $u$ and the discrete solution $u_h$ and works for \emph{unstructured} meshes.  Clearly, our pollution error bounds in $H^1$-norm coincide with the phase difference $\abs{k-\om}$ as above.

{\rm (b)} For problems with large wave number, a discrete solution of reasonable accuracy requires the pollution error $C_2k(kh)^{2p}$ to be small enough. From this point of view, our mesh condition $k(kh)^{2p}\le C_0$ is quite practical.

{\rm (c)} For the preasymptotic error estimates for the FEM in one dimension, we refer to \cite{ib95a,ib97}. For the case of higher dimensions, \cite{w, zw} give estimates under the mesh condition that $k(kh)^{p+1}\le C_0$. Our condition \eqref{econd1} gives larger range of $h$ than previous results in the case of $p>1$.

{\rm (d)} Error estimates in high order discrete Sobolev norms and in negative norms can also be derived. The details are omitted.

By combining Lemmas~\ref{depcomposition}, \ref{error2} and Theorem~\ref{thm-err-1} we have the following stability estimates for the FEM.
\begin{corollary}\label{cor-3}
Suppose the solution $u\in H^2(\Om)$. Under the conditions of Theorem~\ref{thm-err-1}, there holds the following estimate:
\begin{align*}
\norm{\na u_h}_0+k\norm{u_h}_0&\lesssim C_{f,g},
\end{align*}
and hence the FEM is well-posed.
\end{corollary}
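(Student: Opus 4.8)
The plan is to combine the decomposition of the exact solution $u=\ue+\ua$ from Lemma~\ref{depcomposition} with the error estimate of Theorem~\ref{thm-err-1}. Since $u\in H^2(\Om)$, Lemma~\ref{depcomposition} gives $\abs{\ue}_j\ls k^{j-2}C_{f,g}$ for $j=0,1,2$ and $\abs{\ua}_j\ls k^{j-1}C_{f,g}$ for all $j\in\mathbb{N}_0$, and in particular $\norme{\ue}\ls C_{f,g}$ and $\norme{\ua}\ls C_{f,g}$. First I would use Lemma~\ref{error2}, which produces $\hat u_h\in V_h$ with $\norme{u-\hat u_h}\ls(h+(kh)^p)C_{f,g}$; since $kh\ls 1$ (which follows from the mesh condition \eqref{econd1}), this quantity is $\ls C_{f,g}$, so in particular $\inf_{z_h\in V_h}\norme{u-z_h}\ls C_{f,g}$.

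Next I would invoke Theorem~\ref{thm-err-1}: under the mesh condition \eqref{econd1}, estimate \eqref{error-eh1} gives
\[
\norme{u-u_h}\ls\big(1+k(kh)^p\big)\inf_{z_h\in V_h}\norme{u-z_h}\ls\big(1+k(kh)^p\big)C_{f,g}.
\]
Since $k(kh)^{2p}\le C_0$ forces $k(kh)^p=\sqrt{k}\,\sqrt{k(kh)^{2p}}\cdot(kh)^{\,0}$-type bound; more simply $\big(k(kh)^p\big)^2 = k\cdot k(kh)^{2p}\cdot(kh)^{0}$ — one checks $k(kh)^p\le\sqrt{k(kh)^{2p}}\le\sqrt{C_0}$ is false in general, so instead I would note $k(kh)^p=k^{1/2}(k(kh)^{2p})^{1/2}(kh)^{-0}$, i.e. directly from $kh\ls 1$ and $p\ge 1$ we get $k(kh)^p\le k(kh)\ls k h\cdot k^{0}$, which is not obviously $O(1)$ either. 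The clean route is: $\big(k(kh)^p\big)^2=k^2(kh)^{2p}=k\cdot k(kh)^{2p}$, and this need not be bounded. Therefore the honest conclusion is $\norme{u-u_h}\ls\big(1+k(kh)^p\big)C_{f,g}$, but the corollary as stated claims the sharper $\norme{u_h}\ls C_{f,g}$; one reconciles this by observing that $\norm{\na u_h}_0+k\norm{u_h}_0\le\norme{u_h}\le\norme{u}+\norme{u-u_h}$ and that the \emph{pollution factor} only multiplies the best-approximation error $\ls (h+(kh)^p)C_{f,g}$, not $C_{f,g}$ itself. Precisely, from the proof of Theorem~\ref{thm-err-1} one has $\norme{u-u_h}\ls\big(1+k(kh)^p\big)\norme{\rho}$ with $\norme{\rho}=\norme{u-P_hu}\ls(h+(kh)^p)C_{f,g}$ by Lemma~\ref{error2}-type arguments, so $\norme{u-u_h}\ls\big(1+k(kh)^p\big)(h+(kh)^p)C_{f,g}=\big(h+(kh)^p+k(kh)^p(h+(kh)^p)\big)C_{f,g}$, and $k(kh)^p(kh)^p=k(kh)^{2p}\le C_0$ while $k(kh)^p\cdot h=(kh)^{p+1}\cdot k^{-0}\ls 1$; all these terms are $O(1)$, hence $\norme{u-u_h}\ls C_{f,g}$.

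Finally I would conclude: since $\norm{u}_2\ls kC_{f,g}$ gives $\norme{u}\ls k\norm{u}_0+\norm{u}_1\ls C_{f,g}$ by Remark~3.1 (or directly from Lemma~\ref{depcomposition}), the triangle inequality yields
\[
\norm{\na u_h}_0+k\norm{u_h}_0\le\norme{u_h}\le\norme{u}+\norme{u-u_h}\ls C_{f,g}.
\]
Well-posedness then follows because the FEM \eqref{eFEM} is a square linear system on the finite-dimensional space $V_h$, and the a priori bound just derived (applied to the homogeneous problem $f=g=0$, whence $C_{f,g}=0$ and so $u_h=0$) shows the system matrix is injective, hence invertible. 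The main obstacle is the bookkeeping in the second paragraph: one must be careful to apply the pollution factor $1+k(kh)^p$ only to the \emph{best-approximation error} $\ls(h+(kh)^p)C_{f,g}$ (as the proof of Theorem~\ref{thm-err-1} actually delivers) rather than to $\norme{u}$, and then verify that every resulting product of powers of $kh$ and the single factor $k(kh)^{2p}$ is controlled by the mesh condition \eqref{econd1}; once this is set up correctly the estimate is immediate.
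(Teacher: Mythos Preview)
Your approach is correct and matches the paper's proof: triangle inequality $\norme{u_h}\le\norme{u}+\norme{u-u_h}$, then $\norme{u}\ls C_{f,g}$ from Lemma~\ref{depcomposition}, and $\norme{u-u_h}\ls\big(1+k(kh)^p\big)\big(h+(kh)^p\big)C_{f,g}\ls C_{f,g}$ from Theorem~\ref{thm-err-1} combined with Lemma~\ref{error2} and the mesh condition. The second paragraph wanders through several false starts before arriving at the right bookkeeping; you should excise the dead ends and write directly that $\big(1+k(kh)^p\big)\big(h+(kh)^p\big)=h+(kh)^p+(kh)^{p+1}+k(kh)^{2p}\ls 1$ under $kh\ls1$ and \eqref{econd1}.
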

\begin{proof}
It follows from Lemma~\ref{depcomposition}, Theorem~\ref{thm-err-1}, and Lemma~\ref{error2} that
\begin{align*}
\norme{u_h}\ls&\norme{u}+\norme{u-u_h}\\
&\ls \Big(1+\big(1+k(kh)^p\big)\big(h+(kh)^p\big) \Big)C_{f,g}\ls C_{f,g}.
\end{align*}
The proof is completed.
\end{proof}

\rem 5.2. {\rm (a)} This stability bound of finite element solution is of the same order as that of the continuous solution (cf. Lemma~\ref{depcomposition}).

{\rm (b)} When $k(kh)^{2p}$ is large, the well-posedness of the FEM in higher dimensions is still open.

\section{Preasymptotic error analysis of CIP-FEM}\label{sec-cipfem} In this section we prove preasymptotic error estimates of the CIP-FEM. The proofs of most results in this section are quite similar to the counterparts for the FEM, and will be either omitted or sketched by indicating the necessary modifications.  
We assume that the penalty parameters $\ga_0\le \ga_{j,e}\ls 1, \forall e\in\E_h^I$, $1\le j\le p$, where the constant $\ga_0$ will be specified later in Lemma~\ref{lga0}.

\subsection{Approximation properties} Similarly to Lemma~\ref{lapprox1}, we have the following lemma.
\begin{lemma}\label{lapprox1g}
Let $1\le s\le p+1$. Suppose $u \in H^s(\Omega)$. Then there exists $\hat{u}_h \in V_h$ such that
\begin{align}\label{elapprox1ag}
\norm{u-\hat{u}_h}_0+h E_\ga(u,\hat{u}_h)&\lesssim  h^s|u|_s.
\end{align}
\end{lemma}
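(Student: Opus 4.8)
The plan is to take the same interpolant $\hat u_h\in V_h$ that Lemma~\ref{lapprox1} produces and show that the extra jump terms appearing in $E_\ga(u,\hat u_h)$ (as compared with $\norm{u-\hat u_h}_1$) do not spoil the bound. Recall that
\[
E_\ga(u,\hat u_h)^2=\norm{u-\hat u_h}_1^2+\sum_{j=1}^p\sum_{e\in\E_h^I}\abs{\ga_{j,e}}\,h_e^{2j-1}\norm{\jm{\tfrac{\pa^j\hat u_h}{\pa n_e^j}}}_{L^2(e)}^2 .
\]
By Lemma~\ref{lapprox1} the first term is already $\ls h^{2(s-1)}\abs{u}_s^2$, so it suffices to bound each jump contribution by $h^{2(s-1)}\abs{u}_s^2$ (up to the constant $\abs{\ga_{j,e}}\ls1$, which we are allowed to absorb since the paper assumes $\ga_{j,e}\ls1$). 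First I would exploit that $u\in H^s(\Om)\subset H^1(\Om)$ has \emph{no} jump in its tangential/normal derivatives up to order $\min(s,\cdot)$ in the sense available for $H^s$ functions; more robustly, since $u$ is single-valued across $e$ we can write $\jm{\pa^j\hat u_h/\pa n_e^j}=\jm{\pa^j(\hat u_h-u)/\pa n_e^j}$ whenever $\pa^j u$ has a trace on $e$ from both sides (i.e.\ $j\le s-1$ roughly, using $s\le p+1$).

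The key estimate is then a standard scaled trace/inverse inequality on the two elements $K,K'$ sharing $e$: for a polynomial (or mapped polynomial) $w=\hat u_h-u$ restricted to $K$,
\[
h_e^{2j-1}\norm{\tfrac{\pa^j w}{\pa n_e^j}}_{L^2(e)}^2
\ls h_e^{2j-1}\Big(h_K^{-1}\abs{w}_{H^j(K)}^2+h_K\,\abs{w}_{H^{j+1}(K)}^2\Big)
\ls \sum_{i=j}^{j+1} h_K^{2(i-1)}\abs{w}_{H^i(K)}^2 ,
\]
using $h_e\eqsim h_K\eqsim h$. Since $w=\hat u_h-u$ with $\hat u_h$ a standard interpolant, element-wise interpolation estimates give $\abs{\hat u_h-u}_{H^i(K)}\ls h_K^{s-i}\abs{u}_{H^s(K)}$ for $0\le i\le s$; plugging $i=j,j+1$ (both $\le s$ because $j\le p\le s-1+(p+1-s)$, and in the borderline case $j+1=s+1$ one simply uses that the $H^{j+1}$-seminorm of the polynomial piece is controlled by an inverse inequality back to $H^s$, or one keeps only the $i=j$ term which already suffices when $j=s$). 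This yields $h_e^{2j-1}\norm{\jm{\pa^j\hat u_h/\pa n_e^j}}_{L^2(e)}^2\ls h_K^{2(s-1)}(\abs{u}_{H^s(K)}^2+\abs{u}_{H^s(K')}^2)$. Summing over $j=1,\dots,p$ and over $e\in\E_h^I$, and using shape-regularity so each element is counted boundedly many times, gives $\sum_{j,e}\abs{\ga_{j,e}}h_e^{2j-1}\norm{\jm{\pa^j\hat u_h/\pa n_e^j}}_{L^2(e)}^2\ls h^{2(s-1)}\abs{u}_s^2$. Adding the $\norm{u-\hat u_h}_0\ls h^s\abs{u}_s$ bound and the $\norm{u-\hat u_h}_1$ bound from Lemma~\ref{lapprox1} completes the proof.

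The main obstacle I anticipate is the bookkeeping around the index $j$ versus the regularity $s$: when $s<p+1$ and $j$ is close to $s$, the term $\abs{u}_{H^{j+1}(K)}$ need not make sense for $u$, so one has to be careful to apply the inverse inequality on the polynomial part $\hat u_h$ only, and to split $\jm{\pa^j\hat u_h/\pa n_e^j}$ as $\jm{\pa^j(\hat u_h-Q_Ku)/\pa n_e^j}$ (or $-u$) appropriately so that every derivative of order $>s$ falls on a polynomial. A clean way is: bound the jump of $\pa^j\hat u_h$ by the jump of $\pa^j(\hat u_h-u)$ for $j\le s-1$ and, for the single possible value $j=s$ (which occurs only when $s\le p$), estimate $h_e^{2s-1}\norm{\jm{\pa^s\hat u_h/\pa n_e^s}}_{L^2(e)}^2$ directly via a trace inequality on each element followed by the inverse inequality $\abs{\hat u_h}_{H^s(K)}\ls h_K^{-(s-1)}\abs{\hat u_h}_{H^1(K)}$ together with $\abs{\hat u_h}_{H^1(K)}\ls\abs{u}_{H^1(K)}+h_K^{s-1}\abs{u}_{H^s(K)}$. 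Either route keeps all high-order derivatives on polynomials and produces the stated $h^s\abs{u}_s$ rate; the rest is routine scaling on the reference element via the maps $F_K$.
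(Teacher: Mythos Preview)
Your overall strategy---take the interpolant from Lemma~\ref{lapprox1}, split the penalty sum according to whether $j\le s-1$ or $j\ge s$, and in the second regime work directly on the polynomial $\hat u_h$---is exactly the paper's approach. Two points in your ``clean way'' paragraph need repair, however.

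First, the range $j\ge s$ is not a ``single possible value $j=s$'': when $s\le p$ one must treat every $j$ with $s\le j\le p$ (think $s=1$, $p=3$). For $j>s$ you cannot write $\jm{\pa^j\hat u_h/\pa n_e^j}=\jm{\pa^j(\hat u_h-u)/\pa n_e^j}$, since $\pa^j u$ need not even have a trace, so all of these indices require the direct estimate on $\hat u_h$.

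Second, and more seriously, your route for $j=s$---trace inequality followed by the inverse inequality $\abs{\hat u_h}_{H^s(K)}\ls h_K^{-(s-1)}\abs{\hat u_h}_{H^1(K)}$ and then $\abs{\hat u_h}_{H^1(K)}\ls \abs{u}_{H^1(K)}+h_K^{s-1}\abs{u}_{H^s(K)}$---does not give the claimed rate. Tracking the powers of $h$ yields
\[
h_e^{2s-1}\norm{\jm{\tfrac{\pa^s\hat u_h}{\pa n_e^s}}}_{L^2(e)}^2
\ls h^{2s-2}\abs{\hat u_h}_{H^s(K)}^2
\ls \abs{\hat u_h}_{H^1(K)}^2
\ls \abs{u}_{H^1(K)}^2+h^{2(s-1)}\abs{u}_{H^s(K)}^2,
\]
and after summing, the term $\abs{u}_{H^1(\Om)}^2$ is \emph{not} controlled by $h^{2(s-1)}\abs{u}_{H^s(\Om)}^2$. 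The fix is the one you already sketched in the first paragraph but then abandoned: for each $s\le j\le p$ use the inverse inequality only down to order $s$, namely $h^{2j-2}\abs{\hat u_h}_{H^j(K)}^2\ls h^{2s-2}\abs{\hat u_h}_{H^s(K)}^2$, and then invoke the stability $\abs{\hat u_h}_{H^s(\T_h)}\ls\abs{u}_{H^s(\Om)}$ (triangle inequality plus the elementwise bound $\abs{u-\hat u_h}_{H^s(K)}\ls\abs{u}_{H^s(K)}$). This is precisely what the paper does in \eqref{pe2}, and it closes both gaps at once.
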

\begin{proof}
Let $\hat{u}_h\in V_h$ be chosen as in Lemma~\ref{lapprox1}. Then, for $0\le j\le s$,
\begin{align}\label{inte}
\norm{u-\hat{u}_h}_{H^j(\T_h)}:=\left(\sum_{K\in\T_h}\norm{u-\hat{u}_h}_{H^j(K)}^2\right)^{\frac12} \lesssim h^{s-j}\abs{u}_{H^s(\Om)}.
\end{align}
In particular,
\begin{align}\label{elapprox1ag1}
\norm{u-\hat{u}_h}_0+h\norm{u-\hat{u}_h}_{1}&\lesssim  h^s|u|_s.
\end{align}
Next we estimate penalty terms in $E_\ga(u,\hat u_h)$ (cf. \eqref{e2.6a}). By an application of  the local trace inequality
\eq{\label{elti}
\norm{v}_{L^2(\p K)}^2\lesssim h^{-1} \norm{v}_{L^2(K)}^2+\norm{v}_{L^2(K)}\norm{\na v}_{L^2(K)},\quad \forall v\in H^1(K), K\in\M_h,}
the inverse inequality,  and  \eqref{inte}, we conclude that, for $j\leq s-1$,
\begin{align} \label{pe1}
&\sum_{e\in\E_h^{I}}\abs{\ga_{j,e}}\, h_e^{2j-1} \norm{\left[\frac{\pa^j \hat{u}_h}{\pa n_e^j}\right]}_{L^2(e)}^2 =\sum_{e\in\E_h^{I}}\abs{\ga_{j,e}}\, h_e^{2j-1} \norm{\left[\frac{\pa^j (u-\hat{u}_h)}{\pa n_e^j}\right]}_{L^2(e)}^2\\
&\lesssim  h^{2j-1}  \sum_{K\in \T_h}\sum_{e\subset\p K} \norm{\frac{\pa^j (u-\hat{u}_h)}{\pa n_e^j}}_{L^2(e)}^2\nn\\
&\lesssim  h^{2j-1}  \big( h^{-1}\norm{u-\hat{u}_h}_{H^j(\T_h)}^2+\norm{u-\hat{u}_h}_{H^j(\T_h)}\norm{u-\hat{u}_h}_{H^{j+1}(\T_h)}\big)\nn\\
&\lesssim  h^{2j-1} \cdot h^{2(s-j)-1} \abs{u}_{H^s(\Om)}^2= h^{2(s-1)} \abs{u}_{H^s(\Om)}^2.\nn
\end{align}
On the other hand, for $s\leq j\leq p$,
\begin{align}\label{pe2}
&\sum_{e\in\E_h^{I}}\abs{\ga_{j,e}}\, h_e^{2j-1} \norm{\left[\frac{\pa^j \hat{u}_h}{\pa n_e^j}\right]}_{L^2(e)}^2 \lesssim h^{2j-1} \sum_{K\in \T_h}\sum_{e\in\p K} \norm{\frac{\pa^j \hat{u}_h}{\pa n_e^j}}_{L^2(e)}^2\\
&\lesssim h^{2j-2}\abs{\hat{u}_h}_{H^j(\T_h)}^2 \lesssim h^{2(s-1)}\abs{\hat{u}_h}_{H^s(\T_h)}^2 \lesssim h^{2(s-1)}\abs{u}_{H^s(\Om)}^2.\nn
\end{align}
Then \eqref{elapprox1ag} follows by combining  \eqref{e2.6a}, \eqref{elapprox1ag1} and \eqref{pe1}--\eqref{pe2}. This completes the proof of the
lemma.
\end{proof}

If $u$ is the exact solution satisfying the decomposition $u=\ue+\ua$ as in Lemma~\ref{depcomposition}, then we may approximate $u$ by $\hat u_h=\widehat\ue_h+\widehat\ua_h$ and show the following estimate.
 \begin{lemma}\label{error2g}
Let $u$ be the solution to the problem \eqref{eq1.1a}-\eqref{eq1.1b}. Suppose $f\in L^2(\Om)$ and $g\in H^{1/2}(\Ga)$. Then
there exists $\hat{u}_h \in V_h$ such that
\begin{align}
\|u-\hat{u}_h\|_0+h\mathbb{E}_\ga(u,\hat{u}_h)&\lesssim \big(h^2+h(kh)^p\big) C_{f,g}, \label{u_err0g}
\end{align}
where $C_{f,g}$ is defined in Lemmas~\ref{depcomposition}.
\end{lemma}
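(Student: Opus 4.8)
\textbf{Proof proposal for Lemma~\ref{error2g}.}

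The plan is to mimic the proof of Lemma~\ref{error2}, combining the decomposition $u=\ue+\ua$ from Lemma~\ref{depcomposition} with the approximation estimate of Lemma~\ref{lapprox1g}, and to treat the elliptic and analytic parts separately before summing. First I would fix $\widehat{u}_{\mathcal E,h}\in V_h$ and $\widehat{u}_{\mathcal A,h}\in V_h$ as the approximants guaranteed by Lemma~\ref{lapprox1g} applied to $\ue$ and $\ua$ respectively, and set $\widehat u_h:=\widehat u_{\mathcal E,h}+\widehat u_{\mathcal A,h}$, so that $u-\widehat u_h=(\ue-\widehat u_{\mathcal E,h})+(\ua-\widehat u_{\mathcal A,h})$. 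Since $\mathbb{E}_\ga(\cdot,\cdot)$ is built from seminorms it obeys a triangle inequality in the obvious sense, so it suffices to bound $\|\ue-\widehat u_{\mathcal E,h}\|_0+h\,\mathbb{E}_\ga(\ue,\widehat u_{\mathcal E,h})$ and $\|\ua-\widehat u_{\mathcal A,h}\|_0+h\,\mathbb{E}_\ga(\ua,\widehat u_{\mathcal A,h})$ individually. Recall from \eqref{e2.6b} that $\mathbb{E}_\ga(v,v_h)^2=E_\ga(v,v_h)^2+k^2\|v-v_h\|_0^2$, so each piece splits further into an $E_\ga$ contribution controlled by Lemma~\ref{lapprox1g} and an $L^2$ contribution scaled by $k$.

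For the elliptic part I would invoke Lemma~\ref{lapprox1g} with $s=2$, using the $H^2$-bound $|\ue|_2\ls C_{f,g}$ from \eqref{u_E}: this gives $\|\ue-\widehat u_{\mathcal E,h}\|_0+h\,E_\ga(\ue,\widehat u_{\mathcal E,h})\ls h^2\,C_{f,g}$, and in particular $\|\ue-\widehat u_{\mathcal E,h}\|_0\ls h^2 C_{f,g}$, so $k\|\ue-\widehat u_{\mathcal E,h}\|_0\ls kh^2 C_{f,g}\ls h(kh)\,C_{f,g}\ls h(h+(kh)^p)C_{f,g}$ when $kh\ls 1$ (and in any case the term is absorbed; for $p=1$ it is exactly of the stated form, and for $p\ge 2$ one keeps $kh^2\ls h\cdot(kh)$ and notes $kh\ls (kh)^p$ is false in general, so one should instead just observe $kh^2 = h\cdot kh \le h(h+(kh)^p)$ only requires $kh\le h+(kh)^p$, which holds for $kh\ls1$). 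Thus the elliptic contribution to $\|u-\widehat u_h\|_0+h\,\mathbb{E}_\ga(u,\widehat u_h)$ is $\ls (h^2+h(kh)^p)C_{f,g}$; the dominant term among these is $h^2$, which is already present on the right-hand side of \eqref{u_err0g}.

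For the analytic part I would apply Lemma~\ref{lapprox1g} with $s=p+1$, using the bound $|\ua|_{p+1}\ls k^{p}C_{f,g}$ from \eqref{u_A}: this yields $\|\ua-\widehat u_{\mathcal A,h}\|_0+h\,E_\ga(\ua,\widehat u_{\mathcal A,h})\ls h^{p+1}k^p C_{f,g}=h(kh)^p C_{f,g}$, hence also $\|\ua-\widehat u_{\mathcal A,h}\|_0\ls h(kh)^p\ls h^2(kh)^{p-1}C_{f,g}$. Wait — I would rather extract $\|\ua-\widehat u_{\mathcal A,h}\|_0$ directly as $\ls h^{p+1}k^p C_{f,g}$, so $k\|\ua-\widehat u_{\mathcal A,h}\|_0\ls k^{p+1}h^{p+1}C_{f,g}=(kh)^{p+1}C_{f,g}\ls h(kh)^p C_{f,g}$ provided $kh\ls 1$. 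Summing the elliptic and analytic contributions and using $\mathbb{E}_\ga(u,\widehat u_h)\le \mathbb{E}_\ga(\ue,\widehat u_{\mathcal E,h})+\mathbb{E}_\ga(\ua,\widehat u_{\mathcal A,h})$ then gives exactly \eqref{u_err0g}. The only mild subtlety — and the place to be careful — is the bookkeeping of the $k$-powers when passing the $L^2$-errors through the factor $k$ in $\mathbb{E}_\ga$, and the implicit use of $kh\ls1$ to absorb cross terms of the form $kh^2$ and $(kh)^{p+1}$ into $h^2+h(kh)^p$; since the paper works throughout under $kh\ls 1$ (indeed $k(kh)^{2p}\le C_0$), this is harmless. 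No genuine obstacle arises because Lemma~\ref{lapprox1g} has already done the work of estimating the penalty seminorms in $E_\ga$, so the proof is, as the authors say, a routine repetition of the argument for Lemma~\ref{error2}.
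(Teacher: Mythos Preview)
Your approach is exactly the one the paper indicates (it does not spell out a proof but says to take $\hat u_h=\widehat\ue_h+\widehat\ua_h$ and combine Lemma~\ref{depcomposition} with Lemma~\ref{lapprox1g}), and the argument is sound. One small bookkeeping slip: the claim ``$kh\le h+(kh)^p$ holds for $kh\lesssim 1$'' is false for $p\ge 2$; the correct absorption for the elliptic $L^2$ term is simply $h\cdot k\|\ue-\widehat u_{\mathcal E,h}\|_0\lesssim kh^3=h^2(kh)\lesssim h^2$ under $kh\lesssim 1$.
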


\subsection{Discrete elliptic operator and discrete Sobolev norms} The following lemma determines the constant $\ga_0$.
\begin{lemma}\label{lga0} There exists a constant $\ga_0<0$, such that, if $\ga_0\le\ga_{j,e}\ls 1, $ for $ 1\le j\le p, e\in \E_h^I$, then
\eq{a_\ga(v_h,v_h)^{1/2}\eqsim\norm{\na v_h}_0, \quad\forall v_h\in V_h.\label{elga0}}
\end{lemma}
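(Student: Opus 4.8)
The goal is to show that the sesquilinear form $a_\ga(\cdot,\cdot)$ restricted to $V_h$ is, up to constants, the same as the Dirichlet form $\norm{\na\cdot}_0^2$, for penalty parameters bounded above by $O(1)$ and bounded below by some (possibly negative) constant $\ga_0$. One direction is elementary: since $\abs{\ga_{j,e}}\ls 1$, the local trace inequality \eqref{elti} together with the inverse inequality gives, for each $j$ and each interior edge/face $e=\p K\cap\p K'$,
\eqn{
h_e^{2j-1}\norm{\jm{\tfrac{\pa^j v_h}{\pa n_e^j}}}_{L^2(e)}^2
\ls h^{2j-1}\sum_{K''\in\{K,K'\}}\Big(h^{-1}\abs{v_h}_{H^j(K'')}^2+\abs{v_h}_{H^j(K'')}\abs{v_h}_{H^{j+1}(K'')}\Big)
\ls \norm{\na v_h}_{L^2(K\cup K')}^2,
}
after absorbing all powers of $h$ via $\abs{v_h}_{H^{j}(K'')}\ls h^{1-j}\norm{\na v_h}_{L^2(K'')}$. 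Summing over $e$ and $j$ (there are $O(1)$ values of $j$ and each element has $O(1)$ faces) yields $\abs{J(v_h,v_h)}\ls \norm{\na v_h}_0^2$, hence $a_\ga(v_h,v_h)\le a(v_h,v_h)+\abs{J(v_h,v_h)}\ls\norm{\na v_h}_0^2$. This also shows $\abs{v_h}_{1,\ga}\eqsim\norm{\na v_h}_0$ whenever the $\ga_{j,e}$ are nonnegative, which is the easy sub-case.

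The substance is the lower bound when some $\ga_{j,e}$ are negative. The idea is that the penalty term, although it may contribute negatively, is controlled by a \emph{small} multiple of $\norm{\na v_h}_0^2$ once we also extract a small positive reserve from $a(v_h,v_h)$ itself. Concretely, write $a_\ga(v_h,v_h)=\norm{\na v_h}_0^2+J(v_h,v_h)$ and use the bound just proved in the sharper form $\abs{J(v_h,v_h)}\le C_J\,\max_{j,e}\abs{\ga_{j,e}}\,\norm{\na v_h}_0^2$, where $C_J$ depends only on the shape regularity, on $p$, and on the trace/inverse constants, but \emph{not} on the $\ga_{j,e}$. If $\ga_{j,e}\ge 0$ for all $j,e$ there is nothing to prove for the lower bound. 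Otherwise, let $\ga_0:=-\tfrac{1}{2C_J}$; then whenever $\ga_0\le\ga_{j,e}$ for all $j,e$, the negative part of $J$ satisfies $J(v_h,v_h)\ge-C_J\abs{\ga_0}\norm{\na v_h}_0^2=-\tfrac12\norm{\na v_h}_0^2$, so $a_\ga(v_h,v_h)\ge\tfrac12\norm{\na v_h}_0^2$. Combined with the upper bound this gives \eqref{elga0}.

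The main obstacle, and the point that needs care, is making the constant $C_J$ genuinely independent of the penalty parameters: the trace-inequality/inverse-inequality chain used to bound each penalty term must be carried out so that the only place the $\ga_{j,e}$ enter is the explicit scalar factor $\abs{\ga_{j,e}}$ out front, with everything else absorbed into geometric constants via $h_K\eqsim h$ and the element maps $F_K$ (this is where one invokes that $p=O(1)$ is fixed, so the number of penalty levels and the norms of the reference-element derivative operators are bounded). Once $C_J$ is pinned down this way, choosing $\ga_0=-1/(2C_J)$ is automatic and closes the argument. One should also note that $\ga_0$ then depends only on $\Om$, the mesh regularity, and $p$, consistent with the paper's convention that generic constants are independent of $h$, $k$, $f$, $g$, and the penalty parameters.
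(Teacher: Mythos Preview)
Your proposal is correct and follows essentially the same route as the paper: both arguments use the local trace inequality \eqref{elti} together with inverse estimates to bound each jump term by $\norm{\na v_h}_{L^2(\Om_e)}^2$ with a constant independent of the $\ga_{j,e}$, and then pick $\ga_0$ as a fixed negative multiple of the reciprocal of that constant so that the (possibly) negative contribution of $J$ is absorbed by $\norm{\na v_h}_0^2$. The only cosmetic difference is that the paper bounds $J(v_h,v_h)\ge\ga_0\tilde C\norm{\na v_h}_0^2$ directly from $\ga_{j,e}\ge\ga_0$ and chooses any $\ga_0\in(-1/\tilde C,0)$, whereas you split $J$ into its nonnegative and negative parts and fix $\ga_0=-1/(2C_J)$; the two formulations are equivalent.
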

\begin{proof}
For any $e\in\E_h^I$, let $\Om_e$ be the union of two elements in $\T_h$ that share the common edge/face $e$.
From \eqref{elti} and the inverse inequality, we have
\eqn{h_e^{2j-1} \norm{\jm{\frac{\pa^j v_h}{\pa n_e^j}}}_{L^2(e)}^2
&\ls h_e^{2j-1}h_e^{-1}\sum_{K\subset\Om_e}\abs{v_h}_{H^{j}(K)}^2\ls \norm{\na v_h}_{L^2(\Om_e)}^2.}
Therefore, there exists a constant $\tilde{C}>0$ such that
\eqn{J(v_h,v_h)&=\sum_{j=1}^p\sum_{e\in\E_h^{I}}\ga_{j,e}\, h_e^{2j-1} \norm{\jm{\frac{\pa^j v_h}{\pa n_e^j}}}_{L^2(e)}^2\\
&\ge\sum_{j=1}^p\sum_{e\in\E_h^{I}}\ga_0\, h_e^{2j-1} \norm{\jm{\frac{\pa^j v_h}{\pa n_e^j}}}_{L^2(e)}^2\ge \ga_0\tilde{C}\norm{\na v_h}_0^2.}
Moreover,
\eqn{J(v_h,v_h)\ls\norm{\na v_h}_0^2.}
The above two estimates and  \eqref{eah} imply that
\eqn{(1+\ga_0\tilde{C})\norm{\na v_h}_0^2\le a_\ga(v_h,v_h)=\norm{\na v_h}_0^2+J(v_h,v_h)\ls\norm{\na v_h}_0^2.  }
Therefore, \eqref{elga0} holds if $-1/\tilde{C}<\ga_0<0$. This completes the proof of the lemma. \end{proof}

\rem 6.1. It follows from the proof of the above lemma and \eqref{e2.5}--\eqref{e2.5b} that, if $|\ga_{j,e}|\ls 1,  $ for $ 1\le j\le p, e\in \E_h^I$, then
\eqn{\norm{v_h}_{1,\ga}\eqsim\norm{v_h}_1, \quad\forall v_h\in V_h.}

\emph{In the rest of this section we assume that $\ga_0$ is determined by Lemma~\ref{lga0} and that  $\ga_0\le\ga_{j,e}\ls 1, $ for $ 1\le j\le p, e\in \E_h^I$.}

Define the elliptic projection $P_{h,\ga}$ as follows.
\eq{a_\ga(P_{h,\ga}\vp, v_h)+(P_{h,\ga}\vp,v_h)=a(\vp, v_h)+(\vp,v_h), \quad\forall v_h\in V_h, \vp\in V,\label{ePhg}}
where $a_\ga$ is defined in \eqref{eah}.
Then we have the following error estimates in $H^1$, $L^2$, and negative norms.
\begin{lemma}\label{lPhg} For any $-1\le j\le p-1$ and $\vp\in H^1(\Om)$,
\eqn{\norm{\vp-P_{h,\ga}\vp}_{-j}\ls h^{j+1}\inf_{\vp_h\in V_h}E_\ga(\vp,\vp_h).}
\end{lemma}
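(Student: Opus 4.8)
\textbf{Proof proposal for Lemma~\ref{lPhg}.} The plan is to mimic the duality (Aubin--Nitsche) argument used for the classical elliptic projection in Lemma~\ref{lPh}, taking care of the extra penalty term $J(\cdot,\cdot)$ in the definition of $P_{h,\ga}$. First I would treat the energy-norm case. Writing $\eta:=\vp-P_{h,\ga}\vp$, subtract \eqref{ePhg} from the identity $a(\vp,v_h)+(\vp,v_h)=a(\vp,v_h)+(\vp,v_h)$ to obtain the Galerkin-type orthogonality $a_\ga(\vp-P_{h,\ga}\vp,v_h)+(\vp-P_{h,\ga}\vp,v_h)=-J(\vp,v_h)$ for all $v_h\in V_h$; note $J(\vp,v_h)$ need not vanish since $\vp\in H^1(\Om)$ only. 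Using coercivity from Lemma~\ref{lga0} (so $a_\ga(v_h,v_h)+(v_h,v_h)\gtrsim\norm{v_h}_1^2$ on $V_h$) together with the continuity of $a_\ga$ measured in $\abs{\cdot}_{1,\ga}$, a standard Céa-type estimate gives $E_\ga(\vp,P_{h,\ga}\vp)\lesssim\inf_{v_h\in V_h}E_\ga(\vp,v_h)$, which is the $j=-1$ case of the claimed bound (recall $E_\ga(\vp,P_{h,\ga}\vp)=\norm{\vp-P_{h,\ga}\vp}_{1,\ga}\ge\norm{\vp-P_{h,\ga}\vp}_1$ when $P_{h,\ga}\vp\in V_h$; for a general competitor $v_h$ one uses $E_\ga$).

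Next, for $0\le j\le p-1$, I would estimate $\norm{\eta}_{-j}$ by duality. Given $\psi\in H^j(\Om)$, let $z$ solve $-\Delta z+z=\psi$ in $\Om$ with $\partial z/\partial n=0$ on $\Ga$, so that $z\in H^{j+2}(\Om)$ with $\norm{z}_{j+2}\lesssim\norm{\psi}_j$, and $a(w,z)+(w,z)=(w,\psi)$ for all $w\in H^1(\Om)$. Since $z\in H^{p+1}(\Om)$ (here $j+2\le p+1$), we have $J(z,\cdot)=0$, hence $a_\ga(w,z)+(w,z)=(w,\psi)$ as well for $w\in V$. Then, using the Galerkin orthogonality above with $v_h=\hat z_h$ an approximation to $z$ in $V_h$,
\eqn{
(\eta,\psi)&=a_\ga(\eta,z)+(\eta,z)=a_\ga(\eta,z-\hat z_h)+(\eta,z-\hat z_h)+a_\ga(\eta,\hat z_h)+(\eta,\hat z_h)\\
&=a_\ga(\eta,z-\hat z_h)+(\eta,z-\hat z_h)-J(\vp,\hat z_h).
}
The first two terms are bounded by $\abs{\eta}_{1,\ga}\,\abs{z-\hat z_h}_{1,\ga}+\norm{\eta}_0\norm{z-\hat z_h}_0$; by Lemma~\ref{lapprox1g} applied to $z$ (with $s=j+2$) one gets $\abs{z-\hat z_h}_{1,\ga}=E_\ga(z,\hat z_h)\lesssim h^{j+1}\norm{z}_{j+2}$, which combined with the energy-norm bound just proved yields a contribution $\lesssim h^{j+1}E_\ga(\vp,P_{h,\ga}\vp)\norm{\psi}_j$. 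For the penalty term, since $J(\vp,\hat z_h)=J(\vp-\hat z_h^{\vp},\hat z_h)$ for any $\hat z_h^\vp$ and $J(\vp,z)=0$, I would rewrite $J(\vp,\hat z_h)=J(\vp,\hat z_h-z)$ and bound it by $\abs{\vp-P_{h,\ga}\vp}_{1,\ga}^{1/2}$-type Cauchy--Schwarz against the jump seminorm of $z-\hat z_h$, which again is $O(h^{j+1})$ by the trace/inverse estimates exactly as in \eqref{pe1}; alternatively, writing $J(\vp,\hat z_h)=J(\eta+P_{h,\ga}\vp,\hat z_h)$ and using $J(\vp,\cdot)$ continuity in $\abs{\cdot}_{1,\ga}$ directly gives $\abs{J(\vp,\hat z_h)}\lesssim \abs{\vp}_{1,\ga}\,\abs{z-\hat z_h}_{1,\ga}$ — but $\abs{\vp}_{1,\ga}$ is not controlled, so the correct move is the former. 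Dividing by $\norm{\psi}_j$ and taking the supremum gives $\norm{\eta}_{-j}\lesssim h^{j+1}\inf_{\vp_h\in V_h}E_\ga(\vp,\vp_h)$.

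The main obstacle I anticipate is handling the penalty term $J(\vp,\hat z_h)$ correctly: because $\vp\in H^1(\Om)$ is not piecewise-$H^{p+1}$, the quantity $\abs{\vp}_{1,\ga}$ is generally infinite and the naive continuity estimate fails. The resolution is to exploit that $J(z,\cdot)=0$ for the smooth dual solution $z$, so $J(\vp,\hat z_h)=J(\vp,\hat z_h-z)$, and to recognize that the jumps of $\hat z_h-z$ can be estimated \emph{directly} through the local trace inequality \eqref{elti} and the inverse inequality against $\norm{z-\hat z_h}_{H^{m}(\T_h)}$-type broken norms — exactly the computation performed in \eqref{pe1}--\eqref{pe2} — without ever invoking $\abs{\vp}_{1,\ga}$; one pairs these against the (finite) jump seminorm of $\hat z_h=(\hat z_h-\vp_h)+\vp_h$ appearing in $\eta$ via $E_\ga(\vp,\vp_h)$, using that $\sum_{j,e}\abs{\ga_{j,e}}h_e^{2j-1}\norm{[\partial_{n_e}^j\hat z_h]}_{L^2(e)}^2\lesssim h^{2(j+1)}\norm{z}_{j+2}^2$. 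Once this term is shown to be $O(h^{j+1})\norm{z}_{j+2}\,E_\ga(\vp,\hat z_h^\vp)$, the rest is bookkeeping identical to the proof of Lemma~\ref{lPh}.
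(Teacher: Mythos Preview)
Your orthogonality identity $a_\ga(\eta,v_h)+(\eta,v_h)=-J(\vp,v_h)$ is not well-posed (and, incidentally, has the wrong sign): for $\vp\in H^1(\Om)$ only, neither $a_\ga(\vp,\cdot)$ nor $J(\vp,\cdot)$ is defined, since both require the edge traces $\bigl[\partial_{n_e}^j\vp\bigr]$ with $1\le j\le p$. You flag this obstacle at the end, but the proposed cure --- writing $J(\vp,\hat z_h)=J(\vp,\hat z_h-z)$ and estimating the jumps of $\hat z_h-z$ --- does not help: the undefined factor $\bigl[\partial_{n_e}^j\vp\bigr]$ remains in every term of the pairing, and no amount of smallness of the \emph{other} factor rescues an object that is not defined. (As a side remark, $J(z,\cdot)=0$ would also need $z\in H^{p+1}(\Om)$, hence $j=p-1$; for $j<p-1$ the higher-order jumps of $z$ are not even well-defined.)

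The paper sidesteps the issue by never writing $J(\vp,\cdot)$. Directly from \eqref{ePhg} one gets
\[
a(\vp-P_{h,\ga}\vp,\,v_h)+(\vp-P_{h,\ga}\vp,\,v_h)=J(P_{h,\ga}\vp,\,v_h),\qquad v_h\in V_h,
\]
which involves only the well-defined jumps of the discrete function $P_{h,\ga}\vp\in V_h$. In the duality step the paper then takes $v_h=P_{h,\ga}w$ (the projection of the dual solution, not an arbitrary interpolant), so the residual penalty term becomes $J(P_{h,\ga}\vp,\,P_{h,\ga}w)$. By Cauchy--Schwarz this is bounded by the product of the jump seminorms of $P_{h,\ga}\vp$ and $P_{h,\ga}w$, and these are precisely the penalty contributions in $E_\ga(\vp,P_{h,\ga}\vp)$ and $E_\ga(w,P_{h,\ga}w)$. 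Combined with the quasi-optimality $E_\ga(\vp,P_{h,\ga}\vp)\ls\inf_{\vp_h}E_\ga(\vp,\vp_h)$ and Lemma~\ref{lapprox1g} applied to $w$, this yields the claimed bound at once. Your $j=-1$ step (the C\'ea estimate) is essentially the paper's; the genuine gap is only in the negative-norm argument, and the fix is to move the penalty onto $P_{h,\ga}\vp$ rather than $\vp$.
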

\begin{proof} \eqref{ePhg} can be rewritten as:
 \eq{a(\vp-P_{h,\ga}\vp, v_h)+(\vp-P_{h,\ga}\vp,v_h)=J(P_{h,\ga}\vp,v_h), \quad\forall v_h\in V_h, \vp\in V.\label{ePhga}}
From Remark 6.1, Lemma~\ref{lga0}, \eqref{ePhga}, \eqref{e2.5b}, and \eqref{e2.6a}, we have, for any $\vp_h\in V_h$,
\eqn{\|\vp_h-&P_{h,\ga}\vp\|_{1,\ga}^2\ls\norm{\vp_h-P_{h,\ga}\vp}_{1}^2\\
&\ls a_\ga(\vp_h-P_{h,\ga}\vp,\vp_h-P_{h,\ga}\vp)+(\vp_h-P_{h,\ga}\vp,\vp_h-P_{h,\ga}\vp)\\
&=a(\vp_h-\vp,\vp_h-P_{h,\ga}\vp)+(\vp_h-\vp,\vp_h-P_{h,\ga}\vp)+J(\vp_h,\vp_h-P_{h,\ga}\vp)\\
&\ls E_\ga(\vp,\vp_h)\norm{\vp_h-P_{h,\ga}\vp}_{1,\ga},}
Therefore, from the triangle inequality, we have
\eqn{E_\ga(\vp,P_{h,\ga}\vp)\le E_\ga(\vp,\vp_h)+\|\vp_h-P_{h,\ga}\vp\|_{1,\ga}\ls E_\ga(\vp,\vp_h)}
and hence
\eq{\label{elPhga}E_\ga(\vp,P_{h,\ga}\vp)\ls \inf_{\vp_h\in V_h}E_\ga(\vp,\vp_h).}
which implies that the lemma holds with $j=-1$.

Next we prove the  error estimates in $L^2$ (j=0) and negative norms ($1\le j\le p-1$)  by the duality argument (cf. \cite{bs08}).
For any $v\in H^j(\Om)$, let $w$ be the solution of the following problem:
\begin{align}\label{dp}
-\De w + w &=v\qquad in\qquad \Om,\\
\frac{\pa w}{\pa n}&=0\qquad on\qquad \pa \Om.\nn
\end{align}
Testing  the conjugated \eqref{dp} by $\vp-P_{h,\ga}\vp$ and using \eqref{ePhga}, \eqref{elPhga}, and Lemma~\ref{lapprox1g}, we get
\begin{align}
&(\vp-P_{h,\ga}\vp,v)=a(\vp-P_{h,\ga}\vp, w)+(\vp-P_{h,\ga}\vp,w)\\
&=a(\vp-P_{h,\ga}\vp, w-P_{h,\ga}w)+(\vp-P_{h,\ga}\vp,w-P_{h,\ga}w)+J(P_{h,\ga}\vp,P_{h,\ga}w)\nn\\
&\leq E_\ga(\vp,P_{h,\ga}\vp)E_\ga(w,P_{h,\ga}w)\ls \inf_{\vp_h\in V_h}E_\ga(\vp,\vp_h)\inf_{w_h\in V_h}E_\ga(w,w_h)\nn\\
&\ls \inf_{\vp_h\in V_h}E_\ga(\vp,\vp_h) h^{j+1}\norm{w}_{H^{j+2}(\Om)}\ls \inf_{\vp_h\in V_h}E_\ga(\vp,\vp_h)h^{j+1}\norm{v}_{H^j(\Om)},\nn
\end{align}
which implies that the lemma holds with $0\le j\leq p-1$.
This completes the proof of the lemma.
\end{proof}

Define $A_{h,\ga}: V_h\mapsto V_h$ by
\eq{(A_{h,\ga} v_h,w_h)=a_\ga(v_h,w_h)+(v_h,w_h),\quad\forall v_h,w_h\in V_h.\label{eAhg}}
Clearly, under the conditions of Lemma~\ref{lga0}, $A_{h,\ga}$ is symmetric and positive definite. Therefore we may define the powers of the operator $A_{h,\ga}$ by using its eigenvalues and eigenfunctions just like \eqref{eAhj}. And similarly to \eqref{enormh}, we 
introduce the following discrete $H^j$ norms on $V_h$ for any integer $j$:
\eq{\norm{v_h}_{j,h,\ga}=\norm{A_{h,\ga}^{j/2}v_h}_0.\label{enormhg}}
From the above definition, Lemma~\ref{lga0}, and Remark 6.1, it is clear that
\eq{\norm{v_h}_{0,h,\ga}=\norm{v_h}_0,\,\norm{v_h}_{1,h,\ga}=(A_{h,\ga}v_h,v_h)^{1/2}\eqsim\norm{v_h}_1\eqsim\norm{v_h}_{1,\ga}, \;\forall v_h\in V_h.\label{enormh2g}}

The following lemma parallel to Lemma~\ref{lAhinv} gives some inverse estimates for discrete functions. The proof is omitted.
\begin{lemma}\label{lAhinvg} For any integer $j$,
\eqn{\norm{v_h}_{j,h,\ga}\ls h^{-1}\norm{v_h}_{j-1,h,\ga},\quad\forall v_h\in V_h.}
\end{lemma}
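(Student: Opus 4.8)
The statement to prove is Lemma~\ref{lAhinvg}: for any integer $j$, $\norm{v_h}_{j,h,\ga}\ls h^{-1}\norm{v_h}_{j-1,h,\ga}$ for all $v_h\in V_h$. This is the CIP-FEM analogue of Lemma~\ref{lAhinv}, whose proof reduced everything to the case $j=1$ using the definition of the discrete norms via powers of the discrete operator.

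\textbf{Proof proposal.} The plan is to follow the proof of Lemma~\ref{lAhinv} verbatim, with $A_h$ replaced by $A_{h,\ga}$ and the discrete norms $\norm{\cdot}_{j,h}$ replaced by $\norm{\cdot}_{j,h,\ga}$. First I would observe that, because $A_{h,\ga}$ is symmetric positive definite (by Lemma~\ref{lga0} and \eqref{eAhg}), it has a complete orthogonal eigenbasis and the fractional powers $A_{h,\ga}^{j/2}$ are well defined as in \eqref{eAhj}; consequently $\norm{v_h}_{j,h,\ga}=\norm{A_{h,\ga}^{j/2}v_h}_0$ satisfies $\norm{A_{h,\ga}w_h}_{j-1,h,\ga}=\norm{w_h}_{j+1,h,\ga}$ and $\norm{A_{h,\ga}^{-1}w_h}_{j+1,h,\ga}=\norm{w_h}_{j-1,h,\ga}$ for every integer $j$. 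Since $A_{h,\ga}$ and $A_{h,\ga}^{-1}$ are bijections on $V_h$ commuting with all the fractional powers, the general estimate $\norm{v_h}_{j,h,\ga}\ls h^{-1}\norm{v_h}_{j-1,h,\ga}$ follows for all integer $j$ once it is established for the single case $j=1$, i.e. $\norm{v_h}_{1,h,\ga}\ls h^{-1}\norm{v_h}_{0,h,\ga}$. (One applies the $j=1$ bound to $A_{h,\ga}^{(j-1)/2}v_h$.)

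For the case $j=1$, I would use \eqref{enormh2g}, which identifies $\norm{v_h}_{1,h,\ga}\eqsim\norm{v_h}_1$ and $\norm{v_h}_{0,h,\ga}=\norm{v_h}_0$, together with the standard inverse inequality on $V_h$: since $h_K\eqsim h$ and the mesh is shape-regular (curvilinear, with reference maps $F_K$ as in Section~\ref{sec-2}), one has $\norm{v_h}_1\ls h^{-1}\norm{v_h}_0$. Chaining these gives
\[
\norm{v_h}_{1,h,\ga}\eqsim\norm{v_h}_1\ls h^{-1}\norm{v_h}_0=h^{-1}\norm{v_h}_{0,h,\ga},
\]
which is exactly the desired $j=1$ estimate. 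The argument is essentially identical to the proof of Lemma~\ref{lAhinv}; the only place where the penalty parameters enter is through \eqref{enormh2g}, which holds precisely because of the norm equivalence $\norm{v_h}_{1,\ga}\eqsim\norm{v_h}_1$ from Remark 6.1 and Lemma~\ref{lga0}.

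Since there is no genuine obstacle here, the "main obstacle" is really just the bookkeeping: making sure the reduction to $j=1$ is stated correctly for both positive and negative (and zero) integers $j$, and invoking the norm-equivalence \eqref{enormh2g} rather than re-deriving it. Given how close this is to Lemma~\ref{lAhinv}, I expect the paper to simply state "the proof is omitted" (as it in fact does), and a full write-up would consist of the two short displays above.
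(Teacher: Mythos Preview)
Your proposal is correct and follows essentially the same approach the paper would use: the paper omits the proof precisely because it is identical to that of Lemma~\ref{lAhinv}, namely reduce to the case $j=1$ via the definition \eqref{enormhg} and then invoke the norm equivalence \eqref{enormh2g} together with the standard inverse inequality. Your explicit justification of the reduction (applying the $j=1$ bound to $A_{h,\ga}^{(j-1)/2}v_h$) is exactly the mechanism implicit in the paper's one-line argument.
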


Similarly to Lemma~\ref{lAhnorm}, we have the following lemma which gives a relationship between the discrete $H^{-j}$ norm ($j\ge 0$) and the $H^{-j}$ norm of discrete functions. Since its proof is almost the same as  that of Lemma~\ref{lAhnorm}, we omit it to save space. 
\begin{lemma}\label{lAhnormg} For any integer $0\le j\le p+1$, we have
\eq{\norm{v_h}_{-j,h,\ga}&\ls \sum_{m=0}^{j}h^{j-m}\norm{v_h}_{-m}, \quad\forall v_h\in V_h.\label{enormh-jag}
}
\end{lemma}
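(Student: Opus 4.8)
\textbf{Proof proposal for Lemma~\ref{lAhnormg}.}

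The plan is to mirror the proof of Lemma~\ref{lAhnorm} essentially verbatim, replacing every occurrence of the FEM elliptic operator $A_h$, projection $P_h$, and discrete norms $\norm{\cdot}_{j,h}$ by their CIP counterparts $A_{h,\ga}$, $P_{h,\ga}$, and $\norm{\cdot}_{j,h,\ga}$, and tracking where the penalty bilinear form $J(\cdot,\cdot)$ enters. As in Lemma~\ref{lAhnorm}, by \eqref{enormAj-} and \eqref{enormh2g} it suffices to prove $\norm{v_h}_{-j,h,\ga}\ls \sum_{m=0}^{j}h^{j-m}\norm{v_h}_{-m*}$. I would set $z_h:=A_{h,\ga}^{-1}v_h\in V_h$ and $z:=A^{-1}v_h\in D(A)$, so that $a_\ga(z_h,w_h)+(z_h,w_h)=(v_h,w_h)$ for all $w_h\in V_h$ and $a(z,w)+(z,w)=(v_h,w)$ for all $w\in H^1(\Om)$; note $z_h=P_{h,\ga}z$ by \eqref{ePhg}. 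Lemma~\ref{lPhg} together with Lemma~\ref{lapprox1g} and \eqref{enormAj-} then gives, for $-1\le m\le p-1$,
\eqn{\norm{z-z_h}_{-m*}\ls\norm{z-z_h}_{-m}\ls h^{m+1}\inf_{\vp_h\in V_h}E_\ga(z,\vp_h)\ls h^{m+2}\norm{z}_2\ls h^{m+2}\norm{v_h}_0,}
which is the exact analogue of the key estimate \eqref{elnormh-ja}, namely $\norm{A_{h,\ga}^{-1}v_h}_{-m*}\ls\norm{v_h}_{-(m+2)*}+h^{m+2}\norm{v_h}_0$ for $-1\le m\le p-1$.

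For even $j=2l\le p+1$ I would then iterate this estimate exactly as in Lemma~\ref{lAhnorm}, writing $\norm{v_h}_{-j,h,\ga}=\norm{A_{h,\ga}^{-l}v_h}_0=\norm{A_{h,\ga}^{-1}(A_{h,\ga}^{-l+1}v_h)}_0$ and peeling off one factor of $A_{h,\ga}^{-1}$ at a time, producing the telescoping bound $\norm{v_h}_{-j*}+h^2\norm{v_h}_{-(j-2)*}+\cdots+h^j\norm{v_h}_0$. For odd $j=2l+1\le p+1$ the only nontrivial extra point is the half-power step: I would compute $\norm{A_{h,\ga}^{-1/2}v_h}_0^2=(v_h,A_{h,\ga}^{-1}v_h)=(A_{h,\ga}z_h,z_h)=a_\ga(z_h,z_h)+(z_h,z_h)=\norm{z_h}_{1,\ga}^2$, and here the CIP modification is benign: since $z_h=P_{h,\ga}z$, testing \eqref{ePhga} with $v_h=z_h$ gives $a(z_h,z_h)+(z_h,z_h)=a(z,z_h)+(z,z_h)-J(z_h,z_h)$, so $\norm{z_h}_1^2=a(z,z_h)+(z,z_h)-J(z_h,z_h)\le a(z,z_h)+(z,z_h)\le\norm{z}_1\norm{z_h}_1+\norm{z}_0\norm{z_h}_0$ because $J(z_h,z_h)\ge 0$ when $\ga_{j,e}\ge\ga_0$? — more carefully, one uses instead the coercivity $a_\ga(z_h,z_h)\eqsim\norm{\na z_h}_0^2$ from Lemma~\ref{lga0} and Remark~6.1 to get $\norm{z_h}_{1,\ga}\ls\norm{z_h}_1\ls\norm{z}_1=\norm{z}_{1*}=\norm{v_h}_{-1*}$, hence $\norm{A_{h,\ga}^{-1/2}v_h}_0\ls\norm{v_h}_{-1*}$. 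Then $\norm{v_h}_{-j,h,\ga}=\norm{A_{h,\ga}^{-1/2}(A_{h,\ga}^{-l}v_h)}_0\ls\norm{A_{h,\ga}^{-l}v_h}_{-1*}$ and a recursive use of \eqref{elnormh-ja}'s CIP analogue finishes the odd case, yielding \eqref{enormh-jag}.

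The main obstacle, such as it is, is the half-integer-power bound in the odd case: unlike the FEM situation where $\norm{z_h}_1=\norm{P_hz}_1\le\norm{z}_1$ follows immediately from the best-approximation property of $P_h$ in the $H^1$-equivalent norm, here $P_{h,\ga}$ is the best approximation only in the $\norm{\cdot}_{1,\ga}$-norm, so one must invoke the norm equivalences of Lemma~\ref{lga0} and Remark~6.1 (and the sign of the real penalty parameters, i.e.\ $\ga_0<0$ with $\ga_{j,e}\ge\ga_0$) to convert back to the standard $H^1$-norm with a $k$- and $h$-independent constant. Everything else — the local trace and inverse inequalities, the recursion structure, the reduction to $\norm{\cdot}_{-m*}$ — transfers without change, which is why the authors state that the proof "is almost the same" and omit it.
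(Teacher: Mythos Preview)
Your proposal is correct and follows exactly the approach the paper intends: replace $A_h$, $P_h$, $\norm{\cdot}_{j,h}$ by their CIP counterparts $A_{h,\ga}$, $P_{h,\ga}$, $\norm{\cdot}_{j,h,\ga}$, invoke Lemmas~\ref{lPhg} and \ref{lapprox1g} in place of Lemmas~\ref{lPh} and \ref{lapprox1} to obtain the CIP analogue of \eqref{elnormh-ja}, and handle the odd-$j$ half-power step via the coercivity of Lemma~\ref{lga0} (which gives $\norm{z_h}_1^2\ls a_\ga(z_h,z_h)+(z_h,z_h)=a(z,z_h)+(z,z_h)\le\norm{z}_1\norm{z_h}_1$, hence $\norm{A_{h,\ga}^{-1/2}v_h}_0\ls\norm{z}_1=\norm{v_h}_{-1*}$). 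One cosmetic remark: $a_\ga(z_h,z_h)+(z_h,z_h)=\norm{z_h}_{1,h,\ga}^2$, not $\norm{z_h}_{1,\ga}^2$ (the latter uses $|\ga_{j,e}|$), but by \eqref{enormh2g} the two are equivalent and your argument goes through unchanged.
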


\subsection{Preasymptotic error analysis}\label{sec-preasy-dual-g}
The following Theorem gives preasymptotic error estimates for the CIP-FEM. The proof is omitted since it is quite similar to that of Theorem~\ref{thm-err-1} except the norm  $\norm{\cdot}_{j,h}$ should be replaced by  $\norm{\cdot}_{j,h,\ga}$ and the errors in the norm $\norme{\cdot}$ should be replaced by the errors measured by the function $\mathbb{E}$ defined in \eqref{e2.6b}.
\begin{theorem}\label{thm-err-1g}
Let $u$ and $u_h$ be the solutions to \eqref{eq1.1a}-\eqref{eq1.1b} and \eqref{ecipfem}, respectively. Then there exist a constant $C_0$ independent of $k$ and $h$, such that if
\begin{equation}\label{econd1g}
k(kh)^{2p}\le C_0,
\end{equation}
then the following error estimates hold:
\begin{align}\label{error-eh1g}
\mathbb{E}_\ga(u,u_h)&\lesssim \big(1+k(kh)^p\big)\inf_{z_h \in V_h}\mathbb{E}_\ga(u,z_h),\\
\|u-u_h\|_0&\lesssim \big(h+(kh)^p\big)\inf_{z_h \in V_h}\mathbb{E}_\ga(u,z_h).\label{error-eh2g}
\end{align}
\end{theorem}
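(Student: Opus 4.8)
\textbf{Proof plan for Theorem~\ref{thm-err-1g}.} The plan is to mimic the four-step argument used for Theorem~\ref{thm-err-1}, with the substitutions indicated in the paper: replace $P_h$ by $P_{h,\ga}$, replace $A_h$ by $A_{h,\ga}$, replace every $\norm{\cdot}_{j,h}$ by $\norm{\cdot}_{j,h,\ga}$, replace $\norme{\cdot}$ by $\mathbb{E}_\ga(\cdot,\cdot)$, and keep careful track of the penalty bilinear form $J(\cdot,\cdot)$ which appears whenever one manipulates $a_\ga$. First I would set $e_h:=u-u_h=(u-P_{h,\ga}u)+(P_{h,\ga}u-u_h)=:\rho+\ta_h$ and record, from Lemma~\ref{lPhg} and Lemma~\ref{lga0} together with Lemma~\ref{error2g}, the analogue of \eqref{erho}: $\norm{\rho}_{-j}\ls h^{j+1}\inf_{z_h\in V_h}\mathbb{E}_\ga(u,z_h)$ for $0\le j\le p-1$, and also $E_\ga(u,P_{h,\ga}u)\ls\inf_{z_h}\mathbb{E}_\ga(u,z_h)$. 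The Galerkin orthogonality now reads $a_\ga(e_h,v_h)-k^2(e_h,v_h)+\i k\pd{e_h,v_h}=0$; using the definition \eqref{ePhg} of $P_{h,\ga}$ this becomes $a_\ga(\ta_h,v_h)-k^2(\ta_h,v_h)+\i k\pd{\ta_h,v_h}=(k^2+1)(\rho,v_h)-\i k\pd{\rho,v_h}$, which is formally identical to \eqref{ecipfemorth} with $a$ replaced by $a_\ga$. Crucially $a_\ga(v_h,v_h)$ is real (the penalty parameters are assumed real here), so taking real/imaginary parts works exactly as before, and by Lemma~\ref{lga0} its real part is equivalent to $\norm{\na v_h}_0^2$, hence controls $\abs{\ta_h}_{1,\ga}^2$ up to constants.

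The remaining steps then transcribe as follows. In Step~1, choosing $v_h=\ta_h$ and taking imaginary parts kills $a_\ga(\ta_h,\ta_h)$ (real) and yields the trace bound $\norml{\ta_h}{\Ga}^2\ls k^2h^{2p-1}\norm{\ta_h}_{p-1,h,\ga}^2+h\,\mathbb{E}_\ga(u,\cdot)^2$, using Lemma~\ref{lAhnormg} and Lemma~\ref{lQh} (the $L^2$-projection estimates are unchanged) to bound $\norm{Q_h\rho}_{1-p,h,\ga}\ls h^p E_\ga(u,P_{h,\ga}u)$; note one needs $\norm{\cdot}_{1-p,h,\ga}\ls\sum_m h^{\cdots}\norm{\cdot}_{-m}$ from Lemma~\ref{lAhnormg} here. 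In Step~2, rewriting the orthogonality via $A_{h,\ga}$ as $(A_{h,\ga}\ta_h,v_h)=(k^2+1)(\ta_h,v_h)+(k^2+1)(Q_h\rho,v_h)-\i k\pd{\ta_h,v_h}-\i k\pd{\rho,v_h}$ and testing with $v_h=A_{h,\ga}^{m-1}\ta_h$, the inverse inequality (Lemma~\ref{lAhinvg}) and the trace inequality give, exactly as before, $\norm{\ta_h}_{m,h,\ga}\ls k\norm{\ta_h}_{m-1,h,\ga}+h^{1-m}\mathbb{E}_\ga(u,\cdot)$ for $1\le m\le p$, hence by recursion $\norm{\ta_h}_{m,h,\ga}\ls k^m\norm{\ta_h}_0+h^{1-m}\mathbb{E}_\ga(u,\cdot)$. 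Step~3 is the duality step: for the same dual problem \eqref{auxii1} with solution $w\in H^{p+1}$, test the conjugated equation by $e_h=\rho+\ta_h$, insert $v_h=P_{h,\ga}w$ via Galerkin orthogonality and \eqref{ePhg}; here the extra care is that $a_\ga(e_h,w-P_{h,\ga}w)$ generates a term $J(u_h,w-P_{h,\ga}w)$ — but since $w\in H^{p+1}(\Om)$ one has $J(u,v)=0$ for $v\in V$ as noted after \eqref{eJ}, and more to the point the jump terms telescope into the $E_\ga$-measured errors of $\rho$ and of $P_{h,\ga}w$, both controlled by Lemmas~\ref{lPhg}, \ref{error2g}, \ref{lapprox1g}; the Cauchy–Schwarz in the $E_\ga(\cdot,\cdot)$ pairing and $\mathbb{E}_\ga(w,P_{h,\ga}w)\ls(h+(kh)^p)\norm{\ta_h}_0$ then give $\norm{\ta_h}_0\ls(h+(kh)^p)\mathbb{E}_\ga(u,\cdot)+k^2h^p(h+(kh)^p)\norm{\ta_h}_{p-1,h,\ga}$. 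Step~4 combines this with the Step~2 recursion at $m=p-1$: $\norm{\ta_h}_0\ls(h+(kh)^p)\mathbb{E}_\ga(u,\cdot)+((kh)^{p+1}+k(kh)^{2p})\norm{\ta_h}_0$, so the smallness condition \eqref{econd1g} absorbs the $\ta_h$ term, yielding $\norm{\ta_h}_0\ls(h+(kh)^p)\mathbb{E}_\ga(u,\cdot)$ and then, from Step~2 at $m=1$ together with \eqref{enormh2g}, $\abs{\ta_h}_{1,\ga}\ls k\norm{\ta_h}_0+\mathbb{E}_\ga(u,\cdot)\ls(1+k(kh)^p)\mathbb{E}_\ga(u,\cdot)$; adding the $\rho$ estimate and \eqref{elPhga} finishes \eqref{error-eh1g}–\eqref{error-eh2g}.

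I expect the main obstacle to be purely bookkeeping rather than conceptual: one must verify that every place where the FEM proof used $a(\cdot,\cdot)$-orthogonality or the coercivity $a(v_h,v_h)=\norm{\na v_h}_0^2$ still goes through after replacing $a$ by $a_\ga$ — in particular that the residual terms $J(P_{h,\ga}\vp,v_h)$ and $J(u_h,w-P_{h,\ga}w)$ produced by the penalty form are exactly of the size already accounted for by the $E_\ga$/$\mathbb{E}_\ga$ error functionals, so that no new terms of unfavorable order in $k$ appear. The assumption that the $\ga_{j,e}$ are \emph{real} is what keeps $a_\ga(v_h,v_h)$ real and makes the imaginary-part trick in Steps~1 and~3 valid verbatim; the assumption $\ga_0\le\ga_{j,e}\ls 1$ (with $\ga_0$ from Lemma~\ref{lga0}) is what gives the norm equivalences in \eqref{enormh2g} and Remark~6.1 needed to pass between $\norm{\cdot}_{1,h,\ga}$, $\norm{\cdot}_{1,\ga}$ and $\norm{\cdot}_1$. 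Once these equivalences and the vanishing/smallness of the $J$-terms are checked, the four steps are line-by-line the same as in Theorem~\ref{thm-err-1}, which is precisely why the paper omits the details.
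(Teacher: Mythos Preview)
Your proposal is correct and follows exactly the approach the paper indicates: the paper omits the proof entirely, stating only that it is ``quite similar to that of Theorem~\ref{thm-err-1} except the norm $\norm{\cdot}_{j,h}$ should be replaced by $\norm{\cdot}_{j,h,\ga}$ and the errors in the norm $\norme{\cdot}$ should be replaced by the errors measured by the function $\mathbb{E}_\ga$,'' and your four-step transcription with $P_{h,\ga}$, $A_{h,\ga}$, and the handling of the residual $J$-terms via the $E_\ga$ functionals carries this out faithfully. Your identification of the two places where extra bookkeeping is needed --- the reality of $a_\ga(v_h,v_h)$ for the imaginary-part trick and the absorption of the penalty cross-terms in Step~3 into $E_\ga(u,P_{h,\ga}u)\,E_\ga(w,P_{h,\ga}w)$ --- is exactly right.
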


From Theorem~\ref{thm-err-1g} and Lemmas~\ref{lstau} and \ref{lapprox1g}, we have the following corollary which gives preasymptotic estimates for $H^{p+1}$ regular solutions.
\begin{corollary}\label{cor-1g}
Let $u$ and $u_h$ be the solutions to \eqref{eq1.1a}-\eqref{eq1.1b} and \eqref{ecipfem}, respectively. Suppose  $C_{p-1,f,g}\ls 1$.  Then there exist constants $C_0, C_1, C_2$ independent of $k$ and $h$, such that if $k(kh)^{2p}\le C_0$ then the following estimates hold:
\begin{align}\label{ecor-1g-a}
\norm{u-u_h}_1&\le C_1(kh)^p+C_2k(kh)^{2p},\\
k\|u-u_h\|_0&\le C_1(kh)^{p+1}+C_2k(kh)^{2p}.\label{ecor-1g-b}
\end{align}
\end{corollary}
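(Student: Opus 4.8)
\textbf{Proof proposal for Corollary~\ref{cor-1g}.}

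The plan is to combine the abstract error estimates of Theorem~\ref{thm-err-1g} with the regularity bound of Lemma~\ref{lstau} and the approximation estimate of Lemma~\ref{lapprox1g}, exactly as one derives Corollary~\ref{cor-1} from Theorem~\ref{thm-err-1}. First I would apply Lemma~\ref{lstau} with $s=p+1$: since $f\in H^{p-1}(\Om)$ and $g\in H^{p-1/2}(\Ga)$ with $C_{p-1,f,g}\ls 1$, we get $\norm{u}_{p+1}\ls k^{p}$. Next I would choose the particular interpolant $\hat u_h\in V_h$ provided by Lemma~\ref{lapprox1g} (with $s=p+1$), which satisfies
\[
\norm{u-\hat u_h}_0+hE_\ga(u,\hat u_h)\ls h^{p+1}\abs{u}_{p+1}.
\]
Recalling the definition $\mathbb{E}_\ga(u,\hat u_h)=\big(E_\ga(u,\hat u_h)^2+k^2\norm{u-\hat u_h}_0^2\big)^{1/2}$, the two pieces above yield
\[
\inf_{z_h\in V_h}\mathbb{E}_\ga(u,z_h)\le \mathbb{E}_\ga(u,\hat u_h)\ls h^{p}\abs{u}_{p+1}+kh^{p+1}\abs{u}_{p+1}\ls (1+kh)(kh)^p\ls (kh)^p,
\]
using $\abs{u}_{p+1}\le\norm{u}_{p+1}\ls k^p$ and $kh\ls 1$ (which is implied by the mesh condition $k(kh)^{2p}\le C_0$).

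Then I would feed this bound into Theorem~\ref{thm-err-1g}. From \eqref{error-eh1g},
\[
\mathbb{E}_\ga(u,u_h)\ls\big(1+k(kh)^p\big)(kh)^p\ls (kh)^p+k(kh)^{2p},
\]
and since $\norm{u-u_h}_1\le\norm{u-u_h}_{1,\ga}=\mathbb{E}_\ga(u,u_h)$ when restricted to the relevant norm comparison (more precisely $\abs{u-u_h}_1\le E_\ga(u,u_h)\le\mathbb{E}_\ga(u,u_h)$, and $\norm{u-u_h}_0\le k^{-1}\mathbb{E}_\ga(u,u_h)$ is already controlled separately), this gives \eqref{ecor-1g-a} with appropriate constants $C_1,C_2$. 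For \eqref{ecor-1g-b} I would instead use \eqref{error-eh2g}:
\[
k\norm{u-u_h}_0\ls k\big(h+(kh)^p\big)(kh)^p\ls (kh)^{p+1}+k(kh)^{2p},
\]
where the first term uses $kh\cdot(kh)^p=(kh)^{p+1}$. Collecting the two displays produces the claimed estimates with constants $C_0,C_1,C_2$ independent of $k$ and $h$.

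The argument is essentially bookkeeping once Theorem~\ref{thm-err-1g} is in hand, so there is no serious obstacle; the only point requiring a little care is tracking the powers of $k$ and $h$ through the definitions of $\mathbb{E}_\ga$ and $E_\ga$ and verifying that $C_{p-1,f,g}\ls 1$ indeed forces $\norm{u}_{p+1}\ls k^p$ via Lemma~\ref{lstau} (one must check that the hypotheses $f\in H^{p-1}(\Om)$, $g\in H^{p-1/2}(\Ga)$ needed there are subsumed in the assumption $C_{p-1,f,g}\ls 1$). One should also note explicitly that the mesh condition $k(kh)^{2p}\le C_0$ implies $kh\le C_0^{1/(2p+1)}\ls 1$, which is used repeatedly to absorb lower-order terms.
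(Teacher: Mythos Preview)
Your proposal is correct and follows exactly the route the paper indicates: combine Theorem~\ref{thm-err-1g} with the regularity estimate of Lemma~\ref{lstau} (at $s=p+1$) and the approximation bound of Lemma~\ref{lapprox1g} to control $\inf_{z_h}\mathbb{E}_\ga(u,z_h)$ by $(kh)^p$, then read off the two estimates. The only cosmetic slip is the equality $\norm{u-u_h}_{1,\ga}=\mathbb{E}_\ga(u,u_h)$, which is not literally true (the latter carries the weight $k^2$ on the $L^2$ part), but your ``more precisely'' parenthetical already supplies the correct inequality $\norm{u-u_h}_1\ls\mathbb{E}_\ga(u,u_h)$ using $k\gg 1$.
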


\rem 6.1.\
{\rm (a)} We have proven that the new CIP-FEM with real penalty parameters satisfies the same preasymptotic error estimates as those of FEM (cf. Theorem~\ref{thm-err-1} and Corollary~\ref{cor-1}). The mesh condition $k(kh)^{2p}\le C_0$ improves the previous results in \cite{w,zw} which require that $k(kh)^{p+1}\le C_0$ (for fixed $p>1$.) For preasymptotic analysis of the CIP-FEM with complex penalty parameters, we refer to \cite{w,zw}.

{\rm (b)} In the next section, we will show, via dispersion analysis and numerical examples, that the pollution error of the CIP-FEM may be reduced greatly by tuning the penalty parameters.

By combining Lemmas~\ref{depcomposition}, \ref{error2g} and Theorem~\ref{thm-err-1g} we have the following stability estimates for the CIP-FEM. The proof is similar to that of Corollary~\ref{cor-3} and is omitted.
\begin{corollary}\label{cor-3g}
Suppose the solution $u\in H^2(\Om)$. Under the conditions of Theorem~\ref{thm-err-1g}, there holds the following estimate:
\begin{align*}
\norm{\na u_h}_0+k\norm{u_h}_0&\lesssim C_{f,g},
\end{align*}
and hence the CIP-FEM is well-posed.
\end{corollary}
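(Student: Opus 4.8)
\textbf{Proof proposal for Corollary~\ref{cor-3g}.}

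The plan is to mirror exactly the argument used for Corollary~\ref{cor-3}, simply replacing the energy norm $\norme{\cdot}$ by the CIP energy quantity $\mathbb{E}_\ga(\cdot,\cdot)$ and invoking the CIP analogues of the ingredients. First I would write $u_h = u - (u - u_h)$, so that by the triangle inequality
\eqn{\norme{u_h}_\ga \le \norme{u}_\ga + \mathbb{E}_\ga(u,u_h).}
Here the first term makes sense because $u\in H^2(\Om)\subset V$ (indeed $u\in H^{p+1}(\Om)$ by the analyticity of $\Ga$ and Lemma~\ref{lstau}, or we may just use $u\in H^2$ with the penalty terms acting only through jumps that vanish on the continuous function $u$), and moreover $\norme{u}_\ga = \norme{u}$ since $J(u,u)=0$ for $u\in H^{p+1}(\Om)$; in any case $\norme{u}_\ga \le \norme{u} \ls C_{f,g}$ by Lemma~\ref{depcomposition}.

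Next I would estimate $\mathbb{E}_\ga(u,u_h)$ using Theorem~\ref{thm-err-1g}: under the mesh condition $k(kh)^{2p}\le C_0$,
\eqn{\mathbb{E}_\ga(u,u_h) \ls \big(1+k(kh)^p\big)\inf_{z_h\in V_h}\mathbb{E}_\ga(u,z_h).}
To bound the best-approximation term I would use Lemma~\ref{error2g}, which provides $\hat u_h\in V_h$ with $\mathbb{E}_\ga(u,\hat u_h)\ls \big(h+(kh)^p\big)C_{f,g}$. Combining these,
\eqn{\mathbb{E}_\ga(u,u_h) \ls \big(1+k(kh)^p\big)\big(h+(kh)^p\big)C_{f,g} \ls \Big(h+(kh)^p+k(kh)^{p}h+k(kh)^{2p}\Big)C_{f,g} \ls C_{f,g},}
where the last step uses $kh\ls 1$ (implied by $k(kh)^{2p}\le C_0$) so that each of the four terms is bounded by a constant. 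Therefore $\norme{u_h}_\ga \ls C_{f,g}$, and since $\norm{\na u_h}_0 + k\norm{u_h}_0 \ls \norme{u_h}_\ga$ by definition \eqref{e2.5c}, the claimed bound follows.

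Finally, well-posedness: the CIP-FEM \eqref{ecipfem} is a square linear system in the finite-dimensional space $V_h$, so existence for every data is equivalent to uniqueness. If $f=g=0$, then $u\equiv 0$ is the exact solution, it lies in $H^2(\Om)$, and the estimate just proved gives $\norm{\na u_h}_0 + k\norm{u_h}_0 \ls C_{0,0}=0$, hence $u_h\equiv 0$; thus the system is nonsingular and the CIP-FEM is well-posed. I do not anticipate a genuine obstacle here, since every ingredient is already in place; the only point requiring a line of care is justifying that $u$ belongs to the energy space $V$ (or that $\norme{u}_\ga$ reduces to $\norme{u}$) so that $\mathbb{E}_\ga(u,u_h)$ and the triangle inequality in $\norme{\cdot}_\ga$ are meaningful — this is handled by the elliptic-regularity bound $\norm{u}_2\ls kC_{f,g}$ from Remark~3.1 together with $J(u,\cdot)=0$ on smooth $u$.
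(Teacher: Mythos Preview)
Your proposal is correct and follows exactly the route the paper intends (it explicitly says the proof ``is similar to that of Corollary~\ref{cor-3} and is omitted''): bound $\norme{u}$ by Lemma~\ref{depcomposition}, bound the discrete error via Theorem~\ref{thm-err-1g} and Lemma~\ref{error2g}, combine, and deduce well-posedness from the finite-dimensional uniqueness argument.

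One small technical slip worth cleaning up: the inclusion $H^2(\Om)\subset V$ is false for $p\ge 2$, since $V=H^1(\Om)\cap\prod_K H^{p+1}(K)$ requires local $H^{p+1}$ regularity. Your fallback via Lemma~\ref{lstau} works but tacitly assumes extra smoothness on $f,g$. The cleanest fix, and the one closest to the FEM proof, is to avoid $\norme{u}_\ga$ altogether: since $\norm{\na(u-u_h)}_0+k\norm{u-u_h}_0\le\norme{u-u_h}\le\mathbb{E}_\ga(u,u_h)$ (the latter just adds nonnegative penalty terms and uses the full $H^1$-norm), the plain triangle inequality gives
\[
\norm{\na u_h}_0+k\norm{u_h}_0\ \ls\ \norme{u}+\mathbb{E}_\ga(u,u_h),
\]
which needs only $u\in H^1(\Om)$ and then proceeds exactly as you wrote.
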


\section{Numerical examples}\label{sec-num}
In this section, we simulate the following two dimensional Helmholtz problem by FEM and CIP-FEM with $p=1, 2, 3$ on Cartesian meshes.
\begin{align}
-\De u - k^2 u &=f:=\frac{\sin(kr)}{r}  \qquad\mbox{in  } \Om, \label{num-eq-1}\\
\frac{\pa u}{\pa n} +\i k u& =g \quad\qquad\qquad\qquad\mbox{on } \Ga. \label{num-eq-2}
\end{align}
Here the computational domain $\Om$ is the unit square $(0,1)\times(0,1)$ and $g$ is so chosen that the exact solution is
\eq{u=\frac{\cos(k r)}{r}-\frac{\cos k+\i\sin k}{k\big(J_0(k)+\i J_1(k)\big)}J_0(k r)}
in polar coordinates, where $J_{\nu}(z)$ are Bessel functions of the first kind. We remark that this problem has been computed in \cite{fw09,w} by the linear FEM, CIP-FEM, and IPDG method on triangular meshes.


For any positive integer m, let $\T_{1/m}$ be the Cartesian grid that consists of $m^2$ congruent small squares of size $h = 1/m$. 
We remark that the number of total DOFs of both the FEM and CIP-FEM on $\T_{1/m}$ is $(pm)^2$.

Denote by $t:=kh$. For the CIP-FEM with $p=1, 2, 3$, we use the following penalty parameters which are obtained by a dispersion analysis for one dimensional problems such that the phase errors are entirely eliminated.

\noindent For p=1, let
\eq{\ga_{1,e}\equiv \frac{t^2(\cos t+2)+6\cos t-6}{12(1-\cos t)^2}; \label{opt-ga-p1}}
For p=2, let
\begin{align}\label{opt-ga-p2}
\ga_{1,e}\equiv& \frac{t^2\big(2\cos\frac{t}{2}+1\big)+12\cos^2\frac{t}{2}-12}{768\big(\sin^6\frac{t}{4}-\sin^8\frac{t}{4}\big)},\\
\ga_{2,e}\equiv
& \frac{\nfrac{t^2\big(8\sin^6\frac{t}{4}+12\sin^4\frac{t}{4}-30\sin^2\frac{t}{4}+15\big)}{
\quad-160\sin^6\frac{t}{4}+400\sin^4\frac{t}{4}-240\sin^2\frac{t}{4}}}{61440\big(\sin^{10}\frac{t}{4}-2\sin^8\frac{t}{4}+\sin^6\frac{t}{4}\big)};\nn
\end{align}
For p=3, let
\begin{align}\label{opt-ga-p3}
\ga_{1,e}\equiv& \frac{2t^2\big(36\cos\frac{t}{3} + 9\cos\frac{2t}{3} + 2\cos t + 13\big)+240(\cos t-1)}{480\big(2\cos\frac{t}{3} + 1\big)^2\big(4\cos\frac{t}{3} -1\big) \big(\cos\frac{t}{3} - 1\big)^3}\\
\ga_{2,e}\equiv&\frac{2t^2(\cos\frac{t}{3} + 28\cos\frac{2t}{3} + \cos\frac{4t}{3} - \cos t + 31)-120\sin^2\frac{t}{3}(2\cos\frac{t}{3} + 1)^2}{34560(2\cos\frac{t}{3} + 1)^3(\cos\frac{t}{3}-1)^4}\nn\\
\ga_{3,e}\equiv 
& \frac{\nfrac{36t^2\big(\cos(2t)+201\cos\frac{t}{3} + 93\cos\frac{2t}{3} + 24\cos\frac{4t}{3} - 3\cos\frac{5t}{3}+ 38\cos t }{\quad   +66\big)+504(\cos t-1)\big(36\cos\frac{t}{3}+9\cos\frac{2t}{3}+2\cos t+13\big)}}{6531840(\cos\frac{t}{3} - 1)^4(2\cos\frac{t}{3} + 1)^5}.\nn
\end{align}
  We remark that the penalty parameters in \eqref{opt-ga-p1} for $p=1$ was first given in \cite{zbw}. 
Although these parameters are derived for one dimensional problems, we use them in our computations for the two dimensional problem since we are using Cartesian grids.   
A detailed dispersion analysis for the CIP-FEM in both one and two dimensions will be reported in another work.

  From Theorem \ref{thm-err-1} (cf. Remark 4.1) and Theorem \ref{thm-err-1g}, the error of the FE  or CIP-FE solution $u_h$ in the $H^1$-seminorm is bounded by
\eq{\norm{\na (u-u_h)}_{L^2(\Om)}&\le C_1(kh)^p+C_2k(kh)^{2p},\label{num-err-1}}
for some constants $C_1$ and $C_2$ if $k(kh)^{2p}\leq C_0$. The second term on the right hand side of \eqref{num-err-1} is the so-called pollution error. We now present numerical results to verify the above error bounds.

\begin{figure}[htbp]
\begin{center}
\includegraphics[width=0.49\textwidth]{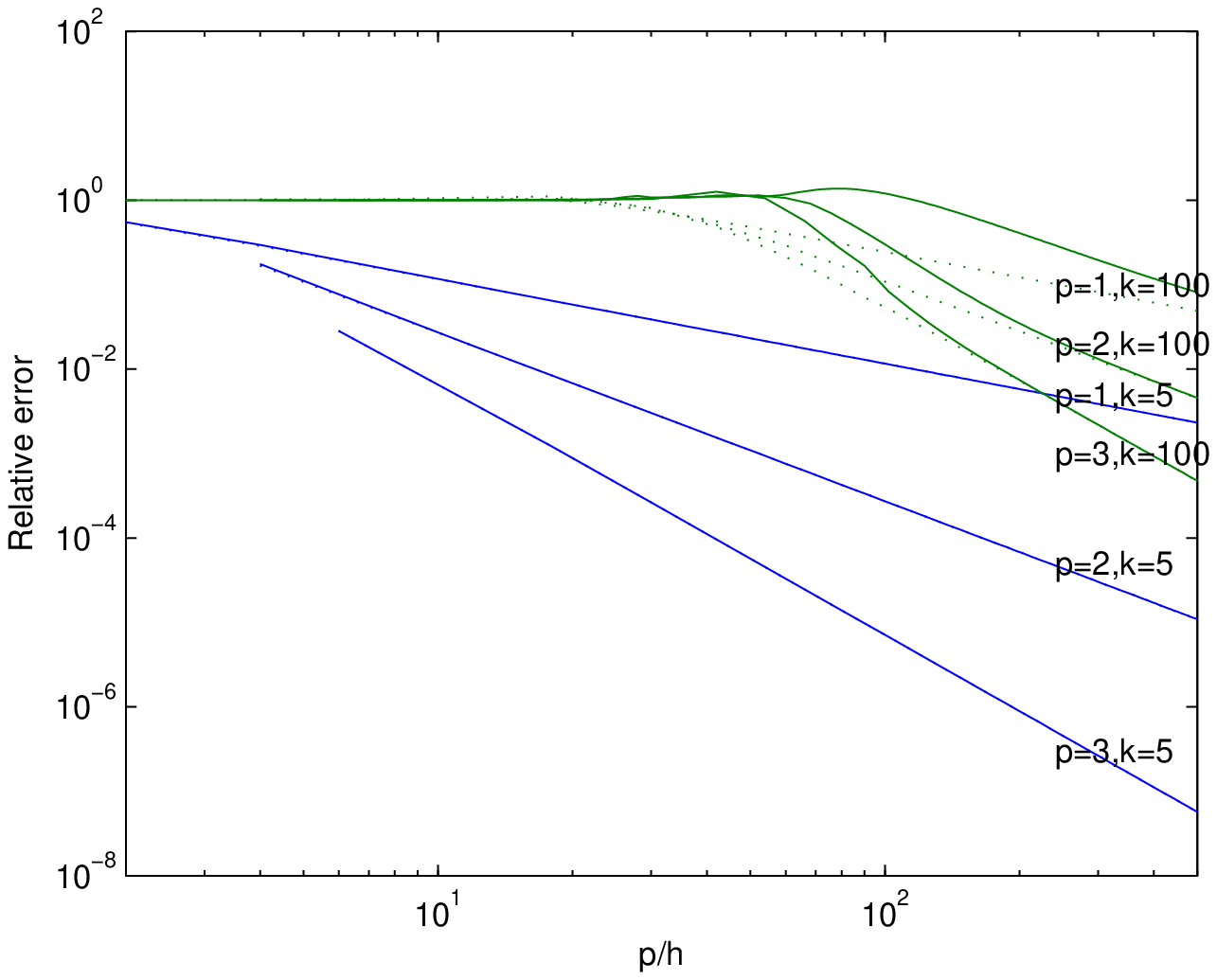}
\includegraphics[width=0.49\textwidth]{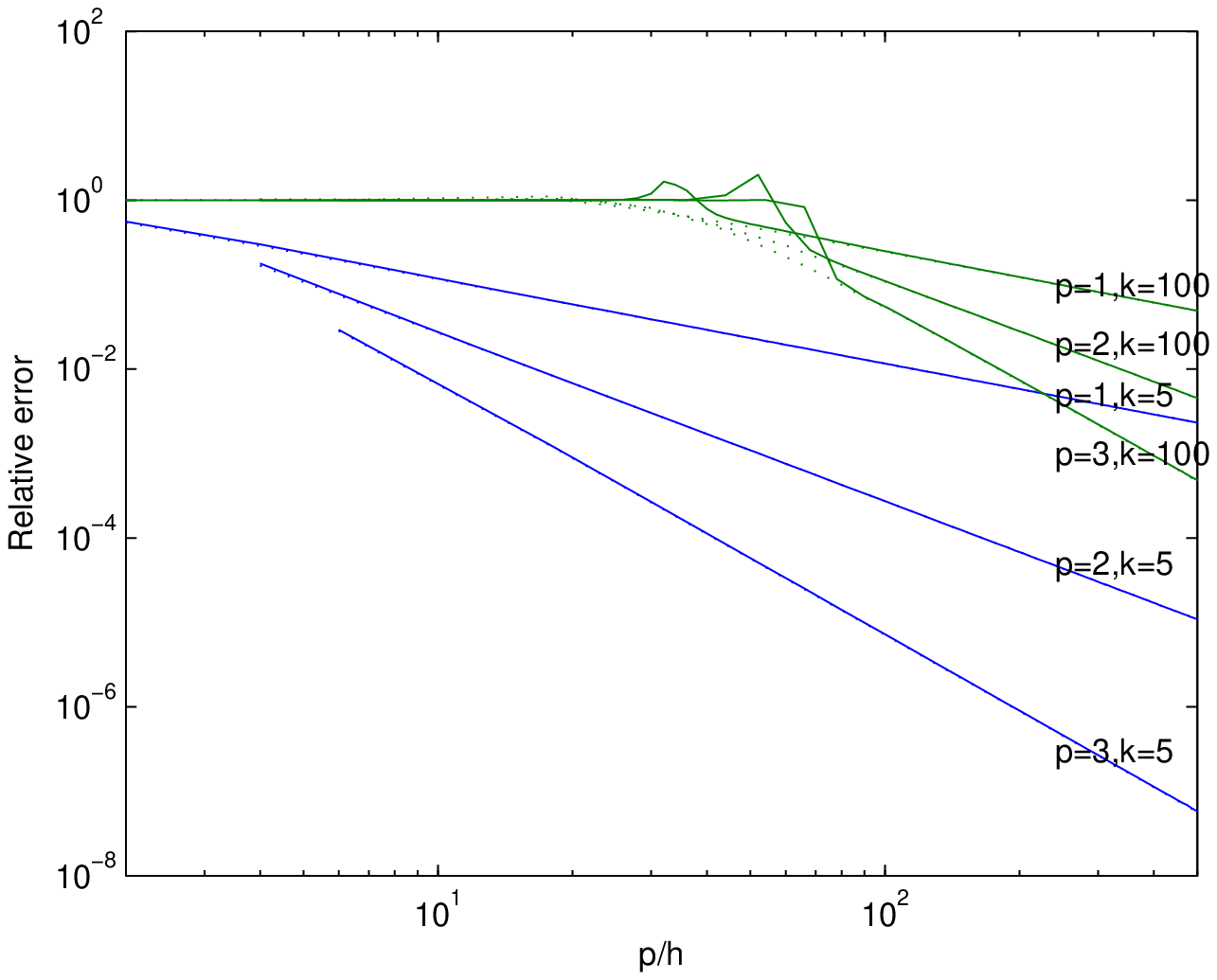}
\caption{Left graph: the relative error of the FE solution and the relative
error of the FE interpolation (dotted) in $H^1$-seminorm for $k = 5, 100$ and $p=1, 2, 3$, respectively. Right graph:
corresponding plots for CIP-FE solutions with parameters given by \eqref{opt-ga-p1}--\eqref{opt-ga-p3}.}
\label{fig:1}
\end{center}
\end{figure}

Figure~\ref{fig:1} plots the relative errors in $H^1$-seminorm of the FE solutions, the CIP-FE solutions with penalty parameters given by \eqref{opt-ga-p1}--\eqref{opt-ga-p3}, and the FE interpolations for $p=1,2,$ and $3$, respectively. It is shown that for $k=5$ the relative errors of both FE solutions and CIP-FE solutions fit those of the corresponding FE interpolations very well, which means the pollution errors do not come out for small $k$. For $k=100$, the relative errors of the FE solutions first stay around $100\%$, then decay slowly on a range starting with a point far from the decaying point of the corresponding FE interpolations, and then decays at a rate greater than $-p$ in the log-log scale but converges as fast as the FE interpolations (with slope $-p$) for small h. 
Such a behavior show clearly the effect of pollution of the FEM for large $k$ and $h$. The CIP-FE solutions behave similarly as the FE solutions but the pollution range of the former for each $p$ is much smaller than  that of the later, which means that the pollution effect is greatly reduced. To see this more intuitively we plot the relative errors of both methods for $p=1, 2, 3$ and $k=1, 2, \cdots, 1000$ with fixed $kh/p=1$ in one figure (see Figure~\ref{fig:2}). One can see that the pollution error of the FEM (for $p=1, 2,$ or $3$) becomes dominated when $k$ is greater than some value less than $50$, while the pollution error of the CIP-FEM (for $p=1, 2,$ or $3$) is almost invisible for $k$ up to $1000$. If we take a very close look at the relative error curve of the linear CIP-FEM ($p=1$), we may find that it increases very slowly, which means that the pollution effect is still there but very small.

\begin{figure}[htbp]
\begin{center}
\includegraphics[width=0.49\textwidth]{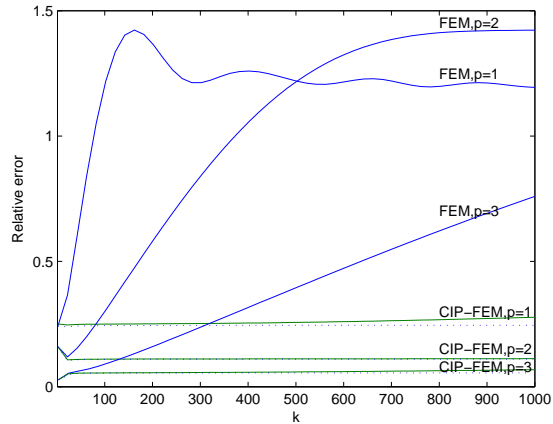}
\caption{The relative errors of the FE solutions, the CIP-FE solutions with parameters given by \eqref{opt-ga-p1}--\eqref{opt-ga-p3}, and the FE interpolations (dotted), with mesh size $h$ determined by $kh/p = 1$ for $p=1, 2, 3$, respectively.}
\label{fig:2}
\end{center}
\end{figure}

Next we verify more precisely the pollution term in \eqref{num-err-1}. To do so, we introduce the definition of the critical mesh size with respect to a given relative tolerance \cite{w}.

\begin{definition}
Given a relative tolerance $\ep$, a wave number $k$ and the polynomials' degree $p$, the critical mesh size $h(k,p,\ep)$ with respect to the relative tolerance $\ep$ is defined by the maximum mesh size such that the relative errors of the CIP-FE solution (or the FE solution) in $H^1$-seminorm is less than or equal to $\ep$.
\end{definition}

\begin{figure}[htbp]
\begin{center}
\includegraphics[width=0.49\textwidth]{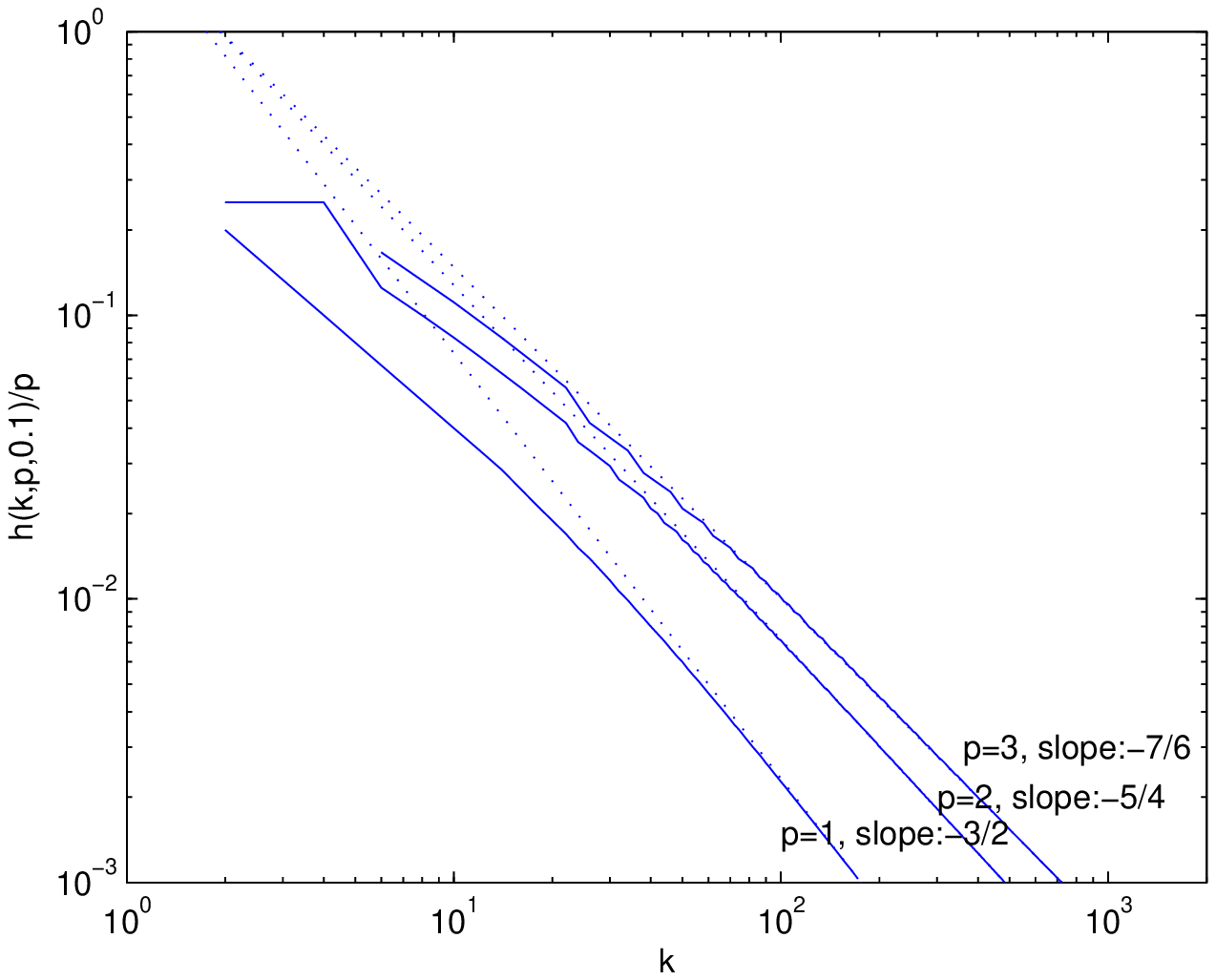}
\includegraphics[width=0.49\textwidth]{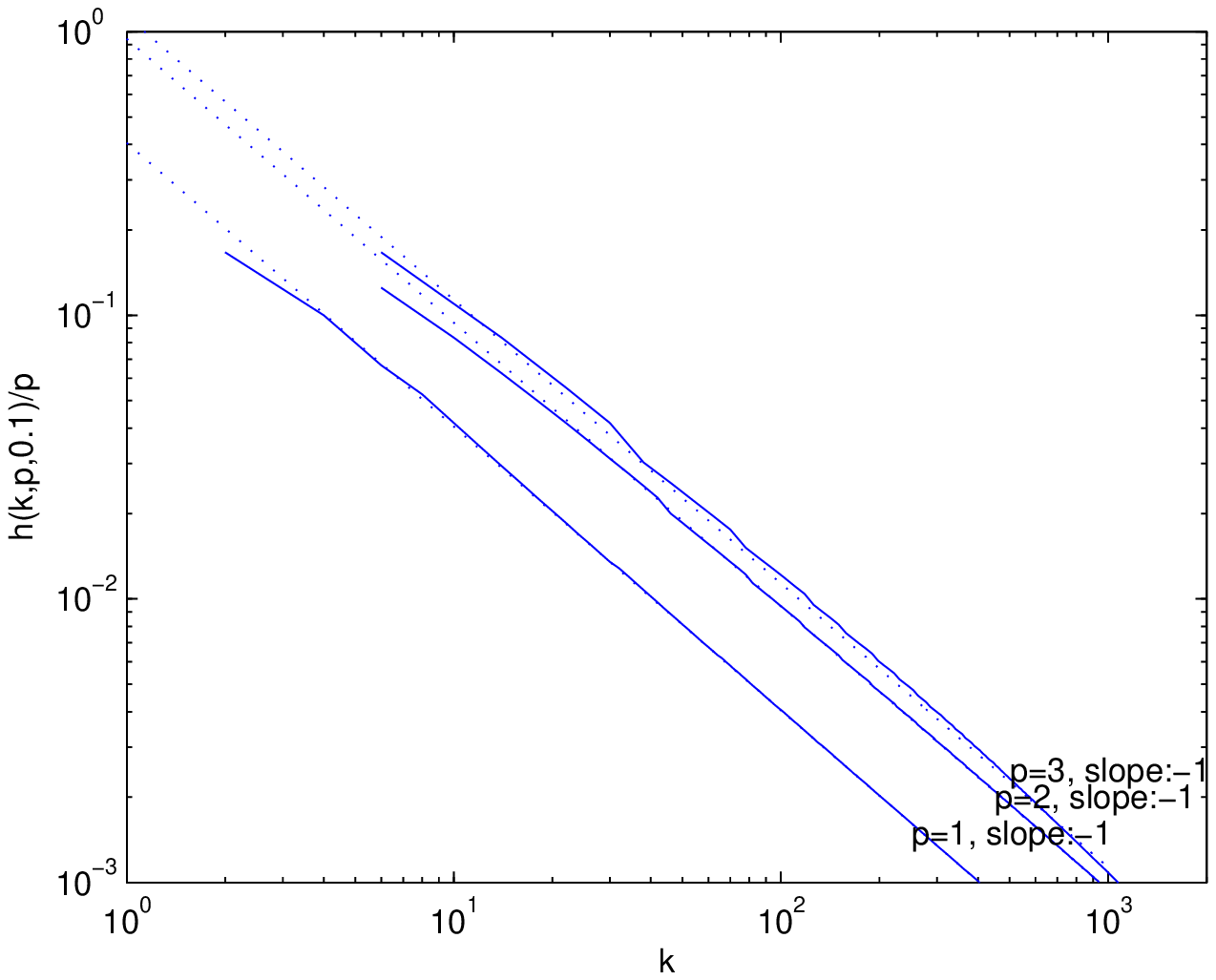}
\caption{The critical mesh size $h(k,p,0.1)$, $p=1,2,3$, versus k for the FEM (left) and the CIP-FEM (right) with parameters given by \eqref{opt-ga-p1}--\eqref{opt-ga-p3}, respectively. The dotted lines indicate reference slopes. }
\label{fig:3}
\end{center}
\end{figure}

It is clear that, if the pollution term of the FE solution in \eqref{num-err-1} is of order $k(kh)^{2p}$, then $h(k,p,\ep)$ should be proportional to $k^{-\frac{2p+1}{2p}}$ for $k$ large enough. This is verified by the left graphs of Fig \ref{fig:3} which plots $h(k,p,0.1),\ p=1,2,3$, the critical mesh size with respect to the relative tolerance $10\%$, versus $k$ for the FE solutions. The right graph of Fig \ref{fig:3} shows that $h(k,p,0.1)$ for the the CIP-FE solution is proportional to $k^{-1}$ which means the pollution effect does not show up yet in the computations.



\end{document}